\newtheorem{theorem}{Theorem}
\newtheorem{proposition}[theorem]{Proposition}
\newtheorem{lemma}[theorem]{Lemma}
\newtheorem{corollary}[theorem]{Corollary}
\theoremstyle{definition} 
\newtheorem{definition}[theorem]{Definition}
\newtheorem{example}[theorem]{Example}
\newtheorem{examples}[theorem]{Examples}
\newcommand{\spe}[1]{\mathbf{#1}}
\DeclareMathOperator{\Aut}{Aut}
\DeclareMathOperator{\Fix}{Fix}
\DeclareMathOperator{\id}{id}
\DeclareMathOperator{\Hilb}{F}
\DeclareMathOperator{\ps}{ps}
\DeclareMathOperator{\dcone}{dcone}
\DeclareMathOperator{\bdot}{\mathbf{\cdot}}
\DeclareMathOperator{\sgn}{sgn}
\DeclareMathOperator{\qsym}{QSYM}
\newcommand{\chifullevaluate}[6]{\Psi_{#1, #2}(#3,#4,#5;#6) }
\newcommand{\chiexevaluate}[2]{\chifullevaluate{\spe{#1}}{\varphi}{\spe{#2}}{\mathfrak{G}}{\mathbf{x}}{\mathfrak{g}}}
\newcommand{\chievaluate}{\chiexevaluate{H}{h}}
\begin{document}

\title{Chromatic Quasisymmetric Class Functions for combinatorial Hopf monoids}

\author{Jacob A. White}

\address{School of Mathematical and Statistical Sciences, University of Texas Rio Grande Valley, USA}

\begin{keyword}
Hopf monoids, quasisymmetric functions, class functions, balanced simplicial complexes
\end{keyword}

\begin{abstract}
    We study the chromatic quasisymmetric class function of a linearized combinatorial Hopf monoid. Given a linearized combinatorial Hopf monoid $H$, and an $H$-structure $h$ on a set $N$, there are proper colorings of $h$, generalizing graph colorings and poset partitions. We show that the automorphism group of $h$ acts on the set of proper colorings. The chromatic quasisymmetric class function enumerates the fixed points of this action, weighting each coloring with a monomial. For the Hopf monoid of graphs this invariant generalizes Stanley's chromatic symmetric function and specializes to the orbital chromatic polynomial of Cameron and Kayibi.

We also introduce the flag quasisymmetric class function of a balanced relative simplicial complex equipped with a group action.
We show that, under certain conditions, the chromatic quasisymmetric class function of $h$ is the flag quasisymmetric class function of a balanced relative simplicial complex that we call the coloring complex of $h$. We use this result to deduce various inequalities for the associated orbital polynomial invariants. We apply these results to several examples related to enumerating graph colorings, poset partitions, generic functions on matroids or generalized permutohedra, and others.

\end{abstract}

\maketitle

\section{Introduction}

Given a graph $G$, let $\mathfrak{G}$ be a subgroup of the automorphism group of $G$. Two colorings $f$ and $g$ are equivalent if $g = f \circ \mathfrak{g}^{-1}$ for some $\mathfrak{g} \in \mathfrak{G}$. Let $\chi(G,\mathfrak{G},k)$ denote the number of equivalence classes of $k$-colorings of $G$. Then $\chi(G,\mathfrak{G}, k)$ is a polynomial, called the orbital chromatic polynomial by Cameron and Kayibi \cite{cameron-kayibi}. 

In attempting to prove various properties about $\chi(G, \mathfrak{G}, k)$, and a symmetric function generalization, we discovered that the best approach was to apply tools from representation theory, which required working with class functions.
We now introduce the chromatic quasisymmetric class function of a graph $\spe{g}$. Given a graph $\spe{g}$, let $F(\spe{g})$ denote the set of all proper colorings $f:V(\spe{g}) \to \mathbb{N}$. If $\Aut(\spe{g})$ is the automorphism group of $\spe{g}$, then $\Aut(\spe{g})$ acts on $F(\spe{g})$ by $\mathfrak{g}f = f \circ \mathfrak{g}^{-1}$, where $f \in F(\spe{g})$ and $\mathfrak{g} \in \mathfrak{G}$.

Let $x_1, x_2, \ldots$ be commuting indeterminates, and let $\mathfrak{G} \subset \Aut(\spe{g})$. Given $\mathfrak{g} \in \mathfrak{G}$, we define \[\chi(\spe{g}, \mathfrak{G}, \mathbf{x}; \mathfrak{g}) = \sum_{f: \mathfrak{g}f = f} \prod_{v \in V(\spe{g})} x_{f(v)}\] where the sum is over proper colorings $f$ that are fixed by $\mathfrak{g}$.
The resulting power series is a quasisymmetric function, and as we vary $\mathfrak{g}$, we obtain a function on $\mathfrak{G}$ that is constant on conjugacy classes, and whose values are quasisymmetric functions. We call such functions \emph{quasisymmetric class functions}, and call $\chi(\spe{g}, \mathfrak{G}, \mathbf{x})$ the \emph{chromatic quasisymmetric class function} of $(\spe{g}, \mathfrak{G})$. In fact, $\chi(\spe{g}, \mathfrak{G}, \mathbf{x})$ is a symmetric class function. However, many of our other examples are not symmetric. Given two $\mathfrak{G}$-modules $V$ and $W$  and their respective characters $\rho_V$ and $\rho_W$, we write $\rho_V \leq_{\mathfrak{G}} \rho_W$ if $V$ is isomorphic to a submodule of $W$. Likewise, given a sequence $s_1 < s_2 < \cdots < d$, we let $\alpha(\{s_1, \ldots, s_k \}) = (s_1, s_2-s_1, \ldots, s_k-s_{k-1}, d-s_k)$.
We prove the following result:

\begin{theorem}
Let $\spe{g}$ be a graph on a vertex set $N$, and let $\mathfrak{G} \subseteq \Aut(\spe{g}).$ Then we have the following:
\begin{enumerate}
    \item Write \[\chi(\spe{g}, \mathfrak{G}, \mathbf{x}) = \sum_{S \subseteq [|N|-1]} c_{\mathfrak{G}, S} M_{\alpha(S)},\] where the $c_{\mathfrak{G}, S}$ are characters, and $M_{\alpha}$ is a monomial quasisymmetric function. For $S \subseteq T \subseteq [|N|-1]$, and $i \in [k]$, we have $c_{\mathfrak{G}, S} \leq_{\mathfrak{G}} c_{\mathfrak{G}, T}$.
     \item If we write $\chi(\spe{g}, \mathfrak{G}, x) = \sum_{i=0}^{|N|} f_i \binom{x}{i}$, then we have the following inequalities:
     \begin{enumerate}
    \item For $i \leq \frac{d}{2}$, we have $f_i \leq f_{i+1}$.
    \item For $i \leq \frac{d}{2}$, we have $f_i \leq f_{|N|-i}$.
    \item For all $i$, we have $(|N|-i)f_i \leq if_{i+1}$.
     \end{enumerate}
\end{enumerate}
\label{thm:maingraph}
\end{theorem}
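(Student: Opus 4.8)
The plan is to prove part (1) by direct combinatorics and then deduce part (2) from it by elementary double counting, so that the representation-theoretic content is concentrated in a single step. First I would reinterpret the coefficients $c_{\mathfrak{G},S}$. Fix $\mathfrak{g}\in\mathfrak{G}$. A proper coloring $f$ of $\spe{g}$ with $\mathfrak{g}f=f$ is constant on the orbits of $\mathfrak{g}$, and is encoded by the ordered set partition $(\pi_1,\dots,\pi_\ell)$ of $N$ into its nonempty color classes together with the strictly increasing list of colors used; each $\pi_j$ is independent in $\spe{g}$ (as $f$ is proper) and $\mathfrak{g}$-stable (as $\mathfrak{g}f=f$). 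Conversely, any $\mathfrak{g}$-stable ordered partition of $N$ into independent sets, together with any strictly increasing choice of colors, yields such an $f$, and summing $\prod_{v}x_{f(v)}$ over those color choices gives $M_{(|\pi_1|,\dots,|\pi_\ell|)}$. Grouping by the composition $\alpha=(|\pi_1|,\dots,|\pi_\ell|)$ of $|N|$, and using that the $M_{\alpha(S)}$ with $S\subseteq[|N|-1]$ are linearly independent, one reads off that $c_{\mathfrak{G},S}(\mathfrak{g})$ equals the number of fixed points of $\mathfrak{g}$ on the set $\mathcal{S}_S$ of ordered partitions of $N$ into independent sets whose block-size composition is $\alpha(S)$. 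Hence $c_{\mathfrak{G},S}$ is the character of the permutation module $\mathbb{C}[\mathcal{S}_S]$; in particular it is an honest character.

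For the monotonicity $c_{\mathfrak{G},S}\leq_{\mathfrak{G}}c_{\mathfrak{G},T}$ it suffices, by composing embeddings, to treat $T=S\cup\{t\}$, in which case $\alpha(T)$ is obtained from $\alpha(S)$ by splitting its $m$-th part (the one straddling position $t$, necessarily of size at least $2$) into two positive parts of prescribed sizes. The naive block-splitting map $\mathcal{S}_S\to\mathcal{S}_T$ defined using the linear order of $N$ is injective but \emph{not} $\mathfrak{G}$-equivariant, and this is the heart of the matter. Instead I would introduce the $\mathfrak{G}$-set $\mathcal{R}$ of all pairs $(\pi,\sigma)$ with $\pi\in\mathcal{S}_S$, $\sigma\in\mathcal{S}_T$, and $\sigma$ obtained from $\pi$ by splitting the $m$-th block. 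Then $(\pi,\sigma)\mapsto\sigma$ is a $\mathfrak{G}$-equivariant injection $\mathcal{R}\hookrightarrow\mathcal{S}_T$, since $\pi$ is recovered from $\sigma$ by merging its blocks in positions $m$ and $m+1$; and $(\pi,\sigma)\mapsto\pi$ is a $\mathfrak{G}$-equivariant surjection $\mathcal{R}\twoheadrightarrow\mathcal{S}_S$, since the $m$-th block of any $\pi\in\mathcal{S}_S$ can be split into parts of the prescribed sizes and any such pieces are again independent. Because $\mathfrak{G}$ is finite, the induced surjection $\mathbb{C}[\mathcal{R}]\twoheadrightarrow\mathbb{C}[\mathcal{S}_S]$ splits, so $\mathbb{C}[\mathcal{S}_S]$ is a direct summand of $\mathbb{C}[\mathcal{R}]$, which in turn embeds in $\mathbb{C}[\mathcal{S}_T]$; chaining these gives $c_{\mathfrak{G},S}\leq_{\mathfrak{G}}c_{\mathfrak{G},T}$. (Alternatively this can be phrased through the coloring complex of $\spe{g}$ and the general results on balanced relative simplicial complexes, but the argument above is self-contained.)

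For part (2), the specialization sending $M_{\alpha(S)}$ to $\binom{x}{|S|+1}$ (equivalently, setting $x_1=\dots=x_k=1$, all later variables to $0$, and writing $x=k$) gives $\chi(\spe{g},\mathfrak{G},x)=\sum_i f_i\binom{x}{i}$ with $f_i=\sum_{|S|=i-1}c_{\mathfrak{G},S}$ as characters; by Burnside's lemma $\langle f_i,1_{\mathfrak{G}}\rangle$ is the $i$-th coefficient of the orbital chromatic polynomial, so it suffices to prove the inequalities for the $f_i$ with $\leq$ read as $\leq_{\mathfrak{G}}$. For (c), summing the embeddings from part (1) over all pairs $S\subset T$ with $|S|=i-1$, $|T|=i$ — in which each such $S$ lies in $|N|-i$ pairs and each such $T$ in $i$ pairs — yields $(|N|-i)f_i\leq_{\mathfrak{G}}i\,f_{i+1}$. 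Then (a) follows since $|N|-i\geq i$ when $i\leq|N|/2$, after cancelling the common factor $i$. For (b), the same double count with the larger sets $U\supseteq S$, $|U|=|N|-i-1$ (which exist precisely when $i\leq|N|/2$) gives $\binom{|N|-i}{i}f_i\leq_{\mathfrak{G}}\binom{|N|-i-1}{i-1}f_{|N|-i}$, and dividing by the common value $\binom{|N|-i-1}{i-1}$ together with $\binom{|N|-i}{i}=\tfrac{|N|-i}{i}\binom{|N|-i-1}{i-1}\geq\binom{|N|-i-1}{i-1}$ yields $f_i\leq_{\mathfrak{G}}f_{|N|-i}$; the degenerate cases $i=0$ and (for $|N|$ even) $i=|N|/2$ are immediate and are checked separately.

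The one genuinely delicate step is the equivariance in part (1): over a fixed graph the monotonicity is a one-line injection, but making it $\mathfrak{G}$-equivariant forces the detour through the auxiliary set $\mathcal{R}$ of split pairs, so that a \emph{surjection} — rather than an injection — of permutation modules suffices, which is legitimate only because $\mathbb{C}\mathfrak{G}$ is semisimple. The remaining ingredients — the combinatorial identification of the $c_{\mathfrak{G},S}$ as permutation characters and the Lovász–Yamamoto-type counting deducing (2) from (1) — are routine once that step is in place.
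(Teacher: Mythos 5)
Your proposal is correct, but it takes a genuinely more self-contained route than the paper. The paper obtains Theorem \ref{thm:maingraph} as the graph case of Theorem \ref{thm:bigmain}: the $M$-coefficients are identified as permutation characters on $\varphi$-proper set compositions (Theorem \ref{thm:coloring}), then $M$-increasingness is proved geometrically, by passing to the coloring complex $\Sigma_{\chi}(\spe{g})$ (Theorem \ref{thm:autgeometry}) and showing that the flag quasisymmetric class function of any balanced relative simplicial complex is $M$-increasing via the equivariant map $\theta_{\alpha,\beta}(\sigma)=\sum_{\tau\supseteq\sigma}\tau$, whose injectivity is proved by extending $\sigma$ to a facet and extracting one coefficient (Proposition \ref{prop:thetaprops}, Theorem \ref{thm:increasing}); the polynomial inequalities then come from the specialization machinery (Theorem \ref{thm:coloring2}, Propositions \ref{prop:global}--\ref{prop:global3}, with the effectively flawless statement imported from the earlier paper). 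You instead stay entirely with graphs: your identification of $c_{\mathfrak{G},S}$ as the permutation character on ordered partitions into independent sets matches the paper, but your key step replaces the $\theta$-map by the $\mathfrak{G}$-set $\mathcal{R}$ of split pairs, using the equivariant injection $\mathcal{R}\hookrightarrow\mathcal{S}_T$ and the split (by semisimplicity of $\mathbb{C}\mathfrak{G}$) surjection $\mathcal{R}\twoheadrightarrow\mathcal{S}_S$; and you rederive inequalities (a)--(c) by explicit double counting over pairs $S\subset T$ and $S\subseteq U$, including the binomial-coefficient cancellation for (b), rather than citing the flawless-specialization result. All of these steps check out (the counts $|N|-i$, $i$, $\binom{|N|-i}{i}$, $\binom{|N|-i-1}{i-1}$ and the identity $\binom{|N|-i}{i}=\tfrac{|N|-i}{i}\binom{|N|-i-1}{i-1}$ are right, and cancelling a positive integer multiple of an effective virtual character is legitimate). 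What each approach buys: the paper's argument is uniform over all linearized combinatorial Hopf monoids with balanced convex characters, which is the point of the paper; yours is elementary and self-contained for graphs. One small simplification available to you: the composite of your splitting with the inclusion is essentially the averaging map $\pi\mapsto\sum_{\sigma:(\pi,\sigma)\in\mathcal{R}}\sigma$, which is injective outright because distinct $\pi$ yield sums supported on disjoint sets of $\sigma$ (each $\sigma$ determines $\pi$ by merging), so the detour through Maschke's theorem, while valid, is not needed.
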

A quasisymmetric class function which satisfies the first result is \emph{$M$-increasing}, while a polynomial which satisfies the first two conditions in the second result is \emph{strongly flawless}. 

The goal of this work is to provide a general method for creating new quasisymmetric class functions associated to combinatorial objects. An orbital version of the order polynomial of a poset was studied by Jochemko \cite{jochemko}, and a quasisymmetric function generalization for double posets was studied by Grinberg \cite{grinberg}. We generalized further to a quasisymmetric class function associated to a double poset \cite{white-2}, where we also proved an analogue of Theorem \ref{thm:maingraph}. Hence, we are after a general framework for constructing such quasisymmetric functions, and proving theorems about them. 

Previously, Aguiar, Bergeron, and Sottile \cite{aguiar-bergeron-sottile} have shown that the Hopf algebra of quasisymmetric functions $\qsym$ is the terminal object in the category of combinatorial Hopf algebras. Thus, given any collection of combinatorial objects that form the basis of a Hopf algebra, and a character, we obtain a quasisymmetric function invariant associated to each combinatorial object. We \cite{white-1} showed that, for Hopf monoids in species, we can describe the resulting quasisymmetric functions of Aguiar, Bergeron and Sottile as a generating function over $\varphi$-proper colorings. We review this construction in Subsection \ref{subsec:classfun}, and given the informal intuition here. A linearized Hopf monoid in species involves collections of labeled combinatorial objects $\spe{H}[N]$, which we call $\spe{H}$-structures, along with rules $\mu_{M,N}: \spe{H}[M] \times \spe{H}[N] \to \spe{H}[M \sqcup N]$ and $\Delta_{M,N}: \spe{H}[M \sqcup N] \to (\spe{H}[M] \times \spe{H}[N]) \cup \{0\}$ for how to combine or decompose combinatorial objects on disjoint label sets $M$ and $N$. We interpret $\Delta_{M,N}(\spe{h}) = 0$ to mean that $\spe{h}$ cannot be decomposed with respect to $M$ and $N$. This results in a more general notion of `linearized' Hopf monoid in species than what is studied in \cite{aguiar-mahajan-2}, but also allows the Hopf monoid of posets $\spe{P}$ as an example.

The character $\varphi$ determines which combinatorial objects are $1$-colorable. Then the $\varphi$-proper colorings are functions $f: N \to \mathbb{N}$ such that decomposing $\spe{h}$ with respect to $f^{-1}(0), f^{-1}(1), \ldots$, results in a sequence $\spe{h}_0$, $\spe{h}_1, \ldots$, of $1$-colorable $\spe{H}$-structures.

For example, for the Hopf monoid $\spe{G}$ of graphs, the map $\Delta_{M,N}$ corresponds to taking induced subgraphs, and the $1$-colorable graphs are independent sets, so for a graph $\spe{g}$ on a vertex set $N$, and a function $f: N \to \mathbb{N}$, the graph $\spe{g}_i$ is the induced subgraph on $f^{-1}(i)$, and then $f$ is proper if and only if $\spe{g}_i$ is $1$-colorable (equivalently, an independent set) for all $i$.

Given a linearized Hopf monoid $\spe{H}$ and a $\spe{H}$-structure $\spe{h}$, we let $F_{\varphi}(\spe{h})$ denote the set of $\varphi$-proper colorings of $\spe{h}$. Since combinatorial species involve labeled combinatorial objects, they come equipped with symmetric group actions. In particular, there is a notion of automorphism group $\Aut(\spe{h})$ for a $\spe{H}$-structure $\spe{h}$. 
Given a subgroup $\mathfrak{G} \subseteq \Aut(\spe{h})$, we show that $\mathfrak{G}$ acts on $F_{\varphi}(\spe{h})$. For graphs, this corresponds to definition of equivalence for colorings already given.
 For $\mathfrak{g} \in \mathfrak{G}$, we define \[\Psi_{\spe{H}, \varphi}(\spe{h}, \mathfrak{G}, \mathbf{x}; \mathfrak{g}) = \sum_{f \in F_{\varphi}(\spe{h}): \mathfrak{g}f = f} \prod_{v \in N} x_{f(v)}.\]
The resulting power series is a quasisymmetric function, and as we vary $\mathfrak{g}$, we obtain a $\qsym$-valued class function on $\mathfrak{G}$. We call this the \emph{$\varphi$-chromatic quasisymmetric class function of $(\spe{h}, \mathfrak{G})$}. There are a few interesting specializations to polynomial, including the orbital chromatic polynomial $\Psi^O_{\spe{H}, \varphi}(\spe{h}, \mathfrak{G}, x)$, which counts the number of orbits of $\varphi$-proper colorings with largest color at most $x$. These two invariants are general enough to study counting problems coming from group actions on:
\begin{enumerate}
    \item proper colorings of a graph,
    \item $P$-partitions of a poset,
    \item $M$-generic functions of a matroid, as defined in \cite{billera-jia-reiner},
    \item weak and strong colorings of mixed graphs, as defined in \cite{beck-et-al-2},
    \item $P$-partitions of double posets, as defined in \cite{grinberg},
    \item colorings of hypergraphs in the sense of \cite{qsym-hypergraph},
    \item $P$-generic functions of a generalized permutohedron, as defined in \cite{aguiar-ardila},
    \item colorings of simplicial complexes in the sense of \cite{benedetti-et-al}.
\end{enumerate}
 All of these examples are covered in some detail in Section \ref{sec:main}. In each case, the $\varphi$-chromatic quasisymmetric class function is $M$-increasing, and an orbital chromatic polynomial is strongly flawless.

We generalize Theorem \ref{thm:maingraph} to all of the above examples by giving a uniform proof that relies only on properties of the character $\varphi$.
We review the definition of balanced convex character in Section \ref{sec:realization}. Most characters studied in the literature are balanced convex characters, including the characters that give rise to all the various colorings listed above. The main condition for a balanced convex character is that $1$-colorable $\spe{H}$-structures can only be decomposed into $1$-colorable $\spe{H}$-structures.
We are able to prove the following results.
\begin{theorem}
Let $\spe{H}$ be a linearized Hopf monoid in species with linearized character $\varphi$. Let $N$ be a finite set, $\spe{h} \in \spe{H}[N]$, and $\mathfrak{G} \subseteq \Aut(\spe{h})$. Suppose that $\varphi$ is a balanced convex character.
\begin{enumerate}
    \item Then $\Psi_{\spe{H}, \varphi}(\spe{h}, \mathfrak{G}, \mathbf{x})$ is $M$-increasing. 
    \item The orbital polynomial $\Psi^O_{\spe{H}, \varphi}(\spe{h}, \mathfrak{G},x)$ is strongly flawless.
\end{enumerate}
\label{thm:main}
\end{theorem}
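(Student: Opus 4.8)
The plan is to reduce the statement to part~(1), and then to prove part~(1) by realizing $\Psi_{\spe{H}, \varphi}(\spe{h}, \mathfrak{G}, \mathbf{x})$ as the flag quasisymmetric class function of a balanced relative simplicial complex attached to $\spe{h}$, its \emph{coloring complex}, whose Cohen--Macaulayness is supplied by the convexity of $\varphi$. To see that part~(2) follows formally from part~(1): specializing $x_1 = \cdots = x_m = 1$ and $x_i = 0$ for $i > m$ sends $M_{\alpha(S)}$ to $\binom{m}{|S|+1}$, so writing $\Psi_{\spe{H}, \varphi}(\spe{h}, \mathfrak{G}, \mathbf{x}) = \sum_S c_{\mathfrak{G},S}\, M_{\alpha(S)}$ and averaging the character values over $\mathfrak{G}$ gives $f_k = \sum_{|S| = k-1}\langle c_{\mathfrak{G},S}, \mathbf 1\rangle$, the sum over $(k-1)$-subsets of $[|N|-1]$. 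As taking $\mathfrak{G}$-invariants is exact, part~(1) gives $\langle c_{\mathfrak{G},S},\mathbf 1\rangle \le \langle c_{\mathfrak{G},T},\mathbf 1\rangle$ whenever $S \subseteq T$; summing such inequalities along an inclusion-preserving injection from the $(k-1)$-subsets into the $k$-subsets, or into the $(|N|-k-1)$-subsets, of $[|N|-1]$ — such injections exist below the middle because $[|N|-1]$ has a symmetric chain decomposition — yields the defining inequalities of ``strongly flawless'', and the ratio inequality $(|N|-i)f_i \le i f_{i+1}$ drops out of double counting covers between the two ranks.

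For part~(1) I would construct the coloring complex $\mathcal{C}_\varphi(\spe{h})$: a balanced relative simplicial complex whose faces encode the ordered set compositions of $N$ (and of its subsets) along which $\spe{h}$ decomposes into $1$-colorable $\spe{H}$-structures, graded by their number of blocks, and whose non-relative faces of color-type $\alpha$ are precisely the $\varphi$-proper colorings of $\spe{h}$ of type $\alpha$. This is where the hypothesis is used: the balanced axiom — $1$-colorable structures can decompose only into $1$-colorable structures — closes this set system under the subface relation, so that $\mathcal{C}_\varphi(\spe{h})$ is genuinely a ($\mathfrak{G}$-stable) relative simplicial complex, and the convex axiom provides its geometric realization from Section~\ref{sec:realization}. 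The bookkeeping of Subsection~\ref{subsec:classfun} then identifies $\Psi_{\spe{H}, \varphi}(\spe{h}, \mathfrak{G}, \mathbf{x})$ with the flag quasisymmetric class function of $(\mathcal{C}_\varphi(\spe{h}), \mathfrak{G})$ and exhibits $c_{\mathfrak{G},S}$ as the equivariant flag $f$-number indexed by $S$, so $c_{\mathfrak{G},S} = \sum_{R \subseteq S} h_R$ with the $h_R$ the equivariant flag $h$-numbers. Granting that $\mathcal{C}_\varphi(\spe{h})$ is relatively Cohen--Macaulay, each rank-selected relative subcomplex is again relatively Cohen--Macaulay, hence its reduced homology is concentrated in top degree and is, by the relative equivariant form of Stanley's rank-selection theorem, a genuine $\mathfrak{G}$-representation equal to $h_R$; thus every $h_R$ is an honest $\mathfrak{G}$-character and $c_{\mathfrak{G},T} - c_{\mathfrak{G},S} = \sum_{R\subseteq T,\, R\not\subseteq S} h_R$ is effective for $S \subseteq T$, i.e.\ $c_{\mathfrak{G},S}\le_{\mathfrak{G}} c_{\mathfrak{G},T}$. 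Theorem~\ref{thm:maingraph} is the specialization to the Hopf monoid of graphs.

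The crux, and the step I expect to be hardest, is proving that $\mathcal{C}_\varphi(\spe{h})$ is relatively Cohen--Macaulay; the rest is formal and, notably, needs nothing about $\mathfrak{G}$ beyond the fact that it acts. I would first attempt a line shelling read off from the convex realization of Section~\ref{sec:realization}, which would deliver relative Cohen--Macaulayness together with control of the flag $h$-numbers; the subtlety is the relative structure — ``proper'' colorings versus all ordered compositions — and shelling compatibly with it. If a clean line shelling is elusive, the fallback is a recursion on the Hopf monoid: peel off the last color class of a coloring and use coassociativity to express $\mathcal{C}_\varphi(\spe{h})$ as assembled from cones over coloring complexes of smaller $\spe{H}$-structures, invoke the convexity axiom to control the gluing, and deduce relative Cohen--Macaulayness by induction via Mayer--Vietoris.
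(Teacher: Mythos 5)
Your reduction of part (2) to part (1) is sound and is essentially the paper's: pair with the trivial character (Theorem \ref{thm:coloring2} and Proposition \ref{prop:global}), then sum the resulting inequalities $\langle c_{\mathfrak{G},S}, 1\rangle \leq \langle c_{\mathfrak{G},T}, 1\rangle$ over containment-preserving injections between ranks of the Boolean lattice, with the double count of covers giving the ratio inequality (Propositions \ref{prop:global2} and \ref{prop:global3}). Likewise your identification of $\Psi_{\spe{H},\varphi}(\spe{h},\mathfrak{G},\mathbf{x})$ with the flag quasisymmetric class function of the coloring complex is exactly Theorem \ref{thm:autgeometry}.

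The gap is the step you yourself call the crux. You derive the $M$-increasing property from relative Cohen--Macaulayness of $\Sigma_{\varphi}(\spe{h})$, via effectiveness of equivariant flag $h$-numbers and an equivariant rank-selection theorem, but you never prove that $\Sigma_{\varphi}(\spe{h})$ is relatively Cohen--Macaulay: the two strategies you sketch (a line shelling of the realization, or a Mayer--Vietoris induction) are left speculative, and nothing in the hypotheses of Theorem \ref{thm:main} supplies such a statement. A balanced convex character gives precisely the three conditions defining a balanced relative simplicial complex --- the sandwich property and purity, with $\kappa$ a proper coloring --- and no control over the topology of rank-selected subcomplexes; the paper neither asserts nor needs Cohen--Macaulayness in this generality. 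The point you are missing is that purity and the sandwich property already suffice. For $S \subseteq T$, the $\mathfrak{G}$-equivariant map $\theta$ sending a face $\sigma$ with $\kappa(\sigma)=S$ to $\sum_{\tau \,:\, \sigma\subseteq\tau,\ \kappa(\tau)=T}\tau$ is injective: extend $\sigma$ to a facet $\gamma$ and set $\tau=\{v\in\gamma:\kappa(v)\in T\}$; then $\tau\in\Sigma_{\varphi}(\spe{h})$ by the sandwich property, and $\sigma$ is the unique face of color-set $S$ contained in $\tau$, so the coefficient of $\tau$ in $\theta(\vec{v})$ equals the coefficient of $\sigma$ in $\vec{v}$. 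Hence the span of type-$S$ faces embeds as a $\mathfrak{G}$-submodule of the span of type-$T$ faces, which gives $c_{\mathfrak{G},S}\leq_{\mathfrak{G}} c_{\mathfrak{G},T}$ directly; this is Proposition \ref{prop:thetaprops} and Theorem \ref{thm:increasing}. Your Cohen--Macaulay route, if completed, would prove something strictly stronger (effectiveness of the flag $h$-characters), but as written the proof of part (1) is incomplete, and that extra strength is not needed for the theorem.
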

There are other specializations of $\Psi_{\spe{H}, \varphi}(\spe{h}, \mathfrak{G}, \mathbf{x})$ that we do not mention in this introduction.
 After we have defined the $\varphi$-chromatic quasisymmetric functions in \ref{subsec:classfun}, and various specializations in Subsection \ref{subsec:orbital}, we state Theorem \ref{thm:bigmain} in Section \ref{sec:main}, which contains all the inequalities for all the invariants we introduce in this paper.

The proofs of our results involve a mix of algebraic and geometric techniques. In Section \ref{sec:prelim}, we discuss quasisymmetric class functions and polynomial class functions. This material was studied previously in \cite{white-2}. We show that the property of being $M$-increasing for a quasisymmetric class function leads to various inequalities for the $f$-vector of the principal specialization. The second part of Theorem \ref{thm:main} follows from these more general properties. Thus, the main challenge is to show that our class functions are $M$-increasing.

In \cite{white-1}, we studied a generalization of Steingr\'imsson's coloring complex of a graph. Given a balanced convex character $\varphi$, we showed that there exists a balanced relative simplicial complex $\Sigma_{\varphi}(\spe{h})$ such that \[\Psi_{\spe{H}, \varphi}(\spe{h}, \mathbf{x}) = \sum_{S \subseteq [d]} F_S(\Phi) M_{\alpha(S)}\] where $F_S(\Phi)$ is the flag $f$-vector of $\Phi$, $M_{\alpha}$ is a monomial quasisymmetric function, and given $s_1 < \cdots < s_k < d$, we have $\alpha(\{s_1, \ldots, s_k \}) = (s_1, s_2-s_1, \ldots, s_k - s_{k-1}, d-s_k)$.
In Section \ref{sec:geometry}, we discuss balanced relative simplicial complexes $\Phi$ and define the flag quasisymmetric \emph{class} function $F(\Phi, \mathfrak{G}, \mathbf{x})$ for $\Phi$ with respect to a group action. We show that $\Aut(\spe{h})$ acts on $\Sigma_{\varphi}(\spe{h})$, and that $\Psi_{\spe{H}, \varphi}(\spe{h}, \mathfrak{G}, \mathbf{x}) = F(\Sigma_{\varphi}(\spe{h}), \mathfrak{G}, \mathbf{x}).$ We show in Theorem \ref{thm:increasing} that $F(\Phi, \mathfrak{G}, \mathbf{x})$ is always $M$-increasing. Taken together, we obtain the first part of Theorem \ref{thm:main}.

\section{Preliminaries}
\label{sec:prelim}
Given a basis $B$ for a vector space $V$ over $\mathbb{C}$, and $\vec{\beta} \in B, \vec{v} \in V$, we let $[\vec{\beta}] \vec{v}$ denote the coefficient of $\vec{\beta}$ when we expand $\vec{v}$ in the basis $B$.

Let $\mathbf{x} = x_1, x_2, \ldots $ be a sequence of commuting indeterminates. Recall that an \emph{integer composition} $\alpha$ of a positive integer $n$ is a sequence $(\alpha_1, \ldots, \alpha_k)$ of positive integers such that $\alpha_1+\cdots + \alpha_k = n$. We write $\ell(\alpha) = k$, and $\alpha \models n$. Let $n \in \mathbb{N}$ and let $F(\mathbf{x}) \in \mathbb{C}[[\mathbf{x}]]$ be a homogeneous formal power series in $\mathbf{x}$, where the degree of every monomial in $F(\mathbf{x})$ is $n$. Then $F(\mathbf{x})$ is a \emph{quasisymmetric function} if it satisfies the following property:
for every $i_1 < i_2 < \cdots < i_k$, and every integer composition $\alpha \models n$ with $\ell(\alpha) = k$, we have $[\prod_{j=1}^k x_{i_j}^{\alpha_j}]F(\mathbf{x}) = [\prod_{j=1}^k x_j^{\alpha_j}]F(\mathbf{x})$. Often, we will define quasisymmetric functions that are generating functions over a collection of functions. Given a function $w: S \to \mathbb{N}$, we define 
\begin{equation}
    \mathbf{x}^w = \prod_{v \in S} x_{w(v)}.
    \label{eq:xf}
\end{equation} For example, the chromatic symmetric function of a graph $G$ is defined as $\chi(G, \mathbf{x}) = \sum\limits_{f:V \to \mathbb{N}} \mathbf{x}^f$ where the sum is over all proper colorings of $G$.

Given an integer composition $\alpha = (\alpha_1, \alpha_2, \ldots, \alpha_k)$ of $n$, we let \[M_{\alpha} = \sum\limits_{i_1 < \cdots < i_k} \prod_{j=1}^k x_{i_j}^{\alpha_j}.\] These are the \emph{monomial quasisymmetric functions}, which form a basis for the ring of quasisymmetric functions.

Our proofs rely a lot on working with set compositions, and quasisymmetric functions related to set compositions.
Given a finite set $N$, a \emph{set composition} is a sequence $(S_1, \ldots, S_k)$ of disjoint non-empty subsets whose union is $N$. We denote set compositions as $S_1|S_2|\cdots|S_k$, and refer to the sets $S_i$ as \emph{blocks}. We use $C \models N$ to denote that $C$ is a set composition of $N$, and let $\ell(C) = k$ be the length of the composition. Given $C$, the associated integer composition is \begin{equation} 
\alpha(C) = (|C_1|, |C_2|, \ldots, |C_k|).
\label{eq:alphac}
\end{equation}
We refer to $\alpha(C)$ as the \emph{type} of $C$. We partially order set compositions by refinement. We write $B \leq C$ if $C$ is a refinement of $B$.

\subsection{Group actions and class functions}
\label{subsec:groupaction}

 Given a group action $\mathfrak{G}$ on a set $X$, we let $X / \mathfrak{G}$ denote the set of orbits. For $x \in X$, $\mathfrak{G}_x$ is the stabilizer subgroup, and $\mathfrak{G}(x)$ is the orbit of $x$. Finally, for $\mathfrak{g} \in \mathfrak{G}$, we let $\Fix_{\mathfrak{g}}(X) = \{x \in X: \mathfrak{g}x = x \}$.

There is an action of $\mathfrak{S}_N$ on the collection of all set compositions of $N$. Given a permutation $\mathfrak{g} \in \mathfrak{S}_N$, and a set composition $C \models N$, we let  \[\mathfrak{g} C = \mathfrak{g}(C_1)| \mathfrak{g}(C_2) | \cdots | \mathfrak{g}(C_k)\]
where $\mathfrak{g}(C_i) = \{ \mathfrak{g}x: x \in C_i \}$.

We assume familiarity with the theory of complex representations of finite groups - see \cite{fulton-harris} for basic definitions. Recall that, given any group action of $\mathfrak{G}$ on a finite set $X$, there is a group action on $\mathbb{C}^X$ as well, which gives rise to a representation. The resulting representations are called \emph{permutation representations}. 
Let $R$ be a $\mathbb{C}$-algebra. Then an \emph{$R$-valued class function} is a function $\chi: \mathfrak{G} \to R$ such that, for every $\mathfrak{g}, \mathfrak{h} \in \mathfrak{G}$, and $\chi \in C(\mathfrak{G}, R)$, we have $\chi(\mathfrak{hg}\mathfrak{h}^{-1}) = \chi(\mathfrak{g}).$ Let $C(\mathfrak{G}, R)$ be the set of $R$-valued class functions from $\mathfrak{G}$ to $R$. For our paper, $R$ is usually $\qsym$ or $\mathbb{C}[x]$. We refer to $\chi \in C(\mathfrak{G}, \mathbb{C})$ as class functions when no confusion arises.

There is an orthonormal basis of $C(\mathfrak{G}, \mathbb{C})$ given by the characters of the irreducible representations of $\mathfrak{G}$. We refer to elements $\chi \in C(\mathfrak{G}, \mathbb{C})$ that are integer combinations of irreducible characters as \emph{virtual characters}, and elements that are nonnegative integer linear combinations as \emph{effective characters}. Let $E(\mathfrak{G}, \mathbb{C})$ be the set of effective characters. Finally, we say $\chi$ is a \emph{permutation character} if it is the character of a permutation representation. We partially order $E(\mathfrak{G}, \mathbb{C})$ by saying $\chi \leq_{\mathfrak{G}} \psi$ if $\psi - \chi$ is an effective character.

Let $\mathbf{B}$ be a basis for $R$. For $b \in \mathbf{B}$, $\mathfrak{g} \in \mathfrak{G}$, and $\chi \in C(\mathfrak{G}, R)$, let $\chi_b(\mathfrak{g}) = [b]\chi(\mathfrak{g})$. Then $\chi_b$ is a $\mathfrak{C}$-valued class function. Thus we can write $\chi = \sum\limits_{b \in \mathbf{B}} \chi_b b$. Conversely, given a family $\chi_b$ of $\mathfrak{C}$-valued class functions, one for each $b \in \mathbf{B}$, the function $\chi$ defined by $\chi(\mathfrak{g}) = \sum\limits_{b \in \mathbf{B}} \chi_b(\mathfrak{g}) b$ is an $R$-valued class function in $C(\mathfrak{G}, R)$.

Let $\chi$ be an $R$-valued class function. We say that $\chi$ is \emph{$\textbf{B}$-realizable} if $\chi_b$ is a permutation character for all $b$.
If $\mathbf{B}$ has a partial order on it, then we say $\chi$ is \emph{$\mathbf{B}$-increasing} if for all $b \leq c$ in $\mathbf{B}$, we have $\chi_b \leq_{\mathfrak{G}} \chi_c$. Assuming $\chi$ is $\mathbf{B}$-realizable, this is equivalent to saying that $\chi_c$ is the character for the representation of $\mathfrak{G}$ on some module $V$, and $\chi_b$ is the character of a representation of a submodule of $V$.

A \emph{quasisymmetric class function} is a $\qsym$-valued class function. Given a quasisymmetric class function $F(\mathfrak{G}, \mathbf{x})$, we write $F(\mathfrak{G}, \mathbf{x}) = \sum_{\alpha \models n} f_{\mathfrak{G}, \alpha} M_{\alpha}$, where $f_{\mathfrak{G}, \alpha} \in C(\mathfrak{G}, \mathbb{C})$.

Finally, we define a function $\langle \cdot, \cdot \rangle: C(\mathfrak{G}, R) \times C(\mathfrak{G}, R) \to R$ by 
$\langle \chi, \psi \rangle = \frac{1}{|\mathfrak{G}|} \overline{\chi}(\mathfrak{g}) \psi(\mathfrak{g})$ where $\overline{x}$ is the complex conjugate. In the case where $R = \mathbb{C}$, this is the usual inner product on class functions.

\begin{proposition}
\label{prop:global} 
Let $\mathfrak{G}$ be a finite group, let $R$ be a $\mathbb{C}$-algebra with basis $\mathbf{B}$. Fix $\chi \in C(\mathfrak{G}, R)$.
\begin{enumerate}[label={(\arabic*)},itemindent=1em]
\item Given an irreducible character $\psi$, we have $\langle \chi, \psi \rangle = \sum\limits_{b, c \in B} \langle \chi_b, \psi_c \rangle b \cdot c$.
\item Suppose $\textbf{B}$ is partially ordered. Let $\psi \in C(\mathfrak{G}, \mathbb{C})$. If $\chi$ is $B$-increasing, then for all $b \leq c$ in $\textbf{B}$ we have $[b]\langle \psi, \chi \rangle \leq [c]\langle \psi, \chi \rangle.$  \label{prop:charincreasing}
\end{enumerate}
\end{proposition}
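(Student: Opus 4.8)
The plan is to prove both parts by unwinding the definitions of the pairing $\langle \cdot, \cdot \rangle$ and of the coefficient-extraction maps $\chi \mapsto \chi_b$, and then reducing everything to the classical $\mathbb{C}$-valued theory of characters.

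For part (1), I would start by writing $\chi = \sum_{b \in \mathbf{B}} \chi_b \, b$, which is legitimate by the discussion preceding the proposition. Since $\psi$ takes values in $\mathbb{C}$, for each $\mathfrak{g} \in \mathfrak{G}$ the product $\overline{\chi}(\mathfrak{g})\psi(\mathfrak{g})$ equals $\sum_{b \in \mathbf{B}} \overline{\chi_b}(\mathfrak{g}) \psi(\mathfrak{g}) \, b$; summing over $\mathfrak{g}$ and dividing by $|\mathfrak{G}|$ gives $\langle \chi, \psi \rangle = \sum_{b} \langle \chi_b, \psi \rangle \, b$, where the inner pairing is now the ordinary $\mathbb{C}$-valued inner product. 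To match the stated formula I would expand $\psi$ in the basis as well, using that $\psi$, being $\mathbb{C}$-valued, has $\psi = \sum_{c} \psi_c \, c$ where exactly one coefficient class function is nonzero if $\mathbf{B}$ contains $1$ — but in general the cleaner route is to observe that the stated right-hand side $\sum_{b,c} \langle \chi_b, \psi_c \rangle\, b\cdot c$ is exactly what one gets by substituting $\psi = \sum_c \psi_c c$ into $\langle \chi, \psi\rangle$ and using bilinearity of the pairing over $R$ together with $R$-linearity in each slot (the scalars $b, c$ pull out). So the key step is just checking that $\langle \cdot, \cdot\rangle$ as defined is $R$-bilinear in the appropriate sense and that coefficient extraction commutes with it; both are immediate from the formula $\langle \chi,\psi\rangle = \frac{1}{|\mathfrak{G}|}\sum_{\mathfrak{g}} \overline{\chi}(\mathfrak{g})\psi(\mathfrak{g})$.

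For part (2), suppose $\chi$ is $\mathbf{B}$-increasing and fix $b \leq c$ in $\mathbf{B}$. I would compute $[b]\langle \psi, \chi\rangle$. By the same bilinearity/extraction argument as in part (1) (now with the roles swapped, and noting $\psi$ is $\mathbb{C}$-valued so $\overline{\psi}$ stays scalar), $[b]\langle \psi, \chi\rangle = \langle \psi, \chi_b\rangle$ and $[c]\langle \psi, \chi\rangle = \langle \psi, \chi_c\rangle$, the ordinary inner products. Since $\chi$ is $\mathbf{B}$-increasing, $\chi_c - \chi_b$ is an effective character, hence $\chi_c - \chi_b$ is the character of an honest $\mathfrak{G}$-module $W$. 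Then $\langle \psi, \chi_c\rangle - \langle \psi, \chi_b\rangle = \langle \psi, \chi_c - \chi_b\rangle = \langle \psi, \chi_W\rangle$, which is a nonnegative integer whenever $\psi$ is an effective character (it counts multiplicities of irreducible constituents). The statement as written only assumes $\psi \in C(\mathfrak{G}, \mathbb{C})$, so strictly the inequality $[b]\langle\psi,\chi\rangle \le [c]\langle\psi,\chi\rangle$ needs $\psi$ to be effective — I would either add that hypothesis or note it is the intended reading; granting it, nonnegativity of $\langle \psi, \chi_W \rangle$ finishes the argument.

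The main obstacle is purely bookkeeping: making the "coefficient extraction commutes with the pairing" step precise, i.e. verifying that $[b]$ applied to an $R$-valued expression built bilinearly from class functions equals the corresponding $\mathbb{C}$-valued pairing of the extracted coefficients, uniformly over which slot carries the conjugation. Once that compatibility is nailed down, both parts are short: part (1) is a substitution, and part (2) is the observation that effective-character differences pair nonnegatively with effective characters, which is the classical fact that $\langle \chi_V, \chi_W \rangle \in \mathbb{Z}_{\geq 0}$ for genuine modules $V, W$.
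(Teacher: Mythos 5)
Your proposal is correct and follows essentially the same route as the paper's own argument: expand $\chi$ and $\psi$ in the basis $\mathbf{B}$ and use bilinearity of $\langle\cdot,\cdot\rangle$ for part (1), and for part (2) use $[b]\langle\psi,\chi\rangle=\langle\psi,\chi_b\rangle$ together with the fact that $\chi_c-\chi_b$ is an effective character, hence pairs nonnegatively with $\psi$. Your remark that part (2) really requires $\psi$ to be effective (or irreducible) rather than an arbitrary element of $C(\mathfrak{G},\mathbb{C})$ is consistent with the paper's implicit reading, since it only invokes the result with $\psi$ the trivial character.
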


\subsection{Polynomial class functions and Principal specialization}
Given a polynomial $p(x)$ of degree $d$, define the $f$-vector $(f_0, \ldots, f_d)$ via $p(x) = \sum_{i=0}^d f_i \binom{x}{i}$. We say that $p(x)$ is \emph{strongly flawless} if the following inequalities are satisfied:
\begin{enumerate}
    \item for $0 \leq i \leq \frac{d-1}{2}$, we have $f_i \leq f_{i+1}$.
    \item For $0 \leq i \leq \frac{d}{2}$, we have $f_i \leq f_{d-i}$.
\end{enumerate}
There is a lot of interest in log-concave and unimodal sequences in combinatorics. We consider strongly flawless sequences to be interesting, as strongly flawless unimodal sequences can be seen as a generalization of symmetric unimodal sequences. Examples of results with strongly flawless sequences include the work of Hibi \cite{hibi} and Juhnke-Kubitzke and Van Le \cite{kubitzke}.
We denote the entries of the $f$-vector as $f_i(p(x))$ when discussing multiple polynomials.

There is a generalization of $f$-vector for polynomial class functions, which we call the \emph{equivariant $f$-vector}. Given a group $\mathfrak{G}$, and a polynomial class function $p(\mathfrak{G}, x)$, we write $p(\mathfrak{G}, x) = \sum_{i=1}^d f_i \binom{x}{i}$, where the $f_i$ are characters.  
We say that $p(\mathfrak{G}, x)$ is \emph{effectively flawless} if we have the following system of inequalities:
\begin{enumerate}
    \item For $0 \leq i \leq \frac{d-1}{2}$, we have $f_i \leq_{\mathfrak{G}} f_{i+1}$.
    \item For $0 \leq i \leq \frac{d}{2}$, we have $f_i \leq_{\mathfrak{G}} f_{d-i}$.
\end{enumerate}

Given a quasisymmetric function $F(\mathbf{x})$ of degree $d$, there is an associated polynomial $\ps(F)(x)$ given by principal specialization. For $x \in \mathbb{N}$, we set \[x_i = \begin{cases} 1 & i \leq x \\ 0 & i > x
\end{cases} \]
The resulting sequence is a polynomial function in $x$ of degree $d$, which we denote by $\ps(F)(x)$. 
If we write $F(\mathbf{x}) = \sum\limits_{\alpha \models d} c_{\alpha} M_{\alpha}$, then $f_i(\ps(F(\mathbf{x}))) = \sum\limits_{\alpha \models d: \ell(\alpha) = i} c_{\alpha}$.

Let $F(\mathfrak{G}, \mathbf{x})$ be a quasisymmetric class function of degree $d$, and $\mathfrak{g} \in \mathfrak{G}$. Define $\ps(F) \in C(\mathfrak{G}, \mathbb{C}[x])$ by $\ps(F)(x;\mathfrak{g}) = \ps(F(\mathbf{x}; \mathfrak{g}))(x).$ We refer to $\ps(F)$ as the principal specialization, which results in an polynomial class function.

The following results were obtained in \cite{white-2}.
\begin{proposition}
\label{prop:global2}
Let $F(\mathfrak{G}, \mathbf{x})$ be a quasisymmetric class function be of degree $d$, and $\mathfrak{g} \in \mathfrak{G}$.  
\begin{enumerate}[label={(\arabic*)},itemindent=1em]
    \item If we write $F(\mathfrak{G}, \mathbf{x}) = \sum\limits_{\alpha \models d} \chi_{\alpha} M_{\alpha}$, then $\ps(F) = \sum\limits_{i=0}^d \chi_{\alpha} \binom{x}{\ell(\alpha)}$.
    \item If $F(\mathfrak{G}, \mathbf{x})$ is $M$-realizable and $M$-increasing, then $\ps(F)$ is effectively flawless. \label{prop:ftohorbit}
\end{enumerate}
\end{proposition}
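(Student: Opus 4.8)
The plan is to prove the two parts in order, treating the second as the substantive one and the first as essentially a bookkeeping step. For part (1), I would start from the definition of principal specialization applied conjugacy-class-wise: for each $\mathfrak{g} \in \mathfrak{G}$ the value $F(\mathbf{x};\mathfrak{g})$ is an honest quasisymmetric function of degree $d$, so we may apply the scalar identity already recorded in the text, namely that if $G(\mathbf{x}) = \sum_{\alpha \models d} c_\alpha M_\alpha$ then $f_i(\ps(G)) = \sum_{\ell(\alpha) = i} c_\alpha$, equivalently $\ps(G)(x) = \sum_\alpha c_\alpha \binom{x}{\ell(\alpha)}$. Writing $F(\mathfrak{G}, \mathbf{x}) = \sum_\alpha \chi_\alpha M_\alpha$ with $\chi_\alpha \in C(\mathfrak{G}, \mathbb{C})$ means precisely that $c_\alpha = \chi_\alpha(\mathfrak{g})$ for that $\mathfrak{g}$; substituting and recalling that $\ps(F)(x;\mathfrak{g})$ was defined as $\ps(F(\mathbf{x};\mathfrak{g}))(x)$ gives $\ps(F)(x;\mathfrak{g}) = \sum_\alpha \chi_\alpha(\mathfrak{g}) \binom{x}{\ell(\alpha)}$, which is the claimed identity of polynomial class functions since it holds for every $\mathfrak{g}$. (I would note in passing that $\binom{x}{0}$ only appears if the empty composition is allowed; for $d \geq 1$ the sum effectively runs over $\ell(\alpha) \geq 1$, matching the convention $\sum_{i=1}^d$ used when the equivariant $f$-vector was introduced.)

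For part (2), the key observation is that part (1) identifies the equivariant $f$-vector of $\ps(F)$ explicitly: grouping the compositions by length, $f_i(\ps(F)) = \sum_{\alpha \models d,\ \ell(\alpha) = i} \chi_\alpha$ as a character of $\mathfrak{G}$. So the task reduces to deducing the two defining inequalities of "effectively flawless" — $f_i \leq_{\mathfrak{G}} f_{i+1}$ for $i \leq \frac{d-1}{2}$ and $f_i \leq_{\mathfrak{G}} f_{d-i}$ for $i \leq \frac{d}{2}$ — from the hypothesis that $F$ is $M$-increasing, i.e. that $\chi_\alpha \leq_{\mathfrak{G}} \chi_\beta$ whenever $\alpha \leq \beta$ in refinement order. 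Since refinement order on compositions of $d$ is the Boolean lattice on $[d-1]$, this is a purely combinatorial matching problem: I need, for each $i$ in the stated range, an injection $j \mapsto \phi(j)$ from the length-$i$ compositions into the length-$(i+1)$ (resp. length-$(d-i)$) compositions that is order-preserving in the sense that $\alpha \leq \phi(\alpha)$ in refinement — equivalently, an order-matching in the Boolean lattice $B_{d-1}$ sending the rank-$(i-1)$ level into the rank-$i$ level (resp. rank-$(i-1)$ into rank-$(d-i-1)$). Such matchings exist: the standard symmetric-chain / normalized-matching property of the Boolean lattice provides an order-matching from any level to the next when below the middle, and composing these gives a level-to-level order-injection between any two levels $p < q$ with $p + q \leq (\text{rank of top})$. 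Given the injection $\phi$, effectivity follows termwise: $\chi_{\phi(\alpha)} - \chi_\alpha$ is an effective character for each $\alpha$ by the $M$-increasing hypothesis, hence $f_{i+1} - f_i = \sum_\alpha (\chi_{\phi(\alpha)} - \chi_\alpha) + \sum_{\beta \notin \operatorname{im}\phi} \chi_\beta$ is a sum of effective characters, i.e. effective; likewise for $f_{d-i} - f_i$. This is exactly the equivariant analogue of the classical fact (Hibi-type) that $M$-increasing quasisymmetric functions have flawless specializations, and I would present it by citing the Boolean-lattice matching and then doing the termwise subtraction.

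The main obstacle I anticipate is verifying — or locating a clean citation for — the level-to-level order-matching in the Boolean lattice in the precise form needed: not just that each level maps injectively to the next (which is the normalized matching property and is standard), but that one can do this order-compatibly all the way from level $p$ to level $q$ for the full range $p+q \le d-1$, so that the resulting injection $\phi$ still satisfies $\alpha \le \phi(\alpha)$ under refinement (needed to invoke $M$-increasingness, which only compares refinement-comparable pairs). This follows from the existence of a symmetric chain decomposition of $B_{d-1}$: each chain passing through level $p$ also passes through level $q$ whenever $p + q \le d-1$ and $p \le q$, and matching along chains gives the desired order-injection. I would make sure this symmetric-chain argument is spelled out (or cite de Bruijn-Tengbergen-Kruyswijk), since it is the one genuinely non-formal ingredient; everything else is the termwise effectivity bookkeeping and the substitution from part (1). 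It is also worth a sentence confirming that "$M$-increasing" as defined for quasisymmetric class functions via the $M_\alpha$ basis, together with the refinement partial order on that basis mentioned earlier, is exactly the hypothesis being used, so there is no gap between the two occurrences of the term.
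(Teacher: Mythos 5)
Your argument is correct. Note first that this paper does not actually prove Proposition \ref{prop:global2}; it is quoted from \cite{white-2}, so there is no in-paper proof to measure you against. Your part (1) is exactly the routine substitution one expects (apply $\ps M_{\alpha}=\binom{x}{\ell(\alpha)}$ pointwise in $\mathfrak{g}$), and your part (2) supplies the right mechanism: identify compositions of $d$ under refinement with the Boolean lattice $B_{d-1}$, take a symmetric chain decomposition (de Bruijn--Tengbergen--Kruyswijk) to get an order-injection $\phi$ from the length-$i$ level into the length-$(i+1)$ level when $i\le \frac{d-1}{2}$ and into the length-$(d-i)$ level when $i\le \frac{d}{2}$, and then conclude termwise that $f_{i+1}-f_i$ (resp.\ $f_{d-i}-f_i$) is a sum of the effective characters $\chi_{\phi(\alpha)}-\chi_{\alpha}$ (from $M$-increasing) plus the characters $\chi_{\beta}$ for $\beta$ outside the image (effective by $M$-realizability). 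Your care on two points is exactly where the content lies: the injection must satisfy $\alpha\le\phi(\alpha)$ in refinement, which the chain argument guarantees since a chain through level $p$ also meets level $q$ whenever $p\le q$ and $p+q\le d-1$, and the leftover terms genuinely need the $M$-realizable hypothesis, which you invoke. One small remark: for the first family of inequalities alone you could avoid chains entirely by a double-counting argument in the spirit of Proposition \ref{prop:global3} (each length-$i$ composition has $d-i$ refinements of length $i+1$ and each length-$(i+1)$ composition covers $i$ of length $i$), but that only yields $(d-i)f_i\le_{\mathfrak{G}} if_{i+1}$, not $f_i\le_{\mathfrak{G}} f_{i+1}$ or the symmetric inequality $f_i\le_{\mathfrak{G}} f_{d-i}$, so the level-to-level order-matching you describe (or an equivalent injectivity statement) is indeed the unavoidable ingredient.
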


We have one new result to add.
\begin{proposition}
Let $F(\mathfrak{G}, \mathbf{x})$ be a quasisymmetric class function of degree $d$. Let $(f_0, \ldots, f_d)$ be the $f$-vector of $\ps(F)$. Then for all $i$, we have \[ (d-i)f_i \leq_{\mathfrak{G}} if_{i+1}.\]
\label{prop:global3}
\end{proposition}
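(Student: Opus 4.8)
The plan is to reduce the inequality $(d-i)f_i \leq_{\mathfrak{G}} if_{i+1}$ to an identity at the level of monomial quasisymmetric coefficients, where it becomes a statement about an explicit injection of permutation modules indexed by set compositions. Write $F(\mathfrak{G},\mathbf{x}) = \sum_{\alpha \models d} \chi_\alpha M_\alpha$, so that by Proposition~\ref{prop:global2}(1) we have $f_i = \sum_{\alpha \models d:\ \ell(\alpha)=i} \chi_\alpha$. Thus the claim is that $(d-i)\sum_{\ell(\alpha)=i}\chi_\alpha \leq_{\mathfrak{G}} i\sum_{\beta \models d:\ \ell(\beta)=i+1}\chi_\beta$, i.e. that there is an injection of $\mathfrak{G}$-modules from the module with character $(d-i)\sum_{\ell(\alpha)=i}\chi_\alpha$ into the one with character $i\sum_{\ell(\beta)=i+1}\chi_\beta$.

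The combinatorial heart is the bipartite incidence between compositions of length $i$ and compositions of length $i+1$ obtained by splitting a part. Given $\alpha \models d$ with $\ell(\alpha)=i$, there are exactly $\sum_{j}(\alpha_j - 1) = d - i$ ways to split one part $\alpha_j$ into two positive parts in order, yielding a $\beta \models d$ with $\ell(\beta)=i+1$; conversely, given such a $\beta$, there are exactly $\ell(\beta)-1 = i$ adjacent pairs of parts that can be merged to recover some $\alpha$ of length $i$. So the incidence is $(d-i)$-regular on the length-$i$ side and $i$-regular on the length-$(i+1)$ side, which matches the multiplicities $(d-i)$ and $i$ appearing in the statement. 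I would set up a $\mathfrak{G}$-equivariant map: realize $(d-i)\chi_\alpha$ as $\bigoplus$ over the $d-i$ splittings of $\alpha$ of a copy of the module with character $\chi_\alpha$, and map the summand indexed by a splitting $\alpha \rightsquigarrow \beta$ identically into the copy of $\chi_\beta$ inside $i\chi_\beta$ indexed by the corresponding merge. The key point making this work $\mathfrak{G}$-equivariantly is that in the setting of Proposition~\ref{prop:global3} the coefficient $\chi_\alpha$ of $M_\alpha$ should be interpreted via the flag $f$-vector characters $F_S$ of a balanced (relative) simplicial complex — here $\chi_\alpha = F_{S}$ for $S = \alpha(S)^{-1}$-type data — and the refinement relation $F_S \leq_{\mathfrak G} F_T$ for $S \subseteq T$ (the $M$-increasing property) is exactly what lets us compare a length-$i$ coefficient with a length-$(i+1)$ coefficient it refines into. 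Concretely, each splitting $\alpha \rightsquigarrow \beta$ corresponds to an inclusion $S \subset T$ of subsets of $[d-1]$ with $|T \setminus S| = 1$, and $M$-increasingness gives $F_S \leq_{\mathfrak G} F_T$; summing these inclusions over the regular bipartite incidence and checking that no target copy is used more than $i$ times gives the desired injection.

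The step I expect to be the main obstacle is verifying that the assembled map is genuinely injective, i.e. that the several inclusions $F_S \hookrightarrow F_T$ landing in the same $\beta$ (there are $i$ of them, matching the coefficient $i$) can be chosen with pairwise disjoint images inside $i F_T$ — this is automatic if we literally take $i$ labelled copies of $F_T$ and send the merge-pair-$p$ summand into copy $p$, so the real content is bookkeeping: organizing the direct sums on both sides so that the regular bipartite incidence between length-$i$ and length-$(i+1)$ compositions induces a well-defined $\mathfrak G$-module map, and confirming the multiplicities $(d-i)$ and $i$ are exactly the degrees of this incidence. A secondary subtlety is handling the boundary/degenerate cases (e.g. $i=0$, where $f_1 \geq_{\mathfrak G} 0$ trivially since $0 \cdot f_0 = 0$, and $i=d$, where both sides are $0$), which I would dispatch directly. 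Once the equivariant injection is built, comparing characters gives $(d-i)f_i \leq_{\mathfrak G} i f_{i+1}$ as required.
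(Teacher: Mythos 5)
Your proposal is correct and follows essentially the same route as the paper: the paper's proof is exactly the double count over covering pairs $\alpha \leq \beta$ in the refinement order (each length-$i$ composition refines to $d-i$ compositions of length $i+1$, each of which coarsens to $i$ compositions of length $i$) combined with the coefficientwise inequality $\chi_{\alpha} \leq_{\mathfrak{G}} \chi_{\beta}$ coming from the $M$-increasing hypothesis, summed over all such pairs. The explicit construction of labelled module injections and the injectivity bookkeeping you anticipate are unnecessary, since $\leq_{\mathfrak{G}}$ is closed under adding effective characters, so one simply sums the inequalities as the paper does; note also that, like the paper, you are implicitly using the $M$-increasing hypothesis, which the proposition's statement omits but which is how it is applied in Theorem \ref{thm:bigmain}.
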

\begin{proof}
We have 
\begin{align*} (d-i)f_i & = \sum_{\alpha \models d: \ell(\alpha) = i} (d-i) \chi_{\alpha} \\
& = \sum_{\alpha \leq \beta \models d: \ell(\alpha) = i, \ell(\beta) = i+1} \chi_{\alpha} \\ 
& \leq_{\mathfrak{G}} \sum_{\alpha \leq \beta \models d: \ell(\alpha) = i, \ell(\beta) = i+1} \chi_{\beta} \\
& = \sum_{\beta \models d: \ell(\beta) = i+1} i \chi_{\beta} \\
& = if_{i+1}.
\end{align*}
The second equality comes from observing that, given $\alpha \models d$ with $\ell(\alpha) = i$, there are exactly $(d-i)$ compositions $\beta \geq \alpha$ with $i+1$ parts. Similarly, the fourth equality comes from observing that, given $\beta \models d$ with $i+1$ parts, there are exactly $i$ compositions $\alpha$ with $i$ parts such that $\alpha \leq \beta$.
\end{proof}

\section{Linearized combinatorial Hopf monoids}
\label{sec:combhopfmon}

In this section, we review the definition of linearized combinatorial Hopf monoids. First, we define the notion of Hopf monoid in the category of linear species.
A \emph{linear species} is a functor $\spe{F}: Set \to Vec$ from the category of finite sets with bijections to the category of finite dimensional vector spaces over a field $\mathbb{K}$ and linear transformations.

A \emph{Hopf monoid} is a Hopf monoid object in the category of linear species \cite{aguiar-mahajan-1}. We refer to \cite{aguiar-mahajan-2,aguiar-ardila} for more details.
For every pair of disjoint finite sets $M, N$, there are multiplication maps $\mu_{M,N}: \spe{H}[M] \otimes \spe{H}[N] \to \spe{H}[M \sqcup N]$ and comultiplication maps $\Delta_{M,N}: \spe{H}[M \sqcup N] \to \spe{H}[M] \otimes \spe{H}[N]$. We focus only on \emph{connected} species, where $\dim \spe{H}[\emptyset] = 1$. 
We let $\spe{x} \cdot \spe{y} = \mu_{M,N}(\spe{x} \otimes \spe{y}) $.

A \emph{set species} is an endofunctor $\spe{F}: Set \to Set$.
Given a set species $\spe{F}$, there is an associated linear species $\mathbb{K}\spe{F}$ called the \emph{linearization}: we define $(\mathbb{K}\spe{F})_N$ to be the vector space with basis $\spe{F}_N$. We refer to $\spe{f}$ as an $\spe{F}$-structure if there exists a finite set $N$ such that $\spe{f} \in \spe{F}_N$. 

Let $\spe{H}$ be a species.
We say that $\spe{H}$ is a \emph{linearized} Hopf monoid if $\mathbb{K}\spe{H}$ is a Hopf monoid and:
\begin{enumerate}
    \item For every pair of disjoint finite sets $M, N$, the multiplication map $\mu_{M,N}$ on $\mathbb{K}\spe{H}$ is linearized from a function $\mu_{M,N}: \spe{H}[M] \times \spe{H}[N] \to \spe{H}[M \sqcup N]$. This means that for every $\spe{x} \in \spe{H}[M], \spe{y} \in \spe{H}[N]$, we have $\spe{x} \cdot \spe{y} \in \spe{H}[M \sqcup N]$.
    \item For every pair of disjoint finite sets $M, N$, the comultiplication map $\Delta_{M,N}$ on $\mathbb{K}\spe{H}$ is linearized from a function $\Delta_{M,N}: \spe{H}[M \sqcup N] \to (\spe{H}[M] \times \spe{H}[N]) \sqcup \{0\}$. Thus, for every $\spe{x} \in \spe{H}[M \sqcup N]$, if $\Delta_{M,N}(\spe{x}) \neq 0$ then there exists $\spe{x}|_M \in \spe{H}[M]$ and $\spe{x}/M \in \spe{H}[N]$ with $\Delta_{M,N}(\spe{x}) = \spe{x}|_M \otimes \spe{x}/M$.

\end{enumerate}
Our notion of linearized Hopf monoid is more general than the usual one found in the literature, because we allow $\Delta_{M,N}(\spe{h}) = 0$ for $\spe{h} \in \spe{H}[M \sqcup N]$. The advantage is that we view the species of posets as a linearized Hopf monoid, despite the fact that the coproduct is sometimes zero.

\begin{example}
Given a finite set $N$, let $\spe{E}_N = \{1\}$. This gives rise to the \emph{exponential species}.
\end{example}

\begin{example}

Given a finite set $N$, we let $\spe{C}_N$ denote the collection of set compositions with ground set $N$. This forms the species of set compositions. Given a set decomposition $N = S \sqcup T$, if $C \models S$ is given by $C_1|\cdots|C_k$ and $C' \models T$ is given by $C'_1|\cdots|C'_r$, then their product $C \cdot C'$ is the set composition $C_1|\cdots|C_k|C'_1|\cdots|C_r$. 
Given a set composition $C \models M \sqcup N$, we let $C|_M = C_1 \cap M | C_2 \cap M | \cdots | C_k \cap M$ and $C / M = C_1 \cap N | \cdots | C_k \cap N$, where it is understood that we remove any empty blocks from the composition. 
For example, $\Delta_{\{1,3,5\}, \{2,4,6\}}(12|35|46) = 1|35 \otimes 2|46$.

Then $\spe{C}$ is a linearized Hopf monoid.
\end{example}

\begin{example}
Given a finite set $N$, let $\spe{G}[N]$ denote the collection of graphs with vertex set $N$. Given a bijection $\sigma:M \to N$, and a graph $\spe{g} \in \spe{G}[M]$, define $\spe{G}[\sigma](\spe{g})$ to be the graph on $N$ with edges $ij$ if and only if $\sigma^{-1}(i)\sigma^{-1}(j)$ is an edge of $\spe{g}$. Then this gives rise to the set species of graphs $\spe{G}$, which is a linearized Hopf monoid. The product is given by $\spe{g} \cdot \spe{h} = \spe{g} \sqcup \spe{h}$, the disjoint union of graphs. Given a graph $\spe{g}$, and $S \subseteq N$, $\spe{g}|_S$ is the induced subgraph on $S$, and $\spe{g} / S$ is the induced subgraph on $N - S$. 
\end{example}

\begin{example}
Given a finite set $N$, let $\spe{P}[N]$ denote all partial orders on $N$. Given a bijection $\sigma:M \to N$, and a partial order $\spe{p} \in \spe{P}[M]$, define $\spe{P}[\sigma](\spe{g})$ to be the partial order on $N$ given by $x \leq y$ if and only if $\sigma^{-1}(x) \leq_{\spe{p}} \sigma^{-1}(y)$. Then this gives rise to the set species of posets $\spe{P}$ which is a linearized Hopf monoid. The product is given by $\spe{p} \cdot \spe{q} = \spe{p} \sqcup \spe{q}$, the disjoint union of partial orders.  Given a partial order $\spe{p}$, and $S \subseteq N$, we define $\spe{p}|_S$ to be the induced subposet on $S$. 
Given $N = S \sqcup T$, and $\spe{p} \in \spe{P}[N]$, we define 
\[ \Delta_{S, T}(\spe{p}) = 
\begin{cases}
(\spe{p}|_S, \spe{p}|_T) & \text{if $S$ is an order ideal of $\spe{p}$} \\
0 & \text{ otherwise.}
\end{cases}
\]
\end{example}

\begin{example}
Given a finite set $N$, let $\spe{M}[N]$ denote the collection of matroids with ground set $N$. Given a bijection $\sigma:M \to N$, and a matroid $\spe{m} \in \spe{M}[M]$, define $\spe{M}[\sigma](\spe{m})$ to be the matroid on $N$ where a set $S$ is a basis if and only if $\sigma^{-1}(S)$ is a basis of $\spe{m}$. This gives rise to the set species of matroids $\spe{M}$, which forms a linearized Hopf monoid. The product is given by the direct sum operation. Given a matroid $\spe{m}$, and $S \subset N$, we define $\spe{m}|_S$ to be the restriction, and $\spe{m} / S$ to be the contraction of matroids. 
\end{example}

A natural transformation $\varphi: \spe{H} \to \mathbb{K}\spe{E}$ of linear species, where $\spe{H}$ is a Hopf monoid in species, is a \emph{character} if for all disjoint finite sets $M$ and $N$, and all $x \in \spe{H}[M]$ and all $y \in \spe{H}[N]$, we have $\varphi_M(x) \cdot \varphi_N(y) = \varphi_{M \sqcup N}(x \cdot y)$. By an abuse of notation, we will write $\varphi(\spe{h})$ in place of $\varphi_{N(\spe{h})}(\spe{h})$, when no confusion will arise.

Suppose that $\spe{H}$ is a linearized Hopf monoid with a character $\varphi: \mathbb{K}\spe{H} \to \mathbb{K}\spe{E}$.
We say that $\varphi$ is a \emph{linearized} character if $\varphi_N(\spe{h}) \in \{0,1\}$ for all $\spe{H}$-structures $\spe{h}\in \spe{H}[N]$.
 A \emph{linearized combinatorial Hopf monoid} is a linearized Hopf monoid $\spe{H}$ with a \emph{linearized} character $\varphi$. 
The motivation is that many combinatorial Hopf algebras are studied where the character $\varphi$ only takes on the values $0$ and $1$.

First, we mention two examples of characters that are defined for every Hopf monoid.
\begin{example}
Let $\spe{H}$ be a Hopf monoid. We define $\zeta(\spe{h}) = 1$ for every $\spe{H}$-structure $\spe{h}$. We refer to $\zeta$ as the \emph{zeta character}.
\end{example}

\begin{example}
Let $\spe{H}$ be a Hopf monoid. We say a $\spe{H}$-structure $\spe{h} \in \spe{H}[N]$ is \emph{totally reducible} if $|N| = 1$, or there exists a nontrivial decomposition $N = S \sqcup T$, and totally reducible elements $\spe{x} \in \spe{H}[S]$ and $\spe{y} \in \spe{H}[T]$ such that $\spe{h} = \spe{x} \cdot \spe{y}$. We define 
    \[ \chi(\spe{h}) = 
\begin{cases}
1 & \text{if $\spe{h}$ is totally reducible} \\
0 & \text{ otherwise.}
\end{cases}
\]
We call $\chi$ the \emph{chromatic} character, and $(\spe{H}, \chi)$ is always a linearized combinatorial Hopf monoid. For instance, if we let $\spe{H} = \spe{G}$, then a graph $\spe{g}$ is totally reducible if and only if it is edgeless. If we let $\spe{H} = \spe{P}$, then a poset $\spe{p}$ is totally reducible if and only if it is an antichain. Finally, if $\spe{H} = \spe{M}$, then a matroid $\spe{m}$ is totally reducible if and only if is a direct sum of loops and coloops, which means $\spe{m}$ has a unique basis. These characters were studied in context of Hopf algebras in \cite{aguiar-bergeron-sottile}, and in the context of Hopf monoids in \cite{aguiar-ardila}.

\end{example}

\subsection{Chromatic quasisymmetric class functions}
\label{subsec:classfun}

We review the definition of the $\varphi$-chromatic quasisymmetric function.  Given a Hopf monoid $\spe{H}$, a finite set $N$, and a set composition $C = C_1|\cdots|C_k \models N$, we define $\Delta_C = (\id_{C_1} \otimes \Delta_{C/C_1}) \circ \Delta_{C_1, C_2 \cup \cdots \cup C_k }$. Given a character $\varphi$, we define $\varphi_C = \left(\prod_{i=1}^{|C|} \varphi_{C_i} \right) \circ \Delta_C$. Given $\spe{h} \in \spe{H}[N]$, we define 
\[\Psi_{\spe{H}, \varphi}(\spe{h}, \mathbf{x}) = \sum_{C \models N} \varphi_C(\spe{h}) M_{\alpha(C)}.\]
A similar quasiymmetric function invariant was defined for combinatorial Hopf algebras by Aguiar, Bergeron, and Sottile \cite{aguiar-bergeron-sottile}. Let $\spe{H} = \spe{G}$, and let $\spe{g}$ be a graph on $N$. Then $\Delta_C(\spe{g})$ breaks $\spe{g}$ up into induced subgraphs $\spe{g}_i$, one for each block $C_i$ of $C$. We see that $\varphi_{C_i}(\spe{g}_i) = 1$ if $\spe{g}_i$ has no edges, and is $0$ otherwise. Hence the term corresponding to $C$ is non-zero only when each block induces an independent set, in which case the term is $M_{\alpha(C)}$. In this manner, we see that $\Psi_{\spe{G}, \varphi}(\spe{g}, \mathbf{x})$ is the chromatic symmetric function introduced by Stanley \cite{stanley-chromatic}.

Stanley's original definition involved a weighted sum over colorings. We showed in \cite{white-1} that a similar formula exists for all linearized combinatorial Hopf monoids.
 Let $f: N \to \mathbb{N}$ be a function. There are only finitely many colors $i$ such that $f^{-1}(i) \neq \emptyset$: let $i_1 < \cdots < i_k$ be those colors. We let $N_i$ be the set of vertices $v$ such that $f(v) \leq i$.
We call $f$ a proper $\varphi$-coloring of $\spe{h}$ if $\varphi(\spe{h}|_{N_{i+1}}/N_i) = 1$ for all $i$. Let $F_{\varphi}(\spe{h})$ be the set of $\varphi$-proper colorings. We proved the following in \cite{white-1}:
\begin{proposition}
Let $\spe{H}$ be a linearized combinatorial Hopf monoid, $N$ be a finite set, and $\spe{h} \in \spe{H}[N]$. 
\begin{displaymath} \Psi_{\spe{H}, \varphi}(\spe{h}, \mathbf{x}) = \sum_{f \in F_{\varphi}(\spe{h})} \prod_{n \in N} x_{f(n)}. \end{displaymath}
\label{prop:oldcoloring}
\end{proposition}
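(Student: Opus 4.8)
The plan is to expand the left-hand side in the monomial basis of $\mathbb{C}[[\mathbf{x}]]$ and regroup the right-hand side by fibers. Using $M_{\alpha(C)} = \sum_{i_1 < \cdots < i_{\ell(C)}} \prod_{j=1}^{\ell(C)} x_{i_j}^{|C_j|}$, we have
\[ \Psi_{\spe{H}, \varphi}(\spe{h}, \mathbf{x}) = \sum_{C \models N}\ \sum_{i_1 < \cdots < i_{\ell(C)}} \varphi_C(\spe{h})\, \prod_{j=1}^{\ell(C)} x_{i_j}^{|C_j|}. \]
On the other side, to each function $f : N \to \mathbb{N}$ associate its \emph{fiber composition} $C(f) \models N$, whose blocks are the nonempty fibers listed in increasing order of color: if $i_1 < \cdots < i_k$ are the colors used by $f$, then $C(f) = f^{-1}(i_1) | \cdots | f^{-1}(i_k)$ and $\prod_{n \in N} x_{f(n)} = \prod_{j=1}^k x_{i_j}^{|f^{-1}(i_j)|}$. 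The assignment $f \mapsto (C(f), (i_1 < \cdots < i_k))$ is a bijection from all functions $N \to \mathbb{N}$ onto pairs consisting of a set composition $C \models N$ together with a strictly increasing tuple of length $\ell(C)$. Comparing the two expressions term by term, the proposition reduces to the claim: for every $f$ with $C = C(f)$, we have $f \in F_{\varphi}(\spe{h})$ if and only if $\varphi_C(\spe{h}) = 1$. As a by-product this also shows that membership in $F_{\varphi}(\spe{h})$ depends only on $C(f)$, so the right-hand side is indeed quasisymmetric.

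To prove the claim I would unwind $\varphi_C = \left(\prod_{i=1}^{\ell(C)} \varphi_{C_i}\right) \circ \Delta_C$. Writing $N_{\leq j} = C_1 \cup \cdots \cup C_j$ (with $N_{\leq 0} = \emptyset$), coassociativity identifies the $j$-th tensor factor of $\Delta_C(\spe{h})$, whenever the iterated coproduct is nonzero, with $(\spe{h}|_{N_{\leq j}})/N_{\leq j-1} \in \spe{H}[C_j]$; hence $\varphi_C(\spe{h}) = 1$ exactly when all the intermediate coproducts are nonzero and $\varphi\big((\spe{h}|_{N_{\leq j}})/N_{\leq j-1}\big) = 1$ for every $j$. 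Now take $C = C(f)$ with used colors $i_1 < \cdots < i_k$. Then $N_{i_j} = N_{\leq j}$, and since $f$ uses no color strictly between $i_{j-1}$ and $i_j$ we also have $N_{i_j - 1} = N_{\leq j-1}$; thus the defining condition $\varphi(\spe{h}|_{N_{i+1}}/N_i) = 1$ of a proper $\varphi$-coloring, evaluated at $i = i_j - 1$, is precisely $\varphi\big((\spe{h}|_{N_{\leq j}})/N_{\leq j-1}\big) = 1$. For an index $i$ with $i+1$ not a used color one has $N_{i+1} = N_i$, so $(\spe{h}|_{N_{i+1}})/N_i$ is the unique element of the one-dimensional space $\spe{H}[\emptyset]$, on which $\varphi$ equals $1$ by connectedness, and these conditions are automatic. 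Matching the finitely many nontrivial conditions on each side yields the equivalence.

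The step I expect to be the main obstacle is the bookkeeping hidden in this last claim: one must verify that the implicit well-definedness requirements in "$f \in F_{\varphi}(\spe{h})$" — that each restriction $\spe{h}|_{N_{i+1}}$ and each quotient by $N_i$ actually exists, i.e., the relevant pairwise coproducts are nonzero — correspond exactly to $\Delta_C(\spe{h}) \neq 0$, and that the identification of the $j$-th factor of $\Delta_C(\spe{h})$ with $(\spe{h}|_{N_{\leq j}})/N_{\leq j-1}$ is legitimate. Both are a matter of applying coassociativity of $\spe{H}$ repeatedly, together with the counit axiom and connectedness for the "empty" conditions. Once the claim is established, the remainder is the routine term-by-term comparison described above.
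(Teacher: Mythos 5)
Your argument is correct and takes essentially the same route the paper sketches (with details deferred to \cite{white-1}): group colorings by their fiber composition $C(f)$ using $M_{\alpha(C)} = \sum_{f: C(f)=C} \mathbf{x}^f$, and reduce everything to the equivalence that $f$ is $\varphi$-proper exactly when $\varphi_{C(f)}(\spe{h}) = 1$. Your coassociativity identification of the tensor factors of $\Delta_C(\spe{h})$ with $(\spe{h}|_{N_{\leq j}})/N_{\leq j-1}$, plus the connectedness remark disposing of the conditions at unused colors, correctly fills in the step the paper leaves to the earlier reference.
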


We do not give the proof again, but we do discuss some of the main ideas. 
Let $f: N \to \mathbb{N}$. Let $i_1 < i_2 < \cdots < i_k$ be the natural numbers for which $f^{-1}(i_j) \neq \emptyset$. Define $C(f) = f^{-1}(i_1)|f^{-1}(i_2)|\cdots|f^{-1}(i_k)$. This is the composition associated with $f$. If we let $C(f) = C_1|\cdots|C_k$, then $\mathbf{x}^f = \prod_{j=1}^k x_{i_j}^{|C_j|}$. Conversely, given $i_1 < \cdots < i_k$, we see that $\prod_{j=1}^k x_{i_j}^{|C_j|}$ is the weight of the unique function defined by sending the vertices of $C_j$ to $i_j$. Given any composition $C \models N$, it follows that \[M_{\alpha(C)} = \sum_{f: C(f) = C} \mathbf{x}^f. \] 

We say set composition $C$ is $\varphi$-proper if $\varphi_C(\spe{h})=1$. We let $\mathcal{C}_{\varphi}(\spe{h})$ be the set of $\varphi$-proper set compositions. To prove Proposition \ref{prop:oldcoloring}, we showed that \[\Psi_{\spe{H}, \varphi}(\spe{h}, \mathbf{x}) = \sum_{C \in \mathcal{C}_{\varphi}(\spe{h})} M_{\alpha(C)}. \]
Then we showed that $f$ is $\varphi$-proper if and only if $C(f)$ is $\varphi$-proper. It then follows that \[\Psi_{\spe{H}, \varphi}(\spe{h}, \mathbf{x}) = \sum_{C \in \mathcal{C}_{\varphi}} M_{\alpha(C)} = \sum_{C \in \mathcal{C}_{\varphi}} \sum_{f: C(f)= C} \mathbf{x}^f = \sum_{f \in F_{\varphi}(\spe{h})} \mathbf{x}^f. \]

\begin{examples}
Let $\spe{G}$ be the Hopf monoid of graphs with character $\chi$. Then, as we noted, given a graph $\spe{g}$, $f \in F_{\varphi}$ is a proper coloring of $\spe{g}$, and $\Psi_{\varphi, \chi}(\spe{g}, \mathbf{x})$ is the chromatic symmetric function.

Let $\spe{P}$ be the Hopf monoid of posets, with character $\zeta$. Given a poset $\spe{p}$, a function $f \in F_{\zeta}$ is an order-preserving map $f: \spe{p} \to \mathbb{N}$, also known as a $\spe{p}$-partition, and $\Psi_{\spe{P}, \zeta}(\spe{p}, \mathbf{x})$ is a quasisymmetric function which enumerates $P$-partitions, originally introduced by Gessel \cite{gessel}.

Finally, let $\spe{M}$ be the Hopf monoid of matroids, with character $\chi$. Given a matroid $\spe{m}$, Billera, Jia, and Reiner showed that $f \in F_{\chi}$ if and only if $f$ is minimized on a unique basis of $\spe{m}$. These functions are called $\spe{m}$-generic functions, and the quasisymmetric function $\Psi_{\spe{M}, \chi}(\spe{m}, \mathbf{x})$ enumerating the $\spe{m}$-generic functions was first introduced by Billera, Jia, and Reiner \cite{billera-jia-reiner}. 
\end{examples}

Now we define automorphism groups.
Let $\spe{H}$ be a linearized combinatorial Hopf monoid with character $\zeta$. Let $N$ be a finite set, and for $\mathfrak{g} \in \mathfrak{S}_N$ and $\spe{h} \in \spe{H}[N]$, let $\mathfrak{g} \cdot \spe{h} = \spe{H}[\mathfrak{g}](\spe{h})$. This defines a group action of $\mathfrak{S}_N$ on $\spe{H}[N].$ Given $\spe{h} \in \spe{H}[N]$, we say $\mathfrak{g}$ is an \emph{automorphism} of $\spe{h}$ if $\mathfrak{g}\cdot \spe{h} = \spe{h}$. Let $\Aut(\spe{h})$ denote the set of automorphisms of $\spe{h}$, which is a subgroup of $\mathfrak{S}_N$. 

As an example, let $\spe{C}$ be the species of set compositions. Let $C = C_1|\cdots|C_k$ be a set composition on a finite set $N$. Given a permutation $\mathfrak{g} \in \mathfrak{S}_N$, we see that $\mathfrak{g}C = \mathfrak{g}(C_1)| \mathfrak{g}(C_2) | \cdots | \mathfrak{g}(C_k).$ Then $\Aut(C) = \{\mathfrak{g}: \mathfrak{g}C = C \} = \mathfrak{S}_{C_1} \times \mathfrak{S}_{C_2} \times \cdots \times \mathfrak{S}_{C_k}$.

Now we introduce our new quasisymmetric class function invariants.
Fix a finite set $N$. 
Let $\spe{h} \in \spe{H}[N]$ and let $\mathfrak{G}$ be a subgroup of $\Aut(\spe{h})$.
For $\mathfrak{g} \in \mathfrak{G}$, define \[\chiexevaluate{H}{\spe{h}} = \sum_{f \in \Fix_{\mathfrak{g}}(F_{\varphi}(\spe{h}))} \prod_{v \in N} x_{f(v)}.\]
This is the \emph{$\varphi$-chromatic quasisymmetric class function} associated to $\spe{H}$ with respect to $\spe{h}$. In Theorem \ref{thm:coloring}, we show that it is a quasisymmetric class function.

Naturally, there is a \emph{$\varphi$-chromatic polynomial class function} given by letting $\Psi_{\spe{H}, \varphi}(\spe{h}, \mathfrak{G}, x)$ denote the number of functions $f \in \Fix_{\mathfrak{g}}(F_{\varphi}(\spe{h}))$ with $f(N) \subseteq [x]$.

\begin{example}
Consider the linearized combinatorial Hopf monoid $(\spe{P}, \zeta)$, and let $\spe{p}$ be a partial order on a finite set $N$, and let $\mathfrak{G} \subseteq \Aut(\spe{p})$. Then $\mathfrak{G}$ acts on the set of $\spe{p}$-partitions. Moreover, for $\mathfrak{g} \in \mathfrak{G}$, we see that \[\Psi_{\spe{P}, \zeta}(\spe{p}, \mathfrak{G}, \mathbf{x}; \mathfrak{g}) = \sum_{\sigma: \mathfrak{g}\sigma = \sigma} \mathbf{x}^{\sigma} \]
where the sum is over $\spe{p}$-partitions that are fixed by $\mathfrak{g}$. 

Let $\spe{p}$ be the poset in Figure \ref{fig:poset}. Let $\mathbb{Z}/2\mathbb{Z}$ act on $\spe{p}$ by swapping $a$ with $c$ and $b$ with $d$. We let $\rho$ denote the regular representation.
Then \begin{align*}\Psi_{\spe{P}, \zeta}(\spe{p}, \mathbb{Z}/2\mathbb{Z}, \mathbf{x}) = M_4 & + \rho (M_{3,1}+M_{1,3})+M_{2,2} \\ & +\rho(M_{1,1,2}+2M_{1,2,1}+M_{2,1,1}+2 M_{1,1,1,1}).\end{align*}
\end{example}

\begin{figure}
\begin{center}

\begin{tikzpicture}
  \node[circle, draw=black, fill=white] (b) at (0,2) {$a$};
  \node[circle, draw=black, fill=white] (a) at (0,0) {$b$};
  \node[circle, draw=black, fill=white] (c) at (2,0) {$d$};
  \node[circle, draw=black, fill=white] (d) at (2,2) {$c$};
 \draw[-Latex] (b) -- (a);
  \draw[-Latex] (d) edge (c);
  \draw[-Latex] (b) -- (c);
    \draw[-Latex] (d) -- (a);

\end{tikzpicture}

\end{center}
\caption{A poset.}
\label{fig:poset}
\end{figure}

\begin{example}
Consider the linearized combinatorial Hopf monoid $(\spe{G}, \chi)$, and let $\spe{g}$ be a graph on a finite set $N$, and fix $\mathfrak{G} \subseteq \Aut(\spe{g})$. Then $\Psi_{\spe{G}, \chi}(\spe{g}, \mathfrak{G}, \mathbf{x})$ is the same quasisymmetric class function mentioned in the introduction in Theorem \ref{thm:maingraph}.

Let $\spe{g}$ be the graph in Figure \ref{fig:graph}. Let $\mathbb{Z}/4\mathbb{Z}$ act on $\spe{g}$ by cyclic rotation. We let $\rho$ denote the regular representation, and $\sgn$ denote the sign representation.
Then \[\Psi_{\spe{G}, \chi}(\spe{g}, \mathbb{Z}/4\mathbb{Z}, \mathbf{x}) =  (1+\sgn)M_{2,2}+\rho(M_{1,1,2}+M_{1,2,1}+M_{2,1,1}+6 M_{1,1,1,1}).\]
\end{example}

\begin{example}
Consider the linearized combinatorial Hopf monoid $(\spe{M}, \chi)$, and let $\spe{m}$ be a matroid on a finite set $N$, and fix $\mathfrak{G} \subseteq \Aut(\spe{m})$. For $\mathfrak{g} \in \mathfrak{G}$, we have \[\Psi_{\spe{M}, \chi}(\spe{m}, \mathfrak{G}, \mathbf{x}; \mathfrak{g}) = \sum_{f: \mathfrak{g}f = f} \mathbf{x}^f \]
where the sum is over $\spe{m}$-generic functions that are fixed by $\mathfrak{g}$.
We observe that $\Psi_{\spe{M}, \chi}(\spe{m}, \{e\}, \mathbf{x})$ is the Billera-Jia-Reiner quasisymmetric function associated to a matroid.

Let $\spe{m}$ be the uniform matroid on four elements of rank two. That is, $\spe{m}$ have vertices $\{0, 1, 2, 3 \}$, and every subset of size two is a basis. Let $\mathbb{Z}/4\mathbb{Z}$ act on $\spe{m}$ by cyclic rotation of the vertices. We let $\rho$ denote the regular representation, and $\sgn$ denote the sign representation.
Then \[\Psi_{\spe{M}, \chi}(\spe{m}, \mathbb{Z}/4\mathbb{Z}, \mathbf{x}) =  (\rho+1+\sgn)M_{2,2}+3\rho(M_{1,1,2}+M_{2,1,1}+6 M_{1,1,1,1}).\]

\end{example}

We now describe $\Psi_{\spe{H}, \varphi}(\spe{h}, \mathfrak{G}, \mathbf{x})$ in terms of the basis of monomial quasisymmetric functions.
  We say a set composition is $\varphi$-proper if $\varphi_C(\spe{h}) = 1$. Let $\mathcal{C}_{\varphi}(\spe{h})$ be the set of $\varphi$-proper set compositions, and note that $\mathcal{C}_{\varphi}(\spe{h})$ is a $\mathfrak{G}$-set. 

Given $\alpha \models |N|$, define \[\mathcal{C}_{\spe{h}, \alpha} = \{C: C \models N, \alpha(C) = \alpha, \varphi_C(\spe{h}) = 1 \}.\] Given $\mathfrak{g} \in \mathfrak{G}$, and $C \in \mathcal{C}_{\spe{h}, \alpha}$, we have $\mathfrak{g} \cdot C \in \mathcal{C}_{\spe{h}, \alpha}$. Hence $\mathfrak{G}$ acts on $\mathcal{C}_{\spe{h}, \alpha}$. If we take the span of $\mathcal{C}_{\spe{h}, \alpha}$ over $\mathbb{K}$ we get a $\mathfrak{G}$-module $V_{\spe{h}, \alpha}$. We let $\Psi_{\spe{h}, \alpha}$ denote the resulting permutation character.
\begin{figure}
\begin{center}
\begin{tikzpicture}
  \node[circle, draw=black, fill=white] (b) at (0,2) {$a$};
  \node[circle, draw=black, fill=white] (a) at (0,0) {$b$};
  \node[circle, draw=black, fill=white] (c) at (2,0) {$d$};
  \node[circle, draw=black, fill=white] (d) at (2,2) {$c$};
 \draw[-] (b) -- (a);
  \draw[-] (d) -- (c);
  \draw[-] (b) -- (d);
    \draw[-] (c) -- (a);

\end{tikzpicture}

\caption{A graph.}
\end{center}
\label{fig:graph}
\end{figure}

\begin{theorem}
Let $\spe{H}$ be a linearized combinatorial Hopf monoid. Fix a finite set $N$, a $\spe{H}$-structure $\spe{h} \in \spe{H}[N]$, and a group $\mathfrak{G} \subseteq \Aut(\spe{h})$.
Then we have the following identities:
\begin{enumerate} 
\item \begin{equation}\chievaluate =  \sum_{C \in \mathcal{C}_{\varphi}(\spe{h}): \mathfrak{g}C = C} M_{\alpha(C)}  \label{eq:fundamental} \end{equation}

\item \begin{equation} \Psi_{\spe{H}, \varphi}(\spe{h}, \mathfrak{G}, \mathbf{x}) = \sum_{\alpha \models |N|} \Psi_{\spe{h}, \alpha} M_{\alpha}. \end{equation}
\item \begin{equation}\Psi_{\spe{H}, \varphi}(\spe{h}, \mathfrak{G}, x) = \sum_{\alpha \models |N|} \Psi_{\spe{h}, \alpha}\binom{x}{|\alpha|}.\end{equation}
\end{enumerate}

\label{thm:coloring}
\end{theorem}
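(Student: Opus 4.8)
The plan is to establish (1) by an explicit bijective argument, and then to obtain (2) and (3) as essentially formal consequences. For (1) I would use three facts. First, for $\mathfrak{g} \in \mathfrak{S}_N$ and a set composition $C = C_1|\cdots|C_k \models N$, one has $\mathfrak{g}C = C$ if and only if $\mathfrak{g}(C_i) = C_i$ for every $i$, because the blocks of a set composition are linearly ordered. Second, $C(\mathfrak{g}f) = \mathfrak{g}(C(f))$ for every $f \colon N \to \mathbb{N}$, since $(\mathfrak{g}f)^{-1}(i) = \mathfrak{g}\bigl(f^{-1}(i)\bigr)$ for each color $i$. Third, from the proof of Proposition \ref{prop:oldcoloring}, $f$ is $\varphi$-proper if and only if $C(f)$ is, and $\mathcal{C}_{\varphi}(\spe{h})$ is $\mathfrak{G}$-stable. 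These combine to give
\[ \Fix_{\mathfrak{g}}(F_{\varphi}(\spe{h})) \;=\; \bigsqcup_{\substack{C \in \mathcal{C}_{\varphi}(\spe{h})\\ \mathfrak{g}C = C}} \{\, f \colon C(f) = C \,\}. \]
Indeed, for $\subseteq$, if $\mathfrak{g}f = f$ and $f \in F_{\varphi}(\spe{h})$ then $C := C(f)$ is $\varphi$-proper and $\mathfrak{g}C = C(\mathfrak{g}f) = C$; for $\supseteq$, a function $f$ with $C(f) = C$ is constant on each block of $C$, so if $\mathfrak{g}$ fixes each block of $C$ setwise then $\mathfrak{g}f = f$, and $f$ is $\varphi$-proper because $C$ is. Substituting this decomposition into the definition of $\chievaluate$ and applying the identity $M_{\alpha(C)} = \sum_{f \colon C(f) = C}\mathbf{x}^f$ from the discussion following Proposition \ref{prop:oldcoloring} yields (1). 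In particular $\chievaluate$ is a nonnegative integer combination of monomial quasisymmetric functions, hence quasisymmetric, with $\mathbb{C}$-valued class functions as its $M_\alpha$-coefficients, so $\Psi_{\spe{H},\varphi}(\spe{h},\mathfrak{G},\mathbf{x})$ is indeed a quasisymmetric class function.

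For (2), I would group the right-hand side of (1) according to the type $\alpha = \alpha(C)$. The $M_\alpha$-coefficient of $\chievaluate$ is then $\bigl|\{C \in \mathcal{C}_{\spe{h},\alpha}: \mathfrak{g}C = C\}\bigr| = \bigl|\Fix_{\mathfrak{g}}(\mathcal{C}_{\spe{h},\alpha})\bigr|$. Since $V_{\spe{h},\alpha}$ is by definition the permutation module on $\mathcal{C}_{\spe{h},\alpha}$, and the character of a permutation representation at $\mathfrak{g}$ equals the number of points of the underlying set fixed by $\mathfrak{g}$, we get $\Psi_{\spe{h},\alpha}(\mathfrak{g}) = \bigl|\Fix_{\mathfrak{g}}(\mathcal{C}_{\spe{h},\alpha})\bigr|$. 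As $\mathfrak{g} \in \mathfrak{G}$ is arbitrary, this is the desired identity of $\qsym$-valued class functions.

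For (3), the cleanest route is to apply principal specialization to (2): by Proposition \ref{prop:global2}(1), $\ps\bigl(\Psi_{\spe{H},\varphi}(\spe{h},\mathfrak{G},\mathbf{x})\bigr) = \sum_{\alpha \models |N|}\Psi_{\spe{h},\alpha}\binom{x}{|\alpha|}$, so it remains to identify $\ps\bigl(\Psi_{\spe{H},\varphi}(\spe{h},\mathfrak{G},\mathbf{x})\bigr)$ with the orbital polynomial class function $\Psi_{\spe{H},\varphi}(\spe{h},\mathfrak{G},x)$. That is immediate from the definitions: setting $x_i = 1$ for $i \le x$ and $x_i = 0$ for $i > x$ in $\chievaluate = \sum_{f \in \Fix_{\mathfrak{g}}(F_{\varphi}(\spe{h}))}\mathbf{x}^f$ counts exactly the fixed $\varphi$-proper colorings $f$ with $f(N) \subseteq [x]$, which is the definition of $\Psi_{\spe{H},\varphi}(\spe{h},\mathfrak{G},x;\mathfrak{g})$. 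Alternatively one argues directly from the disjoint-union decomposition: the $f$ with $C(f) = C$ and $f(N) \subseteq [x]$ biject with strictly increasing sequences of $|\alpha(C)|$ colors from $[x]$, so there are $\binom{x}{|\alpha(C)|}$ of them, and summing over $C$ and then over $\alpha$ reproduces the right-hand side.

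I do not expect a genuine obstacle here; the one point that needs care is the equivariance bookkeeping — verifying $C(\mathfrak{g}f) = \mathfrak{g}(C(f))$, checking that $\mathcal{C}_{\varphi}(\spe{h})$ and each $\mathcal{C}_{\spe{h},\alpha}$ are $\mathfrak{G}$-stable (this rests on the naturality of $\varphi$ and of $\Delta_C$ together with $\mathfrak{g} \in \Aut(\spe{h})$), and confirming both inclusions in the disjoint-union decomposition above.
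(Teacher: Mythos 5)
Your proposal is correct and follows essentially the same route as the paper: identify fixed proper colorings with fixed $\varphi$-proper set compositions via $f \mapsto C(f)$ and use $M_{\alpha(C)} = \sum_{f\colon C(f)=C}\mathbf{x}^f$ for (1), group by type $\alpha$ and read off the permutation character as a fixed-point count for (2), and identify the polynomial class function with the principal specialization (using $\ps M_{\alpha} = \binom{x}{\ell(\alpha)}$) for (3). The equivariance checks you flag ($C(\mathfrak{g}f)=\mathfrak{g}C(f)$, stability of $\mathcal{C}_{\spe{h},\alpha}$) are exactly the points the paper also relies on, so there is no gap.
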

\begin{proof}
Fix $\spe{H}, N$, $\spe{h}$, and $\mathfrak{G}$ as in the statement of the theorem.

For the first formula, fix $\mathfrak{g} \in \mathfrak{G}$ and let $f \in \Fix_{\mathfrak{g}}(F_{\varphi}(\spe{h}))$. 
 We see that $\mathfrak{g}f = f$ if and only if for all $v \in N$, we have $f(\mathfrak{g}v) = f(v)$. This is equivalent to requiring $\mathfrak{g}f^{-1}(i_j) = f^{-1}(i_j)$ for all $j$, which means that $\mathfrak{g}C(f) = C(f)$.
 
Thus we obtain
\[ \sum\limits_{C \in \Fix_{\mathfrak{g}}(\mathcal{C}_{\varphi}(\spe{h}))} M_{\alpha(C)} = \sum\limits_{C \in \Fix_{\mathfrak{g}}(\mathcal{C}_{\varphi}(\spe{h}))} \sum\limits_{f: C(f) = f} \mathbf{x}^f =  \sum\limits_{f: C(f) \in \Fix_{\mathfrak{g}}(\mathcal{C}_{\varphi}(\spe{h}))} \mathbf{x}^f.\] 
The first equality uses the relation between functions an their compositions. We know that $\mathfrak{g}C(f) = C(f)$ if and only $\mathfrak{g}f = f$, and $C(f) \in \mathcal{C}_{\varphi}(\spe{h})$ if and only if $f \in F_{\varphi}(\spe{h})$. Thus we have \[\sum\limits_{C \in \Fix_{\mathfrak{g}}(\mathcal{C}_{\varphi}(\spe{h}))} M_{\alpha(C)} = \sum_{f \in \Fix_{\mathfrak{g}}(F_{\varphi}(\spe{h}))} \mathbf{x}^f = \Psi_{\spe{H}, \varphi}(\spe{h}, \mathfrak{G}, \mathbf{x}; \mathfrak{g}).\]

To prove the second identity, we see that
\[ \sum\limits_{C \in \Fix_{\mathfrak{g}}(\mathcal{C}_{\varphi}(\spe{h}))} M_{\alpha(C)} = \sum\limits_{\alpha \models |N|} \sum\limits_{C \in \Fix_{\mathfrak{g}}(\mathcal{C}_{\spe{h}, \alpha})} M_{\alpha} = \sum\limits_{\alpha \models |N|} \Psi_{\spe{h}, \alpha}(\mathfrak{g}) M_{\alpha}.\]

Now we prove the third identity. Let $x \in \mathbb{N}$, and $\mathfrak{g} \in \mathfrak{G}$. By definition, $\Psi_{\spe{H}, \varphi}(\spe{h}, \mathfrak{G}, x; \mathfrak{g})$ counts the number of $\varphi$-proper colorings $f$ with $f(N) \subseteq [x]$ and $\mathfrak{g}f = f$. However, we see that $\ps(\Psi_{\spe{H}, \varphi}(\spe{h}, \mathfrak{G}, \mathbf{x}); \mathfrak{g})(x)$ also counts this same set of colorings (because setting $x_i = 0$ for $i > x$ reduces the summation to only those colorings where $f(N) \subseteq [x]$, and setting $x_i = 1$ for $i \leq n$ counts each function with weight $1$).

Thus $\Psi_{\spe{H}, \varphi}(\spe{h}, \mathfrak{G}, x) = \ps(\Psi_{\spe{H}, \varphi}(\spe{h}, \mathfrak{G}, \mathbf{x}))$. 
The third formula follows immediately from principal specialization, and the fact that $\ps M_{\alpha} = \binom{x}{\ell(\alpha)}$.

\end{proof}

\subsection{Orbital Chromatic Quasisymmetric Function}
\label{subsec:orbital}

Now we discuss our orbital invariants. Fix a linearized combinatorial Hopf monoid $\spe{H}$, and a $\spe{H}$-structure $\spe{h}\in\spe{H}[N]$ where $N$ is a finite set. Finally, let $\mathfrak{G} \subseteq \Aut(\spe{h})$. Given two $\varphi$-proper colorings $f$ and $g$ of $\spe{h}$, we say $f$ and $g$ are $\mathfrak{G}$-equivalent if $f = g \circ \mathfrak{g}^{-1}$ for some $\mathfrak{g} \in \mathfrak{G}$. We see that if $f$ and $g$ are $\mathfrak{G}$-equivalent, then $\mathbf{x}^f = \mathbf{x}^g$. Let $F_{\varphi}/\mathfrak{G}$ denote the set of equivalence classes. For $C \in F_{\varphi}/\mathfrak{G}$, we define $\mathbf{x}^C = \mathbf{x}^f$ for any $f \in C$.

Then we define the \emph{orbital $\varphi$-chromatic quasisymmetric function} to be \[\Psi_{\spe{H}, \varphi}^O(\spe{h}, \mathfrak{G}, \mathbf{x}) = \sum_{C \in F_{\varphi}/\mathfrak{G}} \mathbf{x}^C. \]

Similarly, if we let $F_{\varphi, k}(\spe{h})$ be the set of $\varphi$-colorings $f$ such that $f(N) \subseteq [k]$, then $\mathfrak{G}$ acts on $F_{\varphi, k}(\spe{h})$, and we let $\Psi_{\spe{H}, \varphi}^O(\spe{h}, \mathfrak{G}, x) = |F_{\varphi, k}(\spe{h})/\mathfrak{G}|$. This is the \emph{orbital $\varphi$-chromatic polynomial}.

We observe that $\Psi_{\spe{P}, \zeta}^O(\spe{p}, \mathfrak{G}, x)$ is the orbital order polynomial introduced by Jochemko \cite{jochemko}.
We observe that $\Psi_{\spe{G}, \chi}^O(\spe{g}, \mathfrak{G}, x)$ is the orbital chromatic polynomial introduced by Cameron and Kayibi \cite{cameron-kayibi}.
For matroids, $\Psi_{\spe{M}, \chi}^O(\spe{m}, \mathfrak{G}, x)$ is a new invariant, forming an orbital version of the Billera-Jia-Reiner polynomial.

\begin{theorem}
Let $\spe{H}$ be a linearized combinatorial Hopf monoid. Fix a finite set $N$, a $\spe{H}$-structure $\spe{h} \in \spe{H}[N]$, and a group $\mathfrak{G} \subseteq \Aut(\spe{h})$.
Then we have the following identities:
\begin{enumerate} 

\item \begin{equation} \Psi^O_{\spe{H}, \varphi}(\spe{h}, \mathfrak{G}, \mathbf{x}) = \frac{1}{|\mathfrak{G}|} \sum_{\mathfrak{g} \in \mathfrak{G}} \Psi_{\spe{H}, \varphi}(\spe{h},\mathfrak{G}, \mathbf{x};\mathfrak{g}) \end{equation}
\item \begin{equation} [M_{\alpha}]\Psi^O_{\spe{H}, \varphi}(\spe{h}, \mathfrak{G}, \mathbf{x}) = |\mathcal{C}_{\varphi, \alpha}(\spe{h})/\mathfrak{G}|. \end{equation}
\item \begin{equation}\Psi^O_{\spe{H}, \varphi}(\spe{h}, \mathfrak{G}, x) = \ps (\Psi^O_{\spe{H}, \varphi}(\spe{h},\mathfrak{G}, \mathbf{x})) = \frac{1}{|\mathfrak{G}|}\sum_{\mathfrak{g} \in \mathfrak{G}} \Psi_{\spe{H}, \varphi}(\spe{h},\mathfrak{G}, x;\mathfrak{g}).\end{equation}
\end{enumerate}

\label{thm:coloring2}
\end{theorem}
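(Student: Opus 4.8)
The plan is to recognize all three identities as instances of the Cauchy--Frobenius (Burnside) orbit-counting lemma, applied coefficientwise to the formal power series in play.

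For the first identity I would expand the right-hand side rather than the left. Since $\Psi_{\spe{H}, \varphi}(\spe{h}, \mathfrak{G}, \mathbf{x}; \mathfrak{g}) = \sum_{f \in \Fix_{\mathfrak{g}}(F_{\varphi}(\spe{h}))} \mathbf{x}^f$, interchanging the two (coefficientwise finite) summations gives
\[
\frac{1}{|\mathfrak{G}|}\sum_{\mathfrak{g} \in \mathfrak{G}} \Psi_{\spe{H}, \varphi}(\spe{h}, \mathfrak{G}, \mathbf{x}; \mathfrak{g}) = \frac{1}{|\mathfrak{G}|} \sum_{f \in F_{\varphi}(\spe{h})} |\Stab_{\mathfrak{G}}(f)|\,\mathbf{x}^f = \sum_{f \in F_{\varphi}(\spe{h})} \frac{\mathbf{x}^f}{|\mathfrak{G}(f)|},
\]
using the orbit--stabilizer theorem. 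The one bookkeeping point is that $\mathbf{x}^f$ is constant on each $\mathfrak{G}$-orbit (since $\mathfrak{g}$ merely permutes $N$, so $\mathbf{x}^{\mathfrak{g} f} = \mathbf{x}^f$); hence $\mathbf{x}^C$ is well defined, and grouping the last sum by orbits collapses each orbit's contribution to $|\mathfrak{G}(f)| \cdot \mathbf{x}^f/|\mathfrak{G}(f)| = \mathbf{x}^C$, recovering $\Psi^O_{\spe{H}, \varphi}(\spe{h}, \mathfrak{G}, \mathbf{x}) = \sum_{C \in F_\varphi/\mathfrak{G}} \mathbf{x}^C$.

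For the second identity I would combine the first with Theorem \ref{thm:coloring}. Taking $[M_\alpha]$ of both sides of (1) and using $\Psi_{\spe{H}, \varphi}(\spe{h}, \mathfrak{G}, \mathbf{x}; \mathfrak{g}) = \sum_{\alpha} \Psi_{\spe{h}, \alpha}(\mathfrak{g})\, M_\alpha$ yields $[M_\alpha]\Psi^O_{\spe{H}, \varphi}(\spe{h}, \mathfrak{G}, \mathbf{x}) = \frac{1}{|\mathfrak{G}|}\sum_{\mathfrak{g} \in \mathfrak{G}} \Psi_{\spe{h}, \alpha}(\mathfrak{g})$. Since $\Psi_{\spe{h}, \alpha}$ is the permutation character of the $\mathfrak{G}$-set $\mathcal{C}_{\varphi, \alpha}(\spe{h})$, we have $\Psi_{\spe{h}, \alpha}(\mathfrak{g}) = |\Fix_{\mathfrak{g}}(\mathcal{C}_{\varphi, \alpha}(\spe{h}))|$, and Burnside's lemma again identifies the average with $|\mathcal{C}_{\varphi, \alpha}(\spe{h})/\mathfrak{G}|$.

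For the third identity I would treat the two equalities in turn. For $\Psi^O_{\spe{H}, \varphi}(\spe{h}, \mathfrak{G}, x) = \ps(\Psi^O_{\spe{H}, \varphi}(\spe{h}, \mathfrak{G}, \mathbf{x}))$: the set of colors appearing in $f$ is also constant on each $\mathfrak{G}$-orbit, so principal specialization sends $\mathbf{x}^C$ to $1$ when that common color set lies in $[x]$ and to $0$ otherwise; hence $\ps(\Psi^O_{\spe{H}, \varphi}(\spe{h}, \mathfrak{G}, \mathbf{x}))(x)$ counts precisely the $\mathfrak{G}$-orbits of colorings landing in $[x]$, i.e.\ $|F_{\varphi, x}(\spe{h})/\mathfrak{G}|$, which is by definition $\Psi^O_{\spe{H}, \varphi}(\spe{h}, \mathfrak{G}, x)$. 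For the remaining equality I would apply the linear map $\ps$ to identity (1) together with $\ps(\Psi_{\spe{H}, \varphi}(\spe{h}, \mathfrak{G}, \mathbf{x}; \mathfrak{g})) = \Psi_{\spe{H}, \varphi}(\spe{h}, \mathfrak{G}, x; \mathfrak{g})$, which is Theorem \ref{thm:coloring}(3) read at a fixed $\mathfrak{g}$; alternatively it is Burnside applied directly to the $\mathfrak{G}$-set $F_{\varphi, x}(\spe{h})$, since $\Psi_{\spe{H}, \varphi}(\spe{h}, \mathfrak{G}, x; \mathfrak{g}) = |\Fix_{\mathfrak{g}}(F_{\varphi, x}(\spe{h}))|$ by definition. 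None of this presents a genuine obstacle: the theorem is a threefold use of the orbit-counting lemma, and the only things requiring care are the harmless interchange of the sum over $\mathfrak{g}$ with the sum over colorings and the repeated use of the $\mathfrak{G}$-invariance of both $\mathbf{x}^f$ and the color set of $f$.
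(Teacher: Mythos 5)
Your proof is correct and follows essentially the same route as the paper: all three identities reduce to Burnside's lemma, with (2) and (3) handled exactly as in the paper's proof. The only cosmetic difference is in (1), where the paper applies Burnside coefficientwise to the finite sets $F_{(i_1, \ldots, i_k), \alpha}(\spe{h})$, while you apply the orbit--stabilizer weighting directly to the series and then group by orbits --- the same counting argument, packaged globally rather than monomial by monomial.
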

\begin{proof}
Fix $\spe{H}$, $N$, $\spe{h}$ and $\mathfrak{G}$ as in the statement of the Theorem. We first prove the first formula by comparing coefficients on both sides.
Let $i_1 < i_2 < \cdots < i_k$, and $\alpha \models |N|$. Let $F_{(i_1, \ldots, i_k), \alpha}(\spe{g})$ denote the set of $\varphi$-proper colorings $f:N \to \mathbb{N}$ such that 
\[|f^{-1}(i)| = \begin{cases} \alpha_j & i = i_j \mbox{ for some } j \\ 0 & \mbox{ otherwise} \end{cases} \]
Then $\mathfrak{G}$ acts on $F_{(i_1, \ldots, i_k), \alpha}(\spe{g})$, and
 \begin{align*} [\prod_{j=1}^k x_{i_j}^{\alpha_j}] \Psi_{\spe{H}, \varphi}^O(\spe{h}, \mathfrak{G}, \mathbf{x}) & = |F_{(i_1, \ldots, i_k), \alpha}(\spe{g})/\mathfrak{G}| \\ & = \frac{1}{|\mathfrak{G}|} \sum_{\mathfrak{g} \in \mathfrak{G}} |\Fix_{\mathfrak{g}}(F_{(i_1, \ldots, i_k), \alpha})| \\
 & = \frac{1}{|\mathfrak{G}|} \sum_{\mathfrak{g} \in \mathfrak{G}} [\prod_{j=1}^k x_{i_j}^{\alpha_j}] \Psi_{\spe{H}, \varphi}(\spe{h}, \mathfrak{G}, \mathbf{x}; \mathfrak{g}) \\
  & = [\prod_{j=1}^k x_{i_j}^{\alpha_j}] \frac{1}{|\mathfrak{G}|} \sum_{\mathfrak{g} \in \mathfrak{G}}  \Psi_{\spe{H}, \varphi}(\spe{h}, \mathfrak{G}, \mathbf{x}; \mathfrak{g})  \end{align*}
where the second equality uses Burnside's Lemma.

To prove the second formula, observe that
\begin{align*} [M_{\alpha}] \Psi_{\spe{H}, \varphi}^O(\spe{h}, \mathfrak{G}, \mathbf{x}) 
 & = \frac{1}{|\mathfrak{G}|} \sum_{\mathfrak{g} \in \mathfrak{G}} [M_\alpha] \Psi_{\spe{H}, \varphi}(\spe{h}, \mathfrak{G}, \mathbf{x}; \mathfrak{g}) \\
 & = \frac{1}{|\mathfrak{G}|} \sum_{\mathfrak{g} \in \mathfrak{G}} \Psi_{\spe{h}, \alpha}(\mathfrak{g}) \\
 & = |\mathcal{C}_{\varphi, \alpha}(\spe{h})/\mathfrak{G}| \end{align*}
where the first equality is the first formula, the second equality uses the second formula of Theorem \ref{thm:coloring}, and the third equality comes from Burnside's Lemma and the observation that $\Psi_{\spe{h}, \alpha}(\mathfrak{g}) = |\Fix_{\mathfrak{g}}(\mathcal{C}_{\varphi, \alpha}(\spe{h}))|$.

The first equality for the third identity follows from the definition of principal specialization. The second equality can be proven in a similar manner to our first formula.

\end{proof}

\section{Main Theorem and Examples}
\label{sec:main}
We now give the definition of balanced convex character, and the full version of Theorem \ref{thm:main}.
\begin{definition}
We say that $\varphi$ is a \emph{balanced convex character} if, for every finite set $N$ and ever $\spe{H}$-structure $\spe{h} \in \spe{H}[N]$, the following conditions are satisfied:
\begin{enumerate}
    \item If $|N| = 1$, then $\varphi(\spe{h}) = 1$.
    \item If $|N| > 1$, then there exists non-empty $S \subset N$ such that $\Delta_{S, N \setminus S}(\spe{h}) \neq 0$.
    \item If $\varphi(\spe{h}) = 1$, and there exists $S \subseteq N$ such that $\Delta_{S, N \setminus S}(\spe{h}) \neq 0$, then $\varphi(\spe{h}|_S) = \varphi(\spe{h}/S) = 1$.
\end{enumerate}
\label{def:convex}
\end{definition}

The following we proven in \cite{white-1}.
\begin{example}
Let $\spe{H}$ be a linearized Hopf monoid. Suppose that for all finite sets $|N| > 1$ and all $\spe{H}$-structures $\spe{h} \in \spe{H}[N]$, there exists non-empty $S \subset N$ such that $\Delta_{S, N \setminus S}(\spe{h}) \neq 0$. Then $\zeta$ and $\chi$ are balanced convex characters.

In particular, $\zeta$ and $\chi$ are balanced convex characters for the linearized Hopf monoids of graphs $\spe{G}$, posets $\spe{P}$, and matroids $\spe{M}$.
\label{thm:chiconvex}
\end{example}

Here is the full main result:
\begin{theorem}
Let $\spe{H}$ be a linearized combinatorial Hopf monoid with a balanced convex character $\varphi$. Let $\spe{h}$ be a $\spe{H}$-structure on a finite set $N$, and fix $\mathfrak{G} \subseteq \Aut(\spe{h})$.
\begin{enumerate}
    \item We have $\Psi_{\spe{H}, \varphi}(\spe{h}, \mathfrak{G}, \mathbf{x})$ and $\Psi^O_{\spe{H}, \varphi}(\spe{h}, \mathfrak{G}, \mathbf{x})$ are $M$-increasing.
    \item We have that $\Psi_{\spe{H}, \varphi}(\spe{h}, \mathfrak{G}, x)$ is effectively flawless. Moreover, for all $i$, we have \[(|N|-i)f_i(\Psi_{\spe{H}, \varphi}(\spe{h}, \mathfrak{G}, x)) \leq_{\mathfrak{G}} if_{i+1}(\Psi_{\spe{H}, \varphi}(\spe{h}, \mathfrak{G}, x)).\]
    \item We have that $\Psi^O_{\spe{H}, \varphi}(\spe{h}, \mathfrak{G}, x)$ is strongly flawless. Moreover, for all $i$, we have \[(|N|-i)f_i(\Psi^O_{\spe{H}, \varphi}(\spe{h}, \mathfrak{G}, x)) \leq if_{i+1}(\Psi^O_{\spe{H}, \varphi}(\spe{h}, \mathfrak{G}, x)).\]
\end{enumerate}
\label{thm:bigmain}
\end{theorem}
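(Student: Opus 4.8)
The overall strategy is to isolate one substantive fact—that $\Psi_{\spe{H},\varphi}(\spe{h},\mathfrak{G},\mathbf{x})$ is $M$-increasing—and then harvest all three parts formally from Propositions~\ref{prop:global}--\ref{prop:global3} together with Theorems~\ref{thm:coloring} and~\ref{thm:coloring2}.

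\emph{Step 1 (the substantive input).} Because $\varphi$ is a balanced convex character, the construction of \cite{white-1}, carried out equivariantly in Section~\ref{sec:geometry}, produces a balanced relative simplicial complex $\Sigma_\varphi(\spe{h})$ on which $\Aut(\spe{h})$ acts, together with the identity $\Psi_{\spe{H},\varphi}(\spe{h},\mathfrak{G},\mathbf{x}) = F(\Sigma_\varphi(\spe{h}),\mathfrak{G},\mathbf{x})$. By Theorem~\ref{thm:increasing}, the flag quasisymmetric class function of every balanced relative simplicial complex equipped with a group action is $M$-increasing, so $\Psi_{\spe{H},\varphi}(\spe{h},\mathfrak{G},\mathbf{x})$ is $M$-increasing. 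In the notation of Theorem~\ref{thm:coloring} this says precisely that for $\alpha \leq \beta$ in the refinement order the permutation module $V_{\spe{h},\alpha}$ embeds $\mathfrak{G}$-equivariantly into $V_{\spe{h},\beta}$.

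\emph{Step 2 (the orbital invariant is $M$-increasing).} Taking $\mathfrak{G}$-invariants in the embedding $V_{\spe{h},\alpha}\hookrightarrow V_{\spe{h},\beta}$ gives $\dim V_{\spe{h},\alpha}^{\mathfrak{G}} \leq \dim V_{\spe{h},\beta}^{\mathfrak{G}}$; since each $V_{\spe{h},\bullet}$ is a permutation module, the dimension of its invariants equals the number of $\mathfrak{G}$-orbits, which by Theorem~\ref{thm:coloring2}(2) is $[M_\alpha]\Psi^O_{\spe{H},\varphi}(\spe{h},\mathfrak{G},\mathbf{x})$. Hence the $M$-coefficients of $\Psi^O_{\spe{H},\varphi}(\spe{h},\mathfrak{G},\mathbf{x})$ are weakly increasing along refinement, i.e. it is $M$-increasing (as a plain quasisymmetric function, equivalently as a class function over the trivial group). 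This establishes Part~1. One could equally invoke Theorem~\ref{thm:coloring2}(1) to write $\Psi^O_{\spe{H},\varphi}(\spe{h},\mathfrak{G},\mathbf{x}) = \langle \mathbf{1}, \Psi_{\spe{H},\varphi}(\spe{h},\mathfrak{G},\mathbf{x})\rangle$ and apply Proposition~\ref{prop:global}(2).

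\emph{Step 3 (the polynomial inequalities).} By Theorem~\ref{thm:coloring}(2) each $M$-coefficient $\Psi_{\spe{h},\alpha}$ is a permutation character, so $\Psi_{\spe{H},\varphi}(\spe{h},\mathfrak{G},\mathbf{x})$ is $M$-realizable; being also $M$-increasing (Step~1) and homogeneous of degree $|N|$, Proposition~\ref{prop:global2}(2) shows that its principal specialization—which by Theorem~\ref{thm:coloring}(3) is exactly $\Psi_{\spe{H},\varphi}(\spe{h},\mathfrak{G},x)$—is effectively flawless, and Proposition~\ref{prop:global3} with $d=|N|$ gives $(|N|-i)f_i \leq_{\mathfrak{G}} i f_{i+1}$; this is Part~2. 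For Part~3, $\Psi^O_{\spe{H},\varphi}(\spe{h},\mathfrak{G},\mathbf{x})$ has nonnegative integer $M$-coefficients, hence is $M$-realizable over the trivial group, and is $M$-increasing by Step~2; applying Propositions~\ref{prop:global2}(2) and~\ref{prop:global3} over the trivial group, together with $\Psi^O_{\spe{H},\varphi}(\spe{h},\mathfrak{G},x) = \ps(\Psi^O_{\spe{H},\varphi}(\spe{h},\mathfrak{G},\mathbf{x}))$ from Theorem~\ref{thm:coloring2}(3), yields that $\Psi^O_{\spe{H},\varphi}(\spe{h},\mathfrak{G},x)$ is strongly flawless and satisfies $(|N|-i)f_i \leq i f_{i+1}$. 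Alternatively, apply $\langle \mathbf{1}, -\rangle$ to the equivariant statements of Part~2, noting that principal specialization commutes with $\langle \mathbf{1},-\rangle$ and $f_i(\Psi^O_{\spe{H},\varphi}(\spe{h},\mathfrak{G},x)) = \langle \mathbf{1}, f_i(\Psi_{\spe{H},\varphi}(\spe{h},\mathfrak{G},x))\rangle$.

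The only place where anything genuinely happens is Step~1, specifically the $M$-increasing property of $\Psi_{\spe{H},\varphi}$. The obstacle there—the very reason the coloring complex is introduced—is that there is no canonical $\mathfrak{G}$-equivariant way to refine a $\varphi$-proper set composition, since splitting a block requires an auxiliary order that $\mathfrak{G}$ need not preserve; one circumvents this by realizing $V_{\spe{h},\alpha}$ as chain spaces of the balanced complex $\Sigma_\varphi(\spe{h})$, where the third axiom of a balanced convex character (``$1$-colorable structures decompose only into $1$-colorable structures'') is exactly what makes $\Sigma_\varphi(\spe{h})$ well-defined and closed under the face operations that drive Theorem~\ref{thm:increasing}. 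Everything downstream of Step~1 is the bookkeeping already packaged in Propositions~\ref{prop:global}--\ref{prop:global3}.
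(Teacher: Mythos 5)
Your proposal is correct and follows essentially the same route as the paper: establish the $M$-increasing property via the equivariant coloring complex (Theorem \ref{thm:autgeometry} plus Theorem \ref{thm:increasing}), then deduce everything else formally from Theorems \ref{thm:coloring} and \ref{thm:coloring2} together with Propositions \ref{prop:global}--\ref{prop:global3}. Your handling of the orbital invariants (taking $\mathfrak{G}$-invariants, or equivalently working over the trivial group) is just a reformulation of the paper's step of pairing with the trivial character via $\langle 1,\cdot\rangle$, which you also note as an alternative.
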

We prove Theorem \ref{thm:bigmain} after the proof of Theorem \ref{thm:autgeometry}, which is used to prove the first part. The rest of the proof relies mostly on the material from Section \ref{sec:prelim}, as well as Theorem \ref{thm:coloring} and Theorem \ref{thm:coloring2}.

Since $\chi$ and $\zeta$ are balanced convex characters for the linearized combinatorial Hopf monoids of graphs, posets, and matroids, we see that Theorem \ref{thm:bigmain} applies to all three cases. In particular, the quasisymmetric class functions which enumerate proper colorings of a graph $\spe{g}$, $\spe{p}$-partitions of a poset $\spe{p}$, or $\spe{m}$-generic functions of a matroid $\spe{m}$, each with respect to group actions, are $M$-increasing. The corresponding orbital polynomials are all strongly flawless.

We remark that the result for posets is already a special case for similar results we obtained previously for double posets \cite{white-2}. Similarly, the result for graphs can also be deduced from previous results we have for digraphs \cite{white-2}. However, the new proof is more general, and includes all of the new examples in the next subsections. For the sake of brevity, we refer to papers in the literature that describe the Hopf monoid structure for the remaining examples, and focus instead on the resulting quasisymmetric functions.

\subsection{Acyclic mixed graphs}
 Given a finite set $N$, a \emph{mixed graph} is a triple $(N, U, D)$, where $U$ is a set of undirected edges, and $D$ is a set of directed edges.  A mixed graph is \emph{acyclic} if it does not contain a directed cycle. If we let $\spe{MG}[N]$ denote the set of acyclic mixed graphs on $N$, we obtain a species. The Hopf monoid structure on $\spe{MG}$ was studied in \cite{white-1}.

There are two polynomial invariants associated to acyclic mixed graphs: the weak and strong chromatic polynomial, both introduced in \cite{beck-et-al-2}, motivated by work in \cite{beck-et-al}. Given an acyclic mixed graph $\spe{g},$ the \emph{weak chromatic polynomial} $\chi(\spe{g}, k)$ counts the number of functions $f: N \to [k]$ subject to:
\begin{enumerate}
    \item For every $uv \in U$, we have $f(u) \neq f(v)$.
    \item For every $(u,v) \in D,$ we have $f(u) \leq f(v)$.
\end{enumerate}
The \emph{strong} chromatic polynomial $\bar{\chi}(\spe{g}, k)$ counts similar functions, only with strict inequalities for the second condition instead of the weak inequality. We introduced quasisymmetric function generalizations of both polynomials in \cite{white-1}, by showing how both polynomial invariants come from characters on $\spe{MG}$. 

Given a mixed graph $\spe{g}$, we define 
\[\bar{\chi}(\spe{g}) = 
\begin{cases} 
0 & \text{if $\spe{d}$ has at least one undirected edge} \\ 
1 & \text{otherwise.} 
\end{cases} \]

We proved the following in \cite{white-1}:
\begin{theorem}
The pairs $(\spe{MG}, \bar{\chi})$ and $(\spe{MG}, \chi)$ are linearized combinatorial Hopf monoids. Moreover, both $\bar{\chi}$ and $\chi$ are balanced convex characters.
\end{theorem}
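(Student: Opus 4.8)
The plan is to recall the linearized Hopf monoid $\spe{MG}$ from \cite{white-1} and then reduce both assertions to short combinatorial checks, using Example \ref{thm:chiconvex} to dispose of $\chi$ and handling $\bar{\chi}$ directly against Definition \ref{def:convex}. First I would recall the structure of $\spe{MG}$: the product $\spe{g}\cdot\spe{d}$ takes disjoint unions of the undirected and of the directed edge sets, and for a decomposition $N = S\sqcup T$ one has $\Delta_{S,T}(\spe{g}) = 0$ unless $\spe{g}$ has no directed edge pointing from $T$ into $S$, in which case $\Delta_{S,T}(\spe{g}) = \spe{g}|_S\otimes\spe{g}/S$, with $\spe{g}|_S$ and $\spe{g}/S = \spe{g}|_T$ the induced mixed subgraphs. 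The only features I will use are: (a) $\Delta_{S,T}(\spe{g})\neq 0$ exactly when $S$ is a ``down-set'' for the directed edges, i.e.\ no directed edge enters $S$ from its complement; and (b) $\spe{MG}$ is closed under induced subgraphs, since an induced sub-mixed-graph of an acyclic mixed graph is again acyclic.

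Next I would show that $\bar{\chi}$ and $\chi$ are linearized characters, so that $(\spe{MG},\bar{\chi})$ and $(\spe{MG},\chi)$ are linearized combinatorial Hopf monoids. For $\chi$ there is nothing to do: $\chi$ is the chromatic character, which is a linearized character for every Hopf monoid, and its totally reducible $\spe{MG}$-structures are precisely the edgeless mixed graphs. For $\bar{\chi}$: it is $\{0,1\}$-valued by definition; it is invariant under relabeling because a bijection carries the undirected edge set of $\spe{g}$ bijectively onto that of the relabeled graph, so one is empty iff the other is; and it is multiplicative, since the undirected edges of $\spe{g}\cdot\spe{d}$ form the disjoint union of those of $\spe{g}$ and of $\spe{d}$, whence $\bar{\chi}(\spe{g}\cdot\spe{d}) = 1$ iff $\bar{\chi}(\spe{g}) = \bar{\chi}(\spe{d}) = 1$, that is, $\bar{\chi}(\spe{g}\cdot\spe{d}) = \bar{\chi}(\spe{g})\,\bar{\chi}(\spe{d})$.

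Finally I would verify the three conditions of Definition \ref{def:convex}. Condition (1) is immediate: a one-vertex mixed graph has no edges, so $\bar{\chi}$ and $\chi$ both return $1$. For condition (2), take $\spe{g}\in\spe{MG}[N]$ with $|N|>1$; the directed part of $\spe{g}$ is a directed acyclic graph, hence has a source vertex $v$, and then no directed edge enters $S=\{v\}$ from $N\setminus S$, so $\Delta_{S,N\setminus S}(\spe{g})\neq 0$. For condition (3), suppose $\varphi(\spe{g})=1$ (with $\varphi$ equal to $\bar{\chi}$ or $\chi$) and $\Delta_{S,N\setminus S}(\spe{g})\neq 0$; then $\spe{g}|_S$ and $\spe{g}/S$ are induced mixed subgraphs of $\spe{g}$, and for $\varphi=\bar{\chi}$ the absence of undirected edges passes to induced subgraphs, giving $\bar{\chi}(\spe{g}|_S)=\bar{\chi}(\spe{g}/S)=1$. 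For $\varphi=\chi$, conditions (1) and (2) are exactly the hypotheses of Example \ref{thm:chiconvex}, which then yields directly that $\chi$ (and $\zeta$) are balanced convex for $\spe{MG}$, so no separate argument for (3) is needed there.

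I expect condition (2) to be the only step with genuine content: it is where acyclicity of the directed part is used essentially, via the existence of a source vertex (equivalently, of a nonempty proper down-set) producing a nonzero coproduct. The subtlety to watch is the orientation convention in $\Delta_{S,T}$ --- which of $S$ and $T$ is forbidden to receive incoming directed edges --- since the wrong choice would misidentify the down-set and invalidate the source-vertex argument; fixing this convention explicitly when recalling $\spe{MG}$ from \cite{white-1} removes the ambiguity.
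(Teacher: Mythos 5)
Your proposal is correct and follows the intended argument: the paper gives no proof of this statement, citing \cite{white-1} both for the Hopf monoid structure on $\spe{MG}$ and for the theorem itself, and the coproduct convention you fix ($\Delta_{S,T}(\spe{g})=0$ exactly when some directed edge points from $T$ into $S$) is the one used there, so your source-vertex argument for condition (2) of Definition \ref{def:convex}, the induced-subgraph argument for condition (3) with $\bar{\chi}$, and the reduction of the $\chi$ case to Example \ref{thm:chiconvex} all go through. The only ingredient you, like the paper, import from \cite{white-1} without verification is that $\spe{MG}$ with these operations actually satisfies the Hopf monoid axioms (coassociativity and compatibility), which is consistent with how the paper treats this result.
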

We also showed that a function $f$ was $\chi$-proper for $\spe{g}$ if and only if $f$ is a strong coloring, and $f$ is $\bar{\chi}$-proper if and only if $f$ is a weak coloring.

We are able to apply Theorem \ref{thm:bigmain} to $\spe{MG}$ with respect to either character. We focus on describing the resulting quasisymmetric class functions. Let $\spe{g}$ be a mixed graph on a vertex set $N$. Then $\mathfrak{g} \in \mathfrak{S}_N$ is an \emph{automorphism} of $\spe{g}$ if and only if the following two conditions are satisfied:
\begin{enumerate}
    \item For each undirected edge $uv \in \spe{g}$, we have $\mathfrak{g}u\mathfrak{g}v \in \spe{g}$.
    \item For each directed edge $(u,v) \in \spe{g}$, we have $(\mathfrak{g}u, \mathfrak{g}v) \in \spe{g}$.
\end{enumerate}
Let $\Aut(\spe{g})$ be the automorphism group, and let $\mathfrak{g} \in \Aut(\spe{g})$.
Given a (weak) coloring $f: N \to \mathbb{N}$, we let $\mathfrak{g}f = f \circ \mathfrak{g}^{-1}$. This defines an action of $\Aut(\spe{g})$ on the set of (weak) colorings.

For $\mathfrak{g} \in \mathfrak{G}$, we see that \[\Psi_{\spe{MG}, \chi}(\spe{g}, \mathfrak{G}, \mathbf{x}; \mathfrak{g}) = \sum_{f: \mathfrak{g}f = f} \mathbf{x}^f \] where the sum is over colorings that are fixed by $\mathfrak{g}$.

Similarly, we see that \[\Psi_{\spe{MG}, \bar{\chi}}(\spe{g}, \mathfrak{G}, \mathbf{x}; \mathfrak{g}) = \sum_{f: \mathfrak{g}f = f} \mathbf{x}^f \] where the sum is over weak colorings that are fixed by $\mathfrak{g}$.
By Theorem \ref{thm:bigmain}, both of these invariants are $M$-increasing.

As an example, if we let $\spe{g}$ be the graph in Figure \ref{fig:mixedgraph}. Let $\mathbb{Z}/2\mathbb{Z}$ act by swapping $a$ with $c$ and $b$ with $d$. 
We have that \[\Psi_{\spe{MG}, \chi}(\spe{g}, \mathbb{Z}/2\mathbb{Z}, \mathbf{x}) = M_{2,2} + \rho(M_{1,1,2} + M_{2,1,1} + 3M_{1,1,1,1}) \]
and 
\begin{align*} \Psi_{\spe{MG}, \bar{\chi}}(\spe{g}, \mathbb{Z}/2\mathbb{Z}, \mathbf{x}) & = \rho M_{2,2} + \rho(M_{1,1,2}+M_{2,1,1}) +\Psi_{\spe{MG}, \chi}(\spe{g}, \mathbb{Z}/2\mathbb{Z}, \mathbf{x}). \end{align*}
\begin{figure}
\begin{center}
\begin{tikzpicture}
  \node[circle, draw=black, fill=white] (b) at (0,2) {$b$};
  \node[circle, draw=black, fill=white] (a) at (0,0) {$a$};
  \node[circle, draw=black, fill=white] (c) at (1.5,0) {$c$};
  \node[circle, draw=black, fill=white] (d) at (1.5,2) {$d$};
 \draw[-Latex] (b) -- (a);
  \draw[-Latex] (d) edge (c);
  \draw[-] (b) -- (c);
    \draw[-] (d) -- (a);

\end{tikzpicture}

\end{center}
\label{fig:mixedgraph}
\caption{an acyclic mixed graph.}
\end{figure}
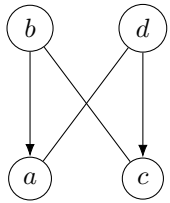

\subsection{Double Posets}
Now we will discuss double posets. Given a finite set $N$, a double poset is a triple $(N, \leq_1, \leq_2)$ where $\leq_1$ and $\leq_2$ are both partial orders on $N$. If we let $\spe{DP}[N]$ denote the set of double posets on $N$, then $\spe{DP}$ is a species. The Hopf algebra of double posets was introduced by Malvenuto and Reutenauer \cite{malvenuto}. We showed in \cite{white-1} that double posets also form a linearized combinatorial Hopf monoid $\spe{DP}$. The associated quasisymmetric function is a generalization of Gessel's $P$-partition enumerator, along with a generalization for labeled posets $(P, \omega)$. This quasisymmetric function is studied extensively by Grinberg \cite{grinberg}, who shows $F$-positivity results when the double poset is tertispecial. We introduced the corresponding quasisymmetric class function in \cite{white-2}. Now we show how the same function comes from linearized combinatorial Hopf monoids.

Given a double poset $\spe{d}$, a pair $(m,m') \in M$ is an \emph{inversion} if $m <_1 m'$ and $m' <_2 m$.
Given a double poset $\spe{d}$, we define 
\[\varphi(\spe{d}) = 
\begin{cases} 
0 & \text{if $\spe{d}$ has an inversion} \\ 
1 & \text{otherwise.} 
\end{cases} \]
We proved the following in \cite{white-1}.
\begin{theorem}
The pair $(\spe{DP}, \varphi)$ is a linearized combinatorial Hopf monoid. Moreover, $\varphi$ is a balanced convex character.
\end{theorem}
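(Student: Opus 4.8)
The plan is to verify the linearized bimonoid structure on $\spe{DP}$ by direct inspection, then invoke the Aguiar--Ardila theorem \cite{aguiar-ardila} that a connected bimonoid in species is automatically a Hopf monoid, observe that $\varphi$ is $\{0,1\}$-valued and multiplicative so that $(\spe{DP},\varphi)$ is a linearized combinatorial Hopf monoid, and finally check the three conditions of Definition~\ref{def:convex} one at a time. I expect no deep difficulty: the only place where care is needed is the bookkeeping in the coassociativity and compatibility axioms.

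\textbf{Hopf monoid structure.} The product $\spe{d}\cdot\spe{e}$ on $M\sqcup N$ is the disjoint union of the first orders together with the ordinal sum (with $M$ below $N$) of the second; this is a single double poset, so $\mu$ is linearized and naturality is immediate, and associativity follows since disjoint union and iterated ordinal sum are associative. The coproduct $\Delta_{M,N}(\spe{d})$ equals $\spe{d}|_M\otimes\spe{d}|_N$ when $M$ is a lower set of the first order and $0$ otherwise, so $\Delta$ is linearized and natural; connectedness $\dim\mathbb{K}\spe{DP}[\emptyset]=1$ is clear, and the empty double poset is unit and counit. For coassociativity I would use the standard chain-of-ideals fact: for $\spe{d}$ on $L\sqcup M\sqcup N$, ``$L$ is a first-order lower set of $\spe{d}$ and $M$ is a first-order lower set of $\spe{d}|_{M\sqcup N}$'' is equivalent to ``$L\sqcup M$ is a first-order lower set of $\spe{d}$ and $L$ is a first-order lower set of $\spe{d}|_{L\sqcup M}$'', and in that case both iterated coproducts yield $\spe{d}|_L\otimes\spe{d}|_M\otimes\spe{d}|_N$. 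For the bimonoid compatibility axiom the crucial observation is that the first order of a product is a disjoint union, so a subset meeting both factors is a first-order lower set of $\spe{d}\cdot\spe{e}$ exactly when each of its two pieces is a lower set of the corresponding factor — which is exactly when the other side of the axiom is nonzero — and when nonzero one checks that restriction of a product equals the product of the restrictions, using that the ``$M$-below-$N$'' block of the second order restricts correctly.

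\textbf{The character $\varphi$.} Since $\varphi(\spe{d})\in\{0,1\}$, it is a linearized natural transformation $\mathbb{K}\spe{DP}\to\mathbb{K}\spe{E}$. To see it is a character, note an inversion $(x,y)$ of $\spe{d}\cdot\spe{e}$ has $x<_1 y$; as $<_1$ is a disjoint union, $x$ and $y$ lie in the same factor and form an inversion there, and conversely an inversion of a factor is an inversion of the product. Hence $\spe{d}\cdot\spe{e}$ is inversion-free if and only if both $\spe{d}$ and $\spe{e}$ are, which is the multiplicativity of $\varphi$.

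\textbf{Balanced convexity.} Condition (1) is immediate: if $|N|=1$ there are no pairs, so $\spe{d}$ has no inversion. For condition (2), when $|N|>1$ I would pick an element $m$ minimal for the first order; then $\{m\}$ is a nonempty proper first-order lower set, so $\Delta_{\{m\},N\setminus\{m\}}(\spe{d})\neq 0$. For condition (3), if $\Delta_{S,N\setminus S}(\spe{d})\neq 0$ then $\spe{d}|_S$ and $\spe{d}/S=\spe{d}|_{N\setminus S}$ are induced sub-double-posets whose two orders are restrictions of $\leq_1,\leq_2$, so every inversion of $\spe{d}|_S$ or of $\spe{d}/S$ is already an inversion of $\spe{d}$; therefore $\varphi(\spe{d})=1$ forces $\varphi(\spe{d}|_S)=\varphi(\spe{d}/S)=1$. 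The conceptual point, and the only thing worth isolating as a main step, is that ``no inversion'' is a hereditary property under induced substructures: this is simultaneously why $\varphi$ is convex and — combined with the disjoint-union shape of the first order — why $\varphi$ is a character, so once it is in hand the remaining verifications, including the fiddly bimonoid axioms, are purely routine.
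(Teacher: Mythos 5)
Your argument is correct, and it is worth noting that the paper itself gives no proof of this statement at all: it simply cites the earlier paper [white-1], where the Hopf monoid structure on $\spe{DP}$ and the properties of $\varphi$ were established. So your proposal is not a variant of an argument in this paper but a self-contained substitute for the outsourced one, and as such it checks out. The Hopf-monoid bookkeeping is handled correctly: since the first order of a product is a disjoint union and the second order an ordinal sum, your observations that (i) a subset of $M\sqcup N$ is a $\leq_1$-lower set of $\spe{d}\cdot\spe{e}$ iff its intersections with $M$ and $N$ are lower sets of the factors, and (ii) restriction of a product is the product of restrictions (including the ``$M$-below-$N$'' block of the ordinal sum), are exactly what make the compatibility axiom and coassociativity go through, and the appeal to Aguiar--Ardila for the antipode of a connected bimonoid is standard and consistent with how this paper treats Hopf monoids. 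Your isolation of the hereditary nature of ``inversion-free'' under induced sub-double-posets is also the right conceptual pivot: combined with the disjoint-union shape of $\leq_1$ it gives multiplicativity of $\varphi$ (an inversion $(m,m')$ has $m<_1 m'$, forcing $m,m'$ into a single factor, where both relations restrict), and on its own it gives condition (3) of Definition~\ref{def:convex}; conditions (1) and (2) are handled correctly by the absence of strict $<_1$-pairs on a singleton and by taking a $\leq_1$-minimal element to produce a nonempty proper order ideal. In short: a complete direct verification where the paper offers only a citation, and one consistent with the paper's (more permissive) notion of linearized Hopf monoid in which $\Delta_{M,N}$ may vanish.
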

We also showed that the corresponding $\varphi$-proper colorings were double poset partitions.
A double poset partition is a function $\sigma: N \to \mathbb{N}$ subject to:
\begin{enumerate}
    \item for $x, y \in N$, if $x \leq_1 y$, the $\sigma(x) \leq \sigma(y)$.
    \item for $x, y \in N$, if $x \leq_1 y$ and $y <_2 x$, then $\sigma(x) < \sigma(y)$.
\end{enumerate}

Given a double poset $\spe{d}$ on $N$, a permutation $\mathfrak{g} \in \mathfrak{S}_N$ is an autmorphism if and only if it is an automorphism of both $\leq_1$ and $\leq_2$. Let $\Aut(\spe{d})$ be the automorphism group of $\spe{d}$. Then $\Aut(\spe{d})$ acts on the set of double poset partitions via $\mathfrak{g}\sigma = \sigma \circ \mathfrak{g}^{-1}$, where $\sigma$ is a double poset partition, and $\mathfrak{g} \in \Aut(\spe{d}).$

Let $\spe{d}$ be a double poset on $N$, and $\mathfrak{G} \subseteq \Aut(\spe{d})$. For $\mathfrak{g} \in \mathfrak{G}$, we have \[\Psi_{\spe{DP}, \varphi}(\spe{d}, \mathfrak{G}, \mathbf{x}; \mathfrak{g}) = \sum_{\sigma: \mathfrak{g}\sigma = \sigma} \mathbf{x}^{\sigma} \]
where the sum is over double poset partitions $\sigma$ that are fixed by $\mathfrak{g}$.

As an example, if we let $\spe{d}$ be the double poset in Figure \ref{fig:doubleposet}, where the Hasse diagram on the left is for $\leq_1$ and the Hasse diagram on the right is for $\leq_2$. We let $\mathbb{Z}/2\mathbb{Z}$ act by swapping $a$ with $c$ and $b$ with $d$. Then 
\begin{align*} \Psi_{\spe{DP}, \varphi}(\spe{d}, \mathbb{Z}/2\mathbb{Z}, \mathbf{x}) & = M_{2,2} + \rho(M_{1,1,2} + M_{1,2,1}+M_{2,1,1} + 2M_{1,1,1,1}). \end{align*} 
Our Theorem \ref{thm:bigmain} applies. In this case, several of the results were previously obtained in \cite{white-2}. 

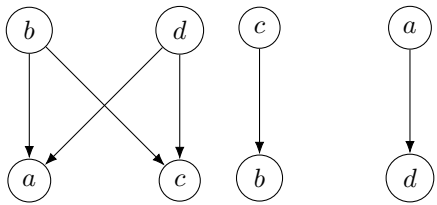
\begin{figure}
\begin{center}
\begin{tabular}{cc}
\begin{tikzpicture}
  \node[circle, draw=black, fill=white] (b) at (0,2) {$b$};
  \node[circle, draw=black, fill=white] (a) at (0,0) {$a$};
  \node[circle, draw=black, fill=white] (c) at (2,0) {$c$};
  \node[circle, draw=black, fill=white] (d) at (2,2) {$d$};
 \draw[-Latex] (b) -- (a);
  \draw[-Latex] (d) edge (c);
  \draw[-Latex] (b) -- (c);
    \draw[-Latex] (d) -- (a);

\end{tikzpicture}

& 

\begin{tikzpicture}
  \node[circle, draw=black, fill=white] (b) at (0,2) {$c$};
  \node[circle, draw=black, fill=white] (a) at (0,0) {$b$};
  \node[circle, draw=black, fill=white] (c) at (2,0) {$d$};
  \node[circle, draw=black, fill=white] (d) at (2,2) {$a$};
 \draw[-Latex] (b) -- (a);
  \draw[-Latex] (d) edge (c);

\end{tikzpicture}

\end{tabular}
\end{center}
\label{fig:doubleposet}
\caption{A double poset.}
\end{figure}

\subsection{Hopf monoid of generalized permutohedra}
 A \emph{generalized permutohedron} is a polytope whose normal fan is a coarsening of the braid arrangement. If we let $\spe{GP}[N]$ denote the set of all generalized permutohedra in $\mathbb{R}^N$, then we obtain the species of generalized permutohedra. Aguiar and Ardila \cite{aguiar-ardila} studied the linearized Hopf monoid $\spe{GP}$ of generalized permutohedra. This example contains many other Hopf monoids, such as graphs, posets, and matroids, as Hopf submonoids.

 The multiplication operation for $\spe{GP}$ consists of cartesian products of polytopes. As such, a generalized permutohedron is totally decomposable if and only if it is a cartesian product of zero-dimensional generalized permutohedra. In particular, it consists of only one vertex. Thus $\chi(\spe{p}) = 1$ if and only if $\spe{p}$ consists of only one vertex. This is the  basic character introduced by Aguiar and Ardila \cite{aguiar-ardila}.
They showed that, given a generalized permutohedron $\spe{p}$, a function $f: N \to \mathbb{N}$ is $\chi$-proper if it is maximized by a unique vertex of $\spe{p}$. We call these $\spe{p}$-generic functions.

Given $\mathfrak{g} \in \mathfrak{S}_N$, we can extend $\mathfrak{g}$ by linearity to get a linear isomorphism $\mathfrak{g}: \mathbb{R}^N \to \mathbb{R}^N$. We say that $\mathfrak{g}$ is an automorphism of $\spe{p}$ if $\mathfrak{g}(\spe{p}) = \spe{p}$.
Let $\Aut(\spe{p})$ be the automorphism group of $\spe{p}$. Then $\Aut(\spe{p})$ acts on the set of $\spe{p}$-generic functions. Given $\mathfrak{G} \subseteq \Aut(\spe{p})$ and $\mathfrak{g} \in \mathfrak{G}$, we see that the $\chi$-chromatic quasisymmetric class function of $\spe{p}$ is given by:
\[\Psi_{\spe{GP}, \chi}(\spe{p}, \mathfrak{G}, \mathbf{x}; \mathfrak{g}) = \sum_{f: \mathfrak{g}f = f} \mathbf{x}^f\] where the sum is over $\spe{p}$-generic functions that are fixed by $\mathfrak{g}$.

As an example, for $1 \leq i \leq j \leq n$, let $\Sigma_{i,j}$ be the convex hull of the standard basis vectors $e_i, e_{i+1}, \ldots, e_j$. Then Postnikov's description of the Loday realization of the associahedron is the Minkowski sum $A_n = \sum_{i \leq j} \Sigma_{i,j}$. Consider the permutation $\omega: [n] \to [n]$ given by $\omega(i) = n-i+1$. We see that, after we extend $\omega$ by linearity, $\omega(\Sigma_{i,j}) = \Sigma_{n-j+1, n-i+1}$. Hence $\omega(A_n) = A_n$, and we have an action of $\mathbb{Z}/2\mathbb{Z}$ on $A_n$.
 Then \[\Psi_{\spe{G}, \chi}(\spe{a}_3, \mathbb{Z}/2\mathbb{Z}, \mathbf{x}) = M_{2,1} + 3\rho M_{1,1,1}.\]
Since $\chi$ is a balanced convex character, Theorem \ref{thm:bigmain} applies to $\Psi_{\spe{GP}, \chi}(\spe{p}, \mathfrak{G}, \mathbf{x})$.


\subsection{Hypergraphs}
As another example, Aguiar and Ardila \cite{aguiar-ardila} also consider various Hopf monoid structures related to hypergraphs $\spe{HG}$. The corresponding polynomial invariants $\Psi_{\spe{HG}, \chi}(\spe{h}, \{e\}, x)$ were studied in greater detail in \cite{qsym-hypergraph}. A \emph{hypergraph} consists of a pair $(V, E)$, where $E$ is a multiset of nonempty subsets of $V$. Given a hypergraph $\spe{h}$, and edge $e$, and a function $f:N \to \mathbb{N}$, we say $v \in e$ is a \emph{local maximum} if $f(v) \geq f(u)$ for all $u \in e$.
A $\chi$-proper coloring is a function $f: V \to \mathbb{N}$ such that every edge $e$ has a unique local maximum.

Given a hypergraph $\spe{h}$, we see that $\Aut(\spe{h})$ consists of those $\mathfrak{g} \in \mathfrak{S}_N$ such that $\mathfrak{g}e \in E$ for all $e \in E$ (preserving multiplicity as well). Given $\mathfrak{G} \subseteq \Aut(\spe{h})$, we consider the action of $\mathfrak{G}$ on the set of $\chi$-proper colorings. 

For $\mathfrak{g} \in \mathfrak{G}$, we have
\[\Psi_{\spe{HG}, \chi}(\spe{h}, \mathfrak{G}, \mathbf{x}; \mathfrak{g}) = \sum_{f: \mathfrak{g}f = f} \mathbf{x}^f\] where the sum is over $\chi$-proper colorings of $\spe{h}$ that are fixed by $\mathfrak{g}$.

For example, let $\spe{h}$ be the hypergraph on $\{a,b,c,d\}$ whose edges consist of every subset of $\{a,b,c,d\}$ of size $3$, all with multiplicity one. Then 
\[\Psi_{\spe{HG}, \chi}(\spe{h}, \mathbb{Z}/4\mathbb{Z}, \mathbf{x}) = 3\rho M_{2,1,1}+6\rho M_{1,1,1,1}. \]
Since $\chi$ is a balanced convex character, Theorem \ref{thm:bigmain} applies to $\Psi_{\spe{HG}, \chi}(\spe{h}, \mathfrak{G}, \mathbf{x})$.

\subsection{Simplicial complexes}
Finally, another Hopf submonoid $\spe{SC}$ is related to simplicial complexes. Previously, Benedetti, Hallam, and Machacek \cite{benedetti-et-al} studied a Hopf algebra of simplicial complexes $SC$.  They also study a family of characters $\varphi_s$, indexed by positive integers.
The character $\varphi_s(\Sigma)$ is $1$ if $\dim(\Sigma) \leq s$ and $0$ otherwise. The Hopf monoid of simplicial complexes was studied by Aguiar and Ardila \cite{aguiar-ardila}.

It is not hard to show that $\varphi_s$ is a balanced convex character. The first two conditions in the definition of balanced convex character are easy to verify.
For a simplicial complex $\Sigma$ on $N$, and $S \subseteq N$, we have $\Sigma|_S$ is the induced subcomplex on $S$, while $\Sigma/S = \Sigma|_{N \setminus S}$. If $\varphi_s(\Sigma) = 1$, then $\Sigma$ contains no face on $s+1$ vertices. Clearly no induced subcomplex of $\Sigma$ does either. Thus, $\varphi_s$ satisfies the third condition to be a balanced convex character.

Let $\Sigma$ be a simplicial complex on $N$.
Then a $\varphi_s$-proper coloring is a function $f:N \to \mathbb{N}$ such that there does not exist $\sigma \in \Sigma$ nor $i \in \mathbb{N}$ such that $|\sigma| \geq s$ and $\sigma \subseteq f^{-1}(i)$. Let $F_s(\Sigma)$ be the collection of $\varphi_s$-proper colorings. 

Let $\Sigma$ be a simplicial complex on $N$. An automorphism of $\Sigma$ is a permutation $\mathfrak{g}$ with the property that $\mathfrak{g}\sigma \in \Sigma$ for all $\sigma \in \Sigma$. Let $\Aut(\Sigma)$ denote the automorphism group of $\Sigma$, and let $\mathfrak{G} \subseteq \Aut(\Sigma)$. For $\mathfrak{g} \in \mathfrak{G}$, the $\varphi_s$-chromatic quasisymmetric class function is given by
\[\Psi_{\spe{SC}, \varphi_s}(\Sigma, \mathfrak{G}, \mathbf{x}) = \sum_{f \in \Fix_{\mathfrak{g}}(F_s(\Sigma))} \mathbf{x}^f. \]

For example, let $\Sigma$ be the full simplex on $N = \{0,1, 2, 3\}$. Let $\mathbb{Z}/4\mathbb{Z}$ act on $N$ by the left regular action. Then $\mathbb{Z}/4\mathbb{Z}$ acts on $\Sigma$. Thus, \[\Psi_{\spe{SC}, \psi_2}(\Sigma, \mathfrak{G}, \mathbf{x}) = (\rho+1+\sgn)M_{2,2}+3\rho(M_{2,1,1}+M_{1,2,1}+M_{1,1,2}+2M_{1,1,1,1}).\]

\section{Balanced Relative Simplicial Complexes}
\label{sec:geometry}
Now we discuss balanced relative simplicial complexes, and their flag quasisymmetric class functions. This section is likely of independent interest.

\begin{figure}
\begin{center}
\begin{tikzpicture}
\draw[color=white, fill=gray!20] (30:2cm) -- (330:2cm) -- (210:2cm) -- (150:2cm) -- cycle;
  \node[circle, draw=red, fill=white, dashed, thick] (b) at (150:2cm) {$abc$};
  \node[circle, draw=red, fill=white, dashed, thick] (a) at (210:2cm) {$c$};
  \node[circle, draw=red, fill=white, dashed, thick] (e) at (330:2cm) {$acd$};
  \node[circle, draw=red, fill=white,dashed, thick] (d) at (30:2cm) {$a$};
  \node[circle, draw=black, fill=white,thick] (or) at (0:0cm) {$ac$};
  \draw[dashed, red, thick] (a) -- (b);
  \draw[dashed, red, thick] (d) -- (e);

  \draw[dashed, red, thick] (b) -- (d);
  \draw[dashed, red, thick] (a) -- (e);
  \draw[thick] (b) -- (or) -- (d);
  \draw[thick] (a) -- (or) -- (e);

\end{tikzpicture}
\end{center}
    \caption{A coloring complex $\Sigma$. Dashed lines correspond to faces that are not in $\Sigma$.}
    \label{fig:coloringcomplex}
\end{figure}
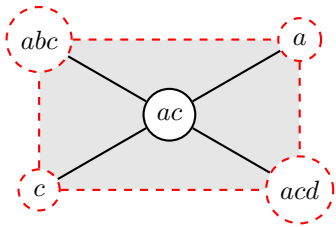

\begin{definition}
A \emph{balanced relative simplicial complex} of dimension $d$ on a vertex set $V$ is a non-empty collection $\Phi$ of subsets of $V$, along with a function $\kappa: V \to [d+1]$ with the following properties:
\begin{enumerate}
    \item For every $\rho \subseteq \sigma \subseteq \tau$, if $\rho, \tau \in \Phi$, then $\sigma \in \Phi$.
    \item For every $\rho \in \Phi$, there exists $\sigma \in \Phi$ such that $\rho \subseteq \sigma$ and $|\sigma| = d+1$,
    \item For every $\rho \in \Phi$, we have $\kappa(\rho) := \{\kappa(v): v \in \rho \}$ has size $|\rho|$.
\end{enumerate}
\end{definition}
The name comes from the fact that there exists simplicial complexes
$(\Gamma, \Sigma)$ with $\Gamma \subseteq \Sigma$, and $\Phi = \Sigma \setminus \Gamma$. Given $\sigma \in \Phi$, we let $\kappa(\sigma) = \{\kappa(v): v \in \sigma \}$.

 Given $S \subseteq [d+1]$, we let $f_S(\Phi)$ denote the number of faces $\sigma$ such that $\kappa(\sigma) = S$. This is the flag $f$-vector of $\Phi$. We encode the flag $f$-vector with a quasisymmetric function 
\[ \Hilb(\Phi, \mathbf{x})= \sum_{S \subseteq [d+1]} f_S(\Phi) M_{\alpha(S)}\]
where if $S = \{s_1, \ldots, s_k \}$, then $\alpha(S) = (s_1, s_2-s_1, s_3-s_2, \ldots, s_{k}-s_{k-1}, d+2-s_k)$. This is the \emph{flag quasisymmetric function} associated to $\Phi$. It has degree $d+2$.

Let $V(\Phi)$ be the vertex set of $\Phi$. A bijection $\mathfrak{g}: V \to V$ is an \emph{automorphism} of $\Phi$ if it satisfies the following two properties:
\begin{enumerate}
    \item For every $v \in V$, we have $\kappa(\mathfrak{g}v) = \kappa(v)$.
    \item For every $\{v_1, \ldots, v_k \} \in \Phi$, we have $\{\mathfrak{g}(v_1), \ldots, \mathfrak{g}(v_k) \} \in \Phi$.
\end{enumerate}

Let $\Aut(\Phi)$ be the group of automorphisms of $\Phi$, and fix a subgroup $\mathfrak{G} \subseteq \Aut(\Phi)$. For $\alpha \models |N|$, let $F_{\alpha}(\Phi) = \{\sigma \in \Phi: \alpha(\kappa(\sigma)) = \alpha \}$. Then $\mathfrak{G}$ acts on $F_{\alpha}(\Phi)$. We define the \emph{flag quasisymmetric class function} of $(\Phi, \mathfrak{G})$ to be 
\[ \Hilb(\Phi, \mathfrak{G}, \mathbf{x}) = \sum_{\sigma \in \Phi: \mathfrak{g}\sigma = \sigma}  M_{\alpha(\kappa(\sigma))}. \]
As an example, consider the balanced relative simplicial complex $\Phi$ with vertex set $2^{\{a,b,c,d\} } \setminus \{\emptyset, \{a,b,c,d\}\}$ appearing in Figure \ref{fig:coloringcomplex}. We denote the vertex $\{a,b,c\}$ as $abc$ for simplicity. We also let $\kappa(S) = |S|$. Then $\Phi$ is a balanced relative simplicial complex. We see that $\Aut(\Phi)$ is isomorphic to $\mathbb{Z}/2\mathbb{Z}$. Then \[F(\Phi, \mathbb{Z}/2\mathbb{Z}, \mathbf{x}) = M_{2,2}+\rho(M_{1,1,2}+M_{2,1,1}+2M_{1,1,1,1}).\]
There are natural orbital and polynomial specializations of $\Hilb(\Phi, \mathfrak{G}, \mathbf{x})$. The discussion is similar to what we already have for $\Psi_{\spe{H}, \varphi}(\spe{h}, \mathfrak{G}, \mathbf{x})$.

\begin{theorem}
Let $\Phi$ be a balanced relative simplicial complex of dimension $d$, and let $\mathfrak{G} \subseteq \Aut(\Phi)$. Then $\Hilb(\Phi, \mathfrak{G}, \mathbf{x})$ is a quasisymmetric class function, and is $M$-increasing.
\label{thm:increasing}
\end{theorem}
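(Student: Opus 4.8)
The plan is to prove the two assertions separately: first that $\Hilb(\Phi,\mathfrak{G},\mathbf{x})$ is a genuine quasisymmetric class function, and then that it is $M$-increasing. For the first part, one unwinds the definition: for each $\mathfrak{g}\in\mathfrak{G}$ we have
\[ \Hilb(\Phi,\mathfrak{G},\mathbf{x};\mathfrak{g}) = \sum_{\alpha\models d+2} |\Fix_{\mathfrak{g}}(F_{\alpha}(\Phi))|\, M_{\alpha}, \]
since $\mathfrak{g}$ preserves $\kappa$ and hence permutes each $F_{\alpha}(\Phi)$. Each coefficient $\mathfrak{g}\mapsto |\Fix_{\mathfrak{g}}(F_{\alpha}(\Phi))|$ is the character of the permutation representation $\mathbb{C}^{F_{\alpha}(\Phi)}$, so it is a class function on $\mathfrak{G}$; assembling the $M_{\alpha}$ with these coefficients gives an element of $C(\mathfrak{G},\qsym)$ by the general principle recorded in Subsection~\ref{subsec:groupaction}. (It also shows $\Hilb(\Phi,\mathfrak{G},\mathbf{x})$ is $M$-realizable, which we need for later applications.) The quasisymmetry itself is really a fact about the flag $f$-vector encoding: the coefficient of $\prod x_{i_j}^{\alpha_j}$ depends only on $\alpha$, not on the chosen indices $i_1<\cdots<i_k$, because $M_{\alpha(S)}$ already has that property and we are taking a nonnegative combination of such.

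For the $M$-increasing part, fix $\alpha\leq\beta$ in the refinement order on compositions of $d+2$; equivalently, writing $S=I(\alpha)$, $T=I(\beta)$ as subsets of $[d+1]$, we have $S\subseteq T$. I must exhibit a $\mathfrak{G}$-equivariant injection $F_{\alpha}(\Phi)\hookrightarrow F_{\beta}(\Phi)$ — that is, a $\mathfrak{G}$-equivariant embedding of $\mathbb{C}^{F_{\alpha}(\Phi)}$ as a subrepresentation of $\mathbb{C}^{F_{\beta}(\Phi)}$, which is exactly what $\chi_{\alpha}\leq_{\mathfrak{G}}\chi_{\beta}$ means. The natural candidate comes from the defining property of a relative complex: given a face $\sigma$ with $\kappa(\sigma)=S$, property~(2) lets us extend $\sigma$ to a full-dimensional face $\tau\supseteq\sigma$ with $|\tau|=d+1$, and then property~(1) (the "sandwich" axiom) guarantees every $\sigma\subseteq\sigma'\subseteq\tau$ lies in $\Phi$; since $\kappa$ restricted to $\tau$ is a bijection onto $[d+1]$, there is a unique such $\sigma'$ with $\kappa(\sigma')=T\supseteq S$. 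The problem is that $\tau$ is not canonical, so this map is not yet well-defined, let alone $\mathfrak{G}$-equivariant.

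The main obstacle, then, is to make this extension canonical and equivariant. The cleanest route is to linearize: rather than choosing one extension, define a map on vector spaces $\mathbb{C}^{F_{\alpha}(\Phi)}\to\mathbb{C}^{F_{\beta}(\Phi)}$ by sending a face $\sigma$ to the \emph{sum} (or a suitable averaged combination) over all faces $\sigma'$ with $\kappa(\sigma')=T$ and $\sigma\subseteq\sigma'$; this is manifestly $\mathfrak{G}$-equivariant because $\mathfrak{G}$ preserves $\kappa$ and inclusion. One must then check this linear map is injective, which follows from a triangularity argument: order faces compatibly with a linear extension of inclusion, and observe that $\sigma$ itself appears with nonzero coefficient in the image of $\sigma$ and that the "leading" face determines $\sigma$ (here the relative-complex axioms ensure the fibers behave well — every $\sigma\in F_{\alpha}$ does extend, by property~(2), so the image is nonzero and the incidence matrix between $F_{\alpha}$ and $F_{\beta}$ has a transversal). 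Alternatively, and perhaps more in the spirit of the earlier sections, one can pass through the coloring-complex/Hopf-monoid picture, but the direct combinatorial argument with the incidence map between $F_{\alpha}(\Phi)$ and $F_{\beta}(\Phi)$ is self-contained and is what I would write up. Once the equivariant injection $\chi_{\alpha}\leq_{\mathfrak{G}}\chi_{\beta}$ is established for every covering pair $\alpha\lessdot\beta$, the general case $\alpha\leq\beta$ follows by transitivity of $\leq_{\mathfrak{G}}$, completing the proof.
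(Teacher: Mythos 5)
Your overall route is the same as the paper's: the first half (coefficients of $M_{\alpha}$ are the permutation characters of $\mathfrak{G}$ acting on $F_{\alpha}(\Phi)$) matches the paper exactly, and your linearized incidence map sending $\sigma$ to $\sum_{\tau\in F_{\beta}(\Phi),\,\sigma\subseteq\tau}\tau$ is precisely the map $\theta_{\alpha,\beta}$ of Equation \eqref{eq:theta} and Proposition \ref{prop:thetaprops}, whose equivariance is indeed immediate. The gap is in your justification of injectivity. As written, the "triangularity" argument does not parse: the image of $\sigma$ is a sum of faces of type $\beta$, so "$\sigma$ itself appears with nonzero coefficient in the image of $\sigma$" is never the case, and there is no natural triangular structure on a rectangular incidence matrix between $F_{\alpha}(\Phi)$ and $F_{\beta}(\Phi)$. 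Your fallback claim, that the incidence matrix "has a transversal," is genuinely insufficient: a $0$--$1$ matrix can admit a system of distinct representatives and still have linearly dependent columns (the all-ones $2\times 2$ matrix already does), and likewise the fact that each basis vector has nonzero image does not give injectivity of the linear map.

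What actually makes the argument work, and what the paper uses, is the balancedness of $\Phi$ applied at the level of the \emph{witness} face rather than the facet: for $\sigma\in F_{\alpha}(\Phi)$ with $\kappa(\sigma)=S$ and $S\subseteq T$ with $\alpha(T)=\beta$, extend $\sigma$ to a facet $\gamma$ (axiom (2)), and let $\tau=\{v\in\gamma:\kappa(v)\in T\}$, which lies in $\Phi$ by the sandwich axiom (1) since $\sigma\subseteq\tau\subseteq\gamma$. Because $\kappa$ is injective on any face, \emph{every} $\pi\in F_{\alpha}(\Phi)$ with $\pi\subseteq\tau$ must equal $\tau|_S=\sigma$; in other words the row of the incidence matrix indexed by $\tau$ has a single nonzero entry, in column $\sigma$. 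Hence for any $\vec{v}\in V_{\alpha}(\Phi)$ with $\theta_{\alpha,\beta}(\vec{v})=0$ one gets $0=[\tau]\theta_{\alpha,\beta}(\vec{v})=[\sigma]\vec{v}$ for every $\sigma$, so $\vec{v}=0$. You used the uniqueness of the type-$T$ subface of a facet when trying to build a (non-canonical) set-level map, but the uniqueness you need is of the type-$S$ subface inside the type-$T$ witness; once that is stated, injectivity is immediate and no ordering, covering-pair reduction, or transversal argument is needed.
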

We prove a related proposition before giving a proof of the theorem.
Fix a balanced relative simplicial complex $\Phi$ of dimension $d$.
Given $\alpha \models d+2$, we let $V_{\alpha}(\Phi)$ be the span of $F_{\alpha}(\Phi)$, which is a $\mathfrak{G}$-module.
Given $\alpha \leq \beta \models d+2$, we define maps $\theta_{\alpha, \beta}: V_{\alpha}(\Phi) \to V_{\beta}(\Phi)$.
Given a face $\sigma \in \Phi$ of type $\alpha$, we let \begin{equation} \theta_{\alpha, \beta}(\sigma) = \sum\limits_{\tau \in F_{\beta}(\Phi): \sigma \subseteq \tau} \tau. \label{eq:theta} \end{equation}
\begin{proposition}
Let $\Phi$ be a balanced relative simplicial complex of dimension $d$, and let $\mathfrak{G} \subseteq \Aut(\Phi)$. Let $\alpha \leq \beta \leq \gamma $ be integer compositions of $|N|$. Then $\theta_{\alpha, \beta}$ is an injective $\mathfrak{G}$-invariant map. 
\label{prop:thetaprops}
\end{proposition}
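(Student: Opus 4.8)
The plan is to check the two assertions separately---$\mathfrak{G}$-equivariance and injectivity---with a short combinatorial ``restriction lemma'' doing the real work. Throughout I identify an integer composition $\alpha$ with the subset $S_\alpha$ of its partial sums, so that $\alpha \le \beta$ means $S_\alpha \subseteq S_\beta$, and I use the third defining condition of a balanced relative simplicial complex: for $\sigma \in \Phi$ the map $\kappa|_\sigma$ is a bijection of $\sigma$ onto $\kappa(\sigma)$; in particular $\sigma$ has type $\alpha$ exactly when $\kappa(\sigma) = S_\alpha$. If $F_\alpha(\Phi) = \emptyset$ there is nothing to prove, so assume otherwise.

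Equivariance is immediate. Fix $\mathfrak{g} \in \mathfrak{G} \subseteq \Aut(\Phi)$. Since $\mathfrak{g}$ preserves $\kappa$ colour by colour and preserves $\Phi$, it permutes $F_\alpha(\Phi)$ and permutes $F_\beta(\Phi)$, and $\sigma \subseteq \tau$ iff $\mathfrak{g}\sigma \subseteq \mathfrak{g}\tau$. Re-indexing the sum in \eqref{eq:theta} by $\tau \mapsto \mathfrak{g}\tau$ gives $\theta_{\alpha,\beta}(\mathfrak{g}\sigma) = \mathfrak{g}\,\theta_{\alpha,\beta}(\sigma)$ on generators, so $\theta_{\alpha,\beta}$ is a morphism of $\mathfrak{G}$-modules.

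For injectivity I will prove the \emph{restriction lemma}: if $\alpha \le \beta$, then for every $\tau \in F_\beta(\Phi)$ the set $\sigma_\tau := \{v \in \tau : \kappa(v) \in S_\alpha\}$ is the \emph{unique} face in $F_\alpha(\Phi)$ contained in $\tau$ --- uniqueness because any $\sigma \in F_\alpha(\Phi)$ with $\sigma \subseteq \tau$ satisfies $\kappa(\sigma) = S_\alpha$ and $\kappa|_\tau$ is injective, forcing $\sigma = \sigma_\tau$; and membership $\sigma_\tau \in \Phi$ holds whenever some face of $\Phi$ lies below it. Dually, every $\sigma \in F_\alpha(\Phi)$ lies in at least one $\tau \in F_\beta(\Phi)$: by the second defining condition extend $\sigma$ to a facet $F \in \Phi$ with $|F| = d+1$, so $\kappa(F) = [d+1] \supseteq S_\beta$; put $\tau := \{v \in F : \kappa(v) \in S_\beta\}$; then $\sigma \subseteq \tau \subseteq F$ with $\sigma, F \in \Phi$, so $\tau \in \Phi$ by the first defining condition, and $\kappa(\tau) = S_\beta$, i.e.\ $\tau \in F_\beta(\Phi)$. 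Now read off the incidence: in the basis $F_\beta(\Phi)$ of $V_\beta(\Phi)$, the vector $\theta_{\alpha,\beta}(\sigma)$ is the $\sigma$-row of the $F_\alpha(\Phi) \times F_\beta(\Phi)$ containment matrix $M$; each column of $M$ has at most one $1$ (at $\sigma_\tau$, by the lemma) and each row has at least one $1$ (by the dual statement). Hence if $\sum_\sigma c_\sigma\,\theta_{\alpha,\beta}(\sigma) = 0$, then comparing the coefficient of a basis vector $\tau$ lying above some face of $F_\alpha(\Phi)$ gives $c_{\sigma_\tau} = 0$; choosing for each $\sigma$ an extension $\tau \supseteq \sigma$ as above kills every $c_\sigma$, so $\theta_{\alpha,\beta}$ is injective.

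The hypothesis $\alpha \le \beta \le \gamma$ suggests one also wants the transitivity $\theta_{\beta,\gamma} \circ \theta_{\alpha,\beta} = \theta_{\alpha,\gamma}$, which falls out of the same lemma: for $\sigma$ of type $\alpha$ and a type-$\gamma$ face $\tau'$ with $\sigma \subseteq \tau'$ there is exactly one type-$\beta$ face $\tau$ with $\sigma \subseteq \tau \subseteq \tau'$ --- the colour-$S_\beta$ restriction of $\tau'$, which lies in $\Phi$ by the first defining condition and contains $\sigma$ since $S_\alpha \subseteq S_\beta$ --- so the double sum collapses to $\theta_{\alpha,\gamma}(\sigma)$. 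The only genuine obstacle is the restriction lemma, and inside it the single delicate point is confirming that the truncated sets $\{v \in \tau : \kappa(v) \in S_\alpha\}$ and $\{v \in F : \kappa(v) \in S_\beta\}$ actually lie in the \emph{relative} complex $\Phi$; this is precisely what the ``sandwich'' axiom and the facet-extension axiom are designed to supply, and once that is secured everything else is bookkeeping with a $0/1$ matrix.
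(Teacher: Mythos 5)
Your proof is correct and follows essentially the same route as the paper: extend a type-$\alpha$ face to a facet, restrict to the colors in $S_\beta$ to get a type-$\beta$ face of $\Phi$ via the sandwich axiom, and use injectivity of $\kappa$ on faces to see that this $\tau$ has a unique type-$\alpha$ face below it, which forces each kernel coefficient to vanish. Your incidence-matrix packaging and the explicit equivariance check are just more detailed bookkeeping of the same argument, so nothing further is needed.
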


Let us fix some notation. Let $\sigma \in \Phi$, and $S = \kappa(\sigma)$. For $T \subseteq S$, we define $\sigma|_T = \{v \in \sigma: \kappa(v) \in T \}$.
\begin{proof}
We see that $\theta_{\alpha, \beta}$ is $\mathfrak{G}$-invariant. To see that the map is injective, let $\vec{v} \in V_{\alpha}(\Phi)$ such that $\varphi_{\alpha, \beta}(\vec{v}) = \vec{0}$. Consider $\sigma \in F_{\alpha}(\Phi)$. Since $\Phi$ is a balanced relative simplicial complex, there exists a facet $\gamma$ such that $\sigma \subseteq \gamma$. Let $\tau \subseteq \gamma$ be such that $\alpha(\kappa(\tau)) = \beta$. We see that $\sigma \subseteq \tau \subseteq \gamma$. Since $\Phi$ is a relative simplicial complex, then $\tau \in F_{\beta}(\Phi)$. By definition, we see that \[[\tau]\theta_{\alpha, \beta}(\vec{v}) = \sum_{\pi \in F_{\alpha}(\Phi): \pi \subseteq \tau} [\pi] \vec{v}.\]
Let $S = \kappa(\sigma)$. Then $\alpha(S) = \alpha$. Moreover, if $\pi \in F_{\alpha}(\Phi)$ such that $\pi \subseteq \tau$, then $\pi = \tau|_S = \sigma$. Thus $[\tau] \varphi_{\alpha, \beta}(\vec{v}) = [\sigma] \vec{v}$. Since $[\tau]\varphi_{\alpha,\beta}(\vec{v}) = 0$, it follows that $[\sigma] \vec{v} = 0$. Since $\sigma \in F_{\alpha}(\Phi)$ was an arbitrary element of $F_{\alpha}(\Phi)$, and $F_{\alpha}(\Phi)$ is the basis of $V_{\alpha}(\Phi)$, it follows that $\vec{v} = \vec{0}$, and $\varphi_{\alpha, \beta}$ is injective.

\end{proof}

\begin{proof}[Proof of Theorem \ref{thm:increasing}]
For the first part, let $\alpha \models d+2$. Given $\mathfrak{g} \in \mathfrak{G}$, we see that $[M_{\alpha}]\Hilb(\Phi, \mathfrak{G}, \mathbf{x}; \mathfrak{g}) = |\Fix_{\mathfrak{g}}(F_{\alpha}(\Phi))|$. If we let $\chi_{\alpha}$ be the character that comes from the action of $\mathfrak{G}$ on $V_{\alpha}(\Phi)$, then we see that $[M_{\alpha}]\Hilb(\Phi, \mathfrak{G}, \mathbf{x}) = \chi_{\alpha}$. Thus $\Hilb(\Phi, \mathfrak{G}, \mathbf{x})$ is a quasisymmetric class function.

Let $\alpha \leq \beta \models d+2$. Since $\varphi_{\alpha, \beta}$ is injective and $\mathfrak{G}$-invariant, we see that $F_{\alpha}(\Phi)$ is isomorphic to a $\mathfrak{G}$-submodule of $V_{\beta}(\Phi)$. It follows that $\Hilb(\Phi, \mathfrak{G}, \mathbf{x})$ is $M$-increasing. 

\end{proof}

\subsection{The Coxeter Complex of type A}

In this section, we discuss the Coxeter complex of type $A$, since many balanced relative simplicial complexes we are interested in are subcomplexes of the Coxeter complex of type $A$. Our goal is to review the fact that faces of the Coxeter complex can be indexed by set compositions and by flags of subsets, and there is a $\mathfrak{S}_N$-invariant correspondence between both collections of indexing sets. 

Given a finite set $N$, let $V(N)$ be the collection of proper subsets of $N$. Then we define the Coxeter complex of the type $A$, $\Sigma_N$, as follows: 
\[\Sigma_N = \{ \{F_1, F_2, \ldots, F_k \}: \emptyset \subset F_1 \subset F_2 \subset \cdots \subset F_k \subset N \} \]
The faces of $\Sigma_N$ are \emph{flags} of proper subsets of $N$. We denote the faces $\{F_1, \ldots, F_k \}$ by $F_{\bdot}$ and given a flag $F_{\bdot} = F_1 \subset F_2 \subset \cdots \subset F_k$, we write $\ell(F_{\bdot}) = k+1$. We also define $\kappa(F) = \{|F_1|, \ldots, |F_k| \}$. Given $\mathfrak{g} \in \mathfrak{S}_N$, we let $\mathfrak{g}F_{\bdot} = \mathfrak{g}F_1 \subset \mathfrak{g}F_2 \subset \cdots \subset \mathfrak{g} F_k$. Thus we see that $\mathfrak{S}_N$ acts on $\Sigma_N$. We also see that $\kappa(\mathfrak{g}F_{\bdot}) = \kappa(F_{\bdot})$.

To every set composition $C \models N$, there is an associated flag $F(C) = \{F_1, \ldots, F_{\ell(C) - 1} \}$. We define $F_i = \bigcup_{j=1}^i C_j$. Note that $F(C) \in \Sigma_N$. Similarly, if $F_{\bdot} \in \Sigma_N$, and $\ell(F_{\bdot}) = k$, then there is an associated set composition $C(F)$, defined by:
\begin{enumerate}
    \item $C_1 = F_1$,
    \item $C_i = F_i \setminus F_{i-1}$ for $2 \leq i \leq k-1$, and
    \item $C_k = N \setminus F_{k-1}$.
\end{enumerate}

For example, to the set composition $13|2|45$, the associated edge in $\Sigma_{\{1,2,3,4,5 \}}$ is $\{1,3 \} \subset \{1,2,3 \}$. Hence, we can denote faces of the Coxeter complex by flags of subsets or by set compositions.

Now we show that the functions $F$ and $C$ are $\mathfrak{S}_N$-invariant.
Let $\mathfrak{g} \in \mathfrak{S}_N.$ Let $C \models N$. Then $F(\mathfrak{g}C)_i = \bigcup_{j=1}^i \mathfrak{g}(C_j) = \mathfrak{g} \left( \bigcup_{j=1}^i C_j \right) = \mathfrak{g}F(C)_i$. Thus $F(\mathfrak{g}C) = \mathfrak{g}F(C)$.

Now fix $F_{\bdot} \in \Sigma_N$ of length $k$. We see that $(\mathfrak{g}C(F_{\bdot}))_1 = \mathfrak{g}F_1 = \mathfrak{g}(C(F)_1)$. For $1 < i \leq k$, we see that $(\mathfrak{g}C(F_{\bdot}))_i = \mathfrak{g}(F_i \setminus F_{i-1}) = \mathfrak{g}(F_i) \setminus \mathfrak{g}(F_{i-1}) = \mathfrak{g}(C(F_{\bdot})_i)$. 
Finally, $(\mathfrak{g}(C(F_{\bdot}))_{k+1} = \mathfrak{g}(N \setminus F_k) = N \setminus \mathfrak{g}(F_k) = \mathfrak{g}(C(F_{\bdot})_{k+1})$. Thus $\mathfrak{g}C(F_{\bdot}) = C(\mathfrak{g}F_{\bdot})$.

\begin{proposition}
\label{prop:convex}

Let $N$ be a finite set. Then $C: \Sigma_N \to \spe{C}_N$ and $F: \spe{C}_N \to \Sigma_N$ are $\mathfrak{S}_N$-invariant bijections. 
\end{proposition}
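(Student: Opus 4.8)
The plan is as follows. The $\mathfrak{S}_N$-equivariance of both $F$ and $C$ has already been checked in the discussion immediately preceding the statement, so what remains is to verify that $F$ and $C$ are well-defined maps between $\spe{C}_N$ and $\Sigma_N$ and that they are mutually inverse; the equivariance will then automatically transfer to the inverse map.

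First I would confirm that $F$ lands in $\Sigma_N$: given $C = C_1|\cdots|C_k \models N$, the sets $F_i = C_1 \cup \cdots \cup C_i$ form a strictly increasing chain because each block $C_j$ is nonempty, with $F_1 = C_1 \neq \emptyset$ and $F_{k-1} = N \setminus C_k$ a proper subset of $N$ since $C_k \neq \emptyset$. Thus $F(C) = \{F_1, \ldots, F_{k-1}\}$ is a flag of nonempty proper subsets, i.e.\ an element of $\Sigma_N$. Symmetrically, for a flag $\emptyset \subset F_1 \subset \cdots \subset F_m \subset N$ in $\Sigma_N$, strictness of the inclusions forces the blocks $C_1 = F_1$, $C_i = F_i \setminus F_{i-1}$ (for $2 \le i \le m$), and $C_{m+1} = N \setminus F_m$ all to be nonempty; they are pairwise disjoint with union $N$, so $C(F_{\bdot}) \models N$. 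This step also pins down the length bookkeeping: a flag of $m$ subsets corresponds to a set composition with $m+1$ blocks, so $F$ and $C$ are length-compatible.

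Next I would verify that the two maps are mutually inverse, by a direct telescoping computation. Starting from $C \models N$ and applying $C \circ F$: the first block is $F(C)_1 = C_1$, each intermediate block is $F(C)_i \setminus F(C)_{i-1} = C_i$, and the last block is $N \setminus F(C)_{k-1} = N \setminus (C_1 \cup \cdots \cup C_{k-1}) = C_k$; hence $C(F(C)) = C$. Conversely, starting from a flag $F_{\bdot}$ and forming the partial unions of $C(F_{\bdot})$, one gets $C_1 \cup \cdots \cup C_i = F_1 \cup (F_2 \setminus F_1) \cup \cdots \cup (F_i \setminus F_{i-1}) = F_i$ for every $i$, so $F(C(F_{\bdot})) = F_{\bdot}$. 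Together with the equivariance already established, this proves the proposition.

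I do not anticipate any genuine obstacle — the argument is elementary bookkeeping. The only points requiring care are keeping the two length conventions aligned (so that $F$ and $C$ are honestly inverse rather than merely ``almost'' inverse) and handling the extreme blocks $C_1$ and $C_k$ separately from the generic block $C_i = F_i \setminus F_{i-1}$, since those are governed by the endpoints $\emptyset$ and $N$ of the flag rather than by an adjacent pair of its subsets.
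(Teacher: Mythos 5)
Your proof is correct and follows essentially the same route as the paper: the equivariance computations are exactly those carried out in the text immediately before the proposition, and the paper leaves the remaining content (well-definedness and the fact that $F$ and $C$ are mutually inverse, with the length conventions matching) as the elementary telescoping check you spell out. Nothing is missing.
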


\section{Geometric Realization}
\label{sec:realization}

We discuss how to associate balanced relative simplicial complexes to $\spe{H}$-structures. This association is only defined for linearized combinatorial Hopf monoids that have a \emph{balanced convex character}.
This construction was investigated in \cite{white-1}.

\begin{definition}
Let $(\spe{H},\varphi)$ be a linearized combinatorial Hopf monoid. Let $N$ be a finite set, and let $V$ be the collection of all proper subsets of $N$, and define $\kappa: V \to [|N|]$ by $\kappa(S) = |S|$. Given $\spe{h} \in \spe{H}[N]$, define the $\varphi$-coloring complex of $\spe{h}$ to be the subcomplex of $\Sigma_N$ given by 
\begin{align*} \Sigma_{\varphi}(\spe{h}) & = \{ F_{\bdot}: \varphi_{C(F_{\bdot})} (\spe{h}) = 1 \} \\ 
& = \{\{F_1, \ldots, F_k \}: F_1 \subset \cdots \subset F_k \mbox{ and } \prod_{i=1}^{k+1}\varphi(\spe{h}|_{F_i} / F_{i-1}) = 1 \}.\end{align*}
\label{def:coloringcomplex}
\end{definition}
The following Theorem was proven in \cite{white-1}:
\begin{theorem}
Let $(\spe{H},\varphi)$ be a linearized combinatorial Hopf monoid. Suppose that $\varphi$ is a balanced convex character. Let $N$ be a finite set, and let $V$ be the collection of all proper subsets of $N$, and define $\kappa: V \to [|N|]$ by $\kappa(S) = |S|$. Given $\spe{h} \in \spe{H}[N]$, let $\Sigma_{\varphi}(\spe{h})$ be the $\varphi$-coloring complex. Then $\Sigma_{\varphi}(\spe{h})$ is a balanced relative simplicial complex.
\label{lem:defsigma}
\end{theorem}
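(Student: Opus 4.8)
The plan is to verify directly the three conditions defining a balanced relative simplicial complex, working throughout with the $\mathfrak{S}_N$-equivariant identification of faces of $\Sigma_N$ with set compositions of $N$ from Proposition~\ref{prop:convex}. Under this dictionary a face $\rho = F_{\bdot}$ corresponds to $C(\rho) \models N$, face containment $\rho \subseteq \tau$ becomes the refinement relation $C(\rho) \leq C(\tau)$, and $\rho \in \Sigma_{\varphi}(\spe{h})$ means precisely $\varphi_{C(\rho)}(\spe{h}) = 1$, i.e. that $\Delta_{C(\rho)}(\spe{h}) \neq 0$ and every tensor factor of $\Delta_{C(\rho)}(\spe{h})$ has $\varphi$-value $1$. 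The balanced condition is then immediate and character-free: a flag $F_1 \subset \cdots \subset F_k$ satisfies $|F_1| < \cdots < |F_k|$, so $\kappa(F_{\bdot}) = \{|F_1|, \dots, |F_k|\}$ has exactly $|F_{\bdot}|$ elements, regardless of $\varphi$.

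The two substantive conditions rest on two lemmas. First: if $\spe{g} \in \spe{H}[M]$ has $\varphi(\spe{g}) = 1$ and $E \models M$ satisfies $\Delta_E(\spe{g}) \neq 0$, then every tensor factor of $\Delta_E(\spe{g})$ has $\varphi$-value $1$. I would prove this by induction on $\ell(E)$: writing $E = E_1 \mid E'$, the very definition of $\Delta_E$ factors it through $\Delta_{E_1, M\setminus E_1}$, so $\Delta_{E_1, M\setminus E_1}(\spe{g}) \neq 0$; condition~(3) of Definition~\ref{def:convex} then yields $\varphi(\spe{g}|_{E_1}) = \varphi(\spe{g}/E_1) = 1$, and the inductive hypothesis applied to $\spe{g}/E_1$ with $E'$ handles the remaining factors. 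Second: every $\spe{g} \in \spe{H}[M]$ admits a set composition $E$ of $M$ into singletons with $\varphi_E(\spe{g}) = 1$; here one inducts on $|M|$, using condition~(2) to obtain a split $M = S \sqcup (M \setminus S)$ with $\Delta_{S, M\setminus S}(\spe{g}) \neq 0$, recursing on $\spe{g}|_S$ and $\spe{g}/S$, concatenating the resulting singleton compositions, and noting that the blocks of the concatenation are singletons (hence $\varphi$-proper by condition~(1)) while $\Delta_E(\spe{g}) \neq 0$ by coassociativity.

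For condition~(1) (that $\Sigma_{\varphi}(\spe{h})$ is a relative simplicial complex), suppose $C \leq C' \leq C''$ with $\varphi_C(\spe{h}) = \varphi_{C''}(\spe{h}) = 1$; I must show $\varphi_{C'}(\spe{h}) = 1$. Because $\Delta_{C''}$ factors through $\Delta_{C'}$ by coassociativity (as $C' \leq C''$) and $\Delta_{C''}(\spe{h}) \neq 0$, we get $\Delta_{C'}(\spe{h}) \neq 0$. Writing $\Delta_C(\spe{h}) = \spe{g}_1 \otimes \cdots \otimes \spe{g}_p$ with every $\varphi(\spe{g}_i) = 1$, coassociativity gives $\Delta_{C'}(\spe{h}) = \bigotimes_i \Delta_{E^{(i)}}(\spe{g}_i)$, where $E^{(i)}$ is the set composition of the block $C_i$ formed by the blocks of $C'$ contained in $C_i$; nonvanishing of the left-hand side forces each $\Delta_{E^{(i)}}(\spe{g}_i) \neq 0$, so the first lemma makes every factor of $\Delta_{C'}(\spe{h})$ $\varphi$-proper, i.e. $\varphi_{C'}(\spe{h}) = 1$. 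For condition~(2) (purity), given a face $\rho$ with $C = C(\rho)$ and $\Delta_C(\spe{h}) = \spe{g}_1 \otimes \cdots \otimes \spe{g}_p$, apply the second lemma to each $\spe{g}_i$ to refine $C_i$ into singletons while keeping all pieces $\varphi$-proper; the resulting all-singletons composition refines $C$, is $\varphi$-proper, and corresponds to a facet (a flag with $|N|-1 = d+1$ vertices) containing $\rho$. Applying the second lemma to $\spe{h}$ itself shows $\Sigma_{\varphi}(\spe{h}) \neq \emptyset$, so the complex is a genuine (nonempty) collection.

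The step I expect to be the main obstacle is the coassociativity bookkeeping: identifying $\Delta_{C'}$, for $C \leq C'$, as the composite of $\Delta_C$ with the iterated coproducts on the individual blocks of $C$, and tracking carefully how the nonvanishing of a big iterated coproduct propagates to and from its tensor factors. Conceptually, the key observation to keep in view is that condition~(2) of a balanced convex character is exactly what supplies purity and nonemptiness, while condition~(3) is exactly what drives the "a $\varphi$-proper structure decomposes only into $\varphi$-proper structures" lemma; condition~(1) is needed only to see that singleton blocks are always $\varphi$-proper.
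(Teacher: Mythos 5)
Your proof is correct. Note that the paper does not actually prove this statement --- it imports it from \cite{white-1} (``The following Theorem was proven in \cite{white-1}'') --- so there is no in-paper argument to compare against; your write-up supplies the missing details in what is surely the intended way. The two lemmas you isolate are exactly the right ones: condition (3) of Definition \ref{def:convex}, iterated along the factorization $\Delta_E = (\id_{E_1}\otimes \Delta_{E/E_1})\circ\Delta_{E_1, M\setminus E_1}$, gives that a $\varphi$-proper structure decomposes only into $\varphi$-proper factors, and conditions (1)--(2) give, by induction on $|M|$, a $\varphi$-proper all-singleton refinement; convexity then correctly uses both ends ($\varphi_{C}(\spe{h})=1$ for properness of the factors of $\Delta_{C'}(\spe{h})$ via your first lemma, and $\varphi_{C''}(\spe{h})=1$ for the nonvanishing $\Delta_{C'}(\spe{h})\neq 0$, since $\Delta_{C''}$ factors through $\Delta_{C'}$), while purity and nonemptiness follow from the second lemma applied blockwise. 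The balancedness of $\Sigma_{\varphi}(\spe{h})$ is, as you say, immediate from the strict inclusions in a flag, and your identification of faces with set compositions matches Proposition \ref{prop:convex}. I see no gap.
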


\begin{example}
Let $\spe{G}$ be the linearized combinatorial Hopf monoid of graphs. Then the character $\chi$ is balanced convex. Then the corresponding relative simplicial complex $\Sigma_{\chi}(\spe{g})$ consists of chains $S_1 \subset S_2 \subset \cdots \subset S_k \subset N$ such that $S_i \setminus S_{i-1}$ is an independent set for all $i$. One presentation for $\Phi(\spe{g})$ is as the pair $(\Sigma, \Gamma(\spe{g}))$, where $\Sigma$ is the Coxeter complex of type $A$, and $\Gamma(\spe{g})$ is collection of flags $S_1 \subset S_2 \subset \cdots \subset S_k \subset N$ where $S_{i+1} \setminus S_i$ must contain an edge for some $i$. The subcomplex $\Gamma(\spe{g})$ is the coloring complex, as introduced by Steingr\'imsson \cite{steingrimsson}.
\end{example}
\begin{example}
As another example, let $\spe{P}$ be the linearized Hopf monoid of posets. Let $\spe{p}$ be the poset in Figure \ref{fig:poset}. Then the $\chi$-coloring complex $\Sigma_{\chi}(\spe{p})$ is the balanced relative simplicial complex in Figure \ref{fig:coloringcomplex}.
\end{example}

\begin{theorem}
Let $(\spe{H},\varphi)$ be a linearized combinatorial Hopf monoid. Suppose that $\varphi$ is a balanced convex character. Let $N$ be a finite set.
Given $\spe{h} \in \spe{H}[N]$, let $\mathfrak{G} \subseteq \Aut(\spe{h})$. Then $\mathfrak{G} \subseteq \Aut(\Sigma_{\varphi}(\spe{h}))$, and
\[\Psi_{\spe{H}, \varphi}(\spe{h}, \mathfrak{G}, \mathbf{x}) = \Hilb(\Sigma_{\varphi}(\spe{h}), \mathfrak{G}, \mathbf{x}). \] 
\label{thm:autgeometry}
\end{theorem}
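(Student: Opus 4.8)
The plan is to run everything through the $\mathfrak{S}_N$-equivariant dictionary between faces of the Coxeter complex and set compositions supplied by Proposition~\ref{prop:convex}, and then to match the two class functions term by term. First I would observe that the bijection $C\colon\Sigma_N\to\spe{C}_N$ restricts to a bijection $\Sigma_{\varphi}(\spe{h})\to\mathcal{C}_{\varphi}(\spe{h})$: by Definition~\ref{def:coloringcomplex} a flag $F_{\bdot}$ lies in $\Sigma_{\varphi}(\spe{h})$ exactly when $\varphi_{C(F_{\bdot})}(\spe{h})=1$, i.e.\ exactly when $C(F_{\bdot})\in\mathcal{C}_{\varphi}(\spe{h})$. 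This bijection intertwines the $\mathfrak{S}_N$-actions, since $C(\mathfrak{g}F_{\bdot})=\mathfrak{g}C(F_{\bdot})$ for all $\mathfrak{g}\in\mathfrak{S}_N$ (established in the discussion preceding Proposition~\ref{prop:convex}), and it matches the gradings: if $F_{\bdot}=F_1\subset\cdots\subset F_k$, then both $\kappa(F_{\bdot})$ and $C(F_{\bdot})$ encode the integer composition $(|F_1|,\,|F_2|-|F_1|,\,\ldots,\,|F_k|-|F_{k-1}|,\,|N|-|F_k|)$ of $|N|$, so that $\alpha(\kappa(F_{\bdot}))=\alpha(C(F_{\bdot}))$. (By Theorem~\ref{lem:defsigma} the complex $\Sigma_{\varphi}(\spe{h})$ is a balanced relative simplicial complex with $\kappa$ valued in $[|N|-1]$, hence of dimension $|N|-2$, so its flag quasisymmetric function has degree $|N|$, matching the degree of $\Psi_{\spe{H},\varphi}(\spe{h},\mathbf{x})$.)

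Next I would deduce $\mathfrak{G}\subseteq\Aut(\Sigma_{\varphi}(\spe{h}))$. An automorphism of a balanced relative simplicial complex must preserve $\kappa$ and preserve the set of faces. Every vertex of $\Sigma_{\varphi}(\spe{h})$ is a proper subset $S\subset N$ with $\kappa(S)=|S|$, and a permutation preserves cardinalities, so the first requirement is automatic for each $\mathfrak{g}\in\mathfrak{G}$ acting by $S\mapsto\mathfrak{g}(S)$. For the second, recall that $\mathcal{C}_{\varphi}(\spe{h})$ is a $\mathfrak{G}$-set (noted just before Theorem~\ref{thm:coloring}); transporting this fact through the bijection of the previous paragraph shows that $\Sigma_{\varphi}(\spe{h})$ is $\mathfrak{G}$-stable, whence $\mathfrak{G}\subseteq\Aut(\Sigma_{\varphi}(\spe{h}))$. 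The one point here that genuinely needs justification is that $\mathcal{C}_{\varphi}(\spe{h})$ is a $\mathfrak{G}$-set, i.e.\ that $\varphi_{\mathfrak{g}C}(\spe{h})=\varphi_C(\spe{h})$ for $\mathfrak{g}\in\Aut(\spe{h})$; I would obtain this by unwinding $\varphi_C=\bigl(\prod_i\varphi_{C_i}\bigr)\circ\Delta_C$ and applying, blockwise, the naturality of each $\Delta_{S,T}$ and of the character $\varphi$ together with the hypothesis $\spe{H}[\mathfrak{g}](\spe{h})=\spe{h}$. This is really the only obstacle in the argument; everything else is bookkeeping with the Coxeter-complex correspondence.

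Finally I would compare the two class functions pointwise. Fix $\mathfrak{g}\in\mathfrak{G}$. Since the bijection $C\leftrightarrow F_{\bdot}$ intertwines the $\mathfrak{g}$-action, it restricts to a bijection between $\{F_{\bdot}\in\Sigma_{\varphi}(\spe{h}):\mathfrak{g}F_{\bdot}=F_{\bdot}\}$ and $\{C\in\mathcal{C}_{\varphi}(\spe{h}):\mathfrak{g}C=C\}$ carrying $M_{\alpha(\kappa(F_{\bdot}))}$ to $M_{\alpha(C(F_{\bdot}))}$, so
\begin{align*}
\Hilb(\Sigma_{\varphi}(\spe{h}),\mathfrak{G},\mathbf{x};\mathfrak{g})
&= \sum_{F_{\bdot}\in\Sigma_{\varphi}(\spe{h}):\,\mathfrak{g}F_{\bdot}=F_{\bdot}} M_{\alpha(\kappa(F_{\bdot}))} \\
&= \sum_{C\in\mathcal{C}_{\varphi}(\spe{h}):\,\mathfrak{g}C=C} M_{\alpha(C)}
= \Psi_{\spe{H},\varphi}(\spe{h},\mathfrak{G},\mathbf{x};\mathfrak{g}),
\end{align*}
the last equality being~\eqref{eq:fundamental} of Theorem~\ref{thm:coloring}. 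As $\mathfrak{g}\in\mathfrak{G}$ was arbitrary, the two $\qsym$-valued class functions coincide, which completes the proof.
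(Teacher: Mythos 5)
Your proof is correct and follows essentially the same route as the paper's: transport everything through the $\mathfrak{S}_N$-equivariant bijection of Proposition~\ref{prop:convex}, deduce $\mathfrak{G}\subseteq\Aut(\Sigma_{\varphi}(\spe{h}))$ from the $\mathfrak{G}$-action on $\mathcal{C}_{\varphi}(\spe{h})$, and match the class functions pointwise via Equation~\eqref{eq:fundamental}. The only difference is cosmetic: you sketch a justification (naturality of $\Delta$ and $\varphi$ applied blockwise) for the fact that $\mathcal{C}_{\varphi}(\spe{h})$ is a $\mathfrak{G}$-set, which the paper simply asserts before Theorem~\ref{thm:coloring} and reuses.
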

\begin{proof}
Let $N$ be a finite set, $\spe{h} \in \spe{H}[N]$, and $\mathfrak{G} \subseteq \Aut(\spe{h})$. Let $\mathfrak{g} \in \Aut(\spe{h})$ and $F_{\bdot} \in \Sigma_{\varphi}(\spe{h})$. Then $C(F_{\bdot}) \in \mathcal{C}_{\varphi}(\spe{h})$. In particular, $C(F_{\bdot}) \in \mathcal{C}_{\varphi, \alpha}(\spe{h})$ for some $\alpha$. 
Since $\mathfrak{G}$ acts on $\mathcal{C}_{\varphi, \alpha}(\spe{h})$, we see that $\mathfrak{g} C(F_{\bdot}) \in \mathcal{C}_{\varphi}(\spe{h})$. By Proposition \ref{prop:convex}, it follows that $\mathfrak{g}F_{\bdot} \in \Sigma_{\varphi}(\spe{h})$. Since $\mathfrak{g}C(F_{\bdot}) \in \mathcal{C}_{\varphi, \alpha}(\spe{h})$, we see that $\alpha(\kappa(\mathfrak{g}F_{\bdot})) = \alpha$. Thus $\mathfrak{G} \subseteq \Aut(\Sigma_{\varphi}(\spe{h})).$

By Equation \eqref{eq:fundamental}, we know that \[\Psi_{\spe{H}, \varphi}(\spe{h}, \mathfrak{G}, \mathbf{x}; \mathfrak{g}) = \sum_{C \in \mathcal{C}_{\varphi}(\spe{h}): \mathfrak{g}C = C} M_{\alpha(C)}.\] We also know that $\Sigma_{\varphi}(\spe{h}) = \{F(C): C \in \mathcal{C}_{\varphi}(\spe{h}) \}$, and that $M_{\alpha(C)} = M_{\alpha(F(C))}$. It follows from Proposition \ref{prop:convex} that $\mathfrak{g}C = C$ if and only if $\mathfrak{g}F(C) = F(C)$, so we have 
\[\sum_{C \in \mathcal{C}_{\varphi}(\spe{h}): \mathfrak{g}C = C} M_{\alpha(C)} = \sum_{F \in \Sigma_{\varphi}(\spe{h}): \mathfrak{g}F = F} M_{\alpha(F)}, \]
and the right-hand side is $\Hilb(\Sigma_{\varphi}(\spe{h}), \mathfrak{G}, \mathbf{x}; \mathfrak{g})$. 
\end{proof}

\begin{proof}[Proof of Theorem \ref{thm:bigmain}]
Let $\spe{H}$ be a linearized combinatorial Hopf monoid with balanced convex character $\varphi$. Let $N$ be a finite set, $\spe{h} \in \spe{H}[N]$, and $\mathfrak{G} \subseteq \Aut(\spe{h})$. Then by Theorem \ref{thm:autgeometry}, we know that $\Psi_{\spe{H}, \varphi}(\spe{h}, \mathfrak{G}, \mathbf{x}) = \Hilb(\Sigma_{\varphi}(\spe{h}), \mathfrak{G}, \mathbf{x})$. Since $\Sigma_{\varphi}(\spe{h})$ is a balanced relative simplicial complex, it follows from Theorem \ref{thm:increasing} that $\Psi_{\spe{H}, \varphi}(\spe{h}, \mathfrak{G}, \mathbf{x})$ is $M$-increasing. 

Let $1$ denote the trivial representation.
From Theorem \ref{thm:coloring2}, we see that $\Psi_{\spe{H}, \varphi}^O(\spe{h}, \mathfrak{G}, \mathbf{x}) = \langle 1, \Psi_{\spe{H}, \varphi}(\spe{h}, \mathfrak{G}, \mathbf{x}) \rangle.$ Let $\alpha \leq \beta$. Then by Proposition \ref{prop:global}, we see that $[M_{\alpha}]\Psi_{\spe{H}, \varphi}^O(\spe{h}, \mathfrak{G}, \mathbf{x}) \leq [M_{\beta}]\Psi_{\spe{H}, \varphi}^O(\spe{h}, \mathfrak{G}, \mathbf{x}).$ Thus $\Psi_{\spe{H}, \varphi}^O(\spe{h}, \mathfrak{G}, \mathbf{x})$ is also $M$-increasing.

From Theorem \ref{thm:coloring}, we see that $\Psi_{\spe{H}, \varphi}(\spe{h}, \mathfrak{G}, x) = \ps \Psi_{\spe{H}, \varphi}(\spe{h}, \mathfrak{G}, \mathbf{x})$. Since $\Psi_{\spe{H}, \varphi}(\spe{h}, \mathfrak{G}, \mathbf{x})$ is $M$-increasing, it follows from Proposition \ref{prop:global2} that the $f$-vector of $\Psi_{\spe{H}, \varphi}(\spe{h}, \mathfrak{G}, x)$ is effectively flawless. The fact that $(|N|-i)f_i(\Psi_{\spe{H}, \varphi}(\spe{h}, \mathfrak{G}, x)) \leq if_{i+1}(\Psi_{\spe{H}, \varphi}(\spe{h}, \mathfrak{G}, x))$ follows from Proposition \ref{prop:global3}.

We observe that
\begin{align*}\Psi^O_{\spe{H}, \varphi}(\spe{h}, \mathfrak{G}, x) & = \langle 1, \Psi_{\spe{H}, \varphi}(\spe{h}, \mathfrak{G}, x) \rangle  \\
& = \sum_{i=1}^{|N|} \langle 1, f_i(\Psi_{\spe{H}, \varphi}(\spe{h}, \mathfrak{G}, x)) \rangle \binom{x}{i} \end{align*}
where the first equality comes from Theorem \ref{thm:coloring2} and the second equality comes from Proposition \ref{prop:global}. 
Thus we see that \[f_i(\Psi^O_{\spe{H}, \varphi}(\spe{h}, \mathfrak{G}, x)) = \langle 1, f_i(\Psi_{\spe{H}, \varphi}(\spe{h}, \mathfrak{G}, x)) \rangle. \]

For the last set of inequalities, we need the following fact: if $\psi \leq_{\mathfrak{G}} \theta$, then $\langle 1, \psi \rangle \leq \langle 1, \theta \rangle$.
Suppose that we have constants $a$ and $b$, and $i \leq j$ such that \[af_i(\Psi_{\spe{H}, \varphi}(\spe{h}, \mathfrak{G}, x)) \leq_{\mathfrak{G}} bf_j(\Psi_{\spe{H}, \varphi}(\spe{h}, \mathfrak{G}, x)).\]
Then 
\begin{align*} af_i(\Psi^O_{\spe{H}, \varphi}(\spe{h}, \mathfrak{G}, x)) & = a\langle 1, f_i(\Psi_{\spe{H}, \varphi}(\spe{h}, \mathfrak{G}, x)) \rangle \\
& \leq b \langle 1, f_{j}(\Psi_{\spe{H}, \varphi}(\spe{h}, \mathfrak{G}, x)) \rangle \\
& = b f_j(\Psi^O_{\spe{H}, \varphi}(\spe{h}, \mathfrak{G}, x)).
\end{align*}
Thus  \[af_i(\Psi^O_{\spe{H}, \varphi}(\spe{h}, \mathfrak{G}, x)) \leq bf_j(\Psi^O_{\spe{H}, \varphi}(\spe{h}, \mathfrak{G}, x)).\]
In particular, for $i < |N|/2,$ if we set $a = b = 1$, and $j = i+1$, then we obtain 
\[ f_i(\Psi^O_{\spe{H}, \varphi}(\spe{h}, \mathfrak{G}, x)) \leq f_{i+1}(\Psi^O_{\spe{H}, \varphi}(\spe{h}, \mathfrak{G}, x)).\]
Similarly, if we set $a=b=1$ and $j=|N|-i$, then we obtain 
\[ f_i(\Psi^O_{\spe{H}, \varphi}(\spe{h}, \mathfrak{G}, x)) \leq f_{|N|-i}(\Psi^O_{\spe{H}, \varphi}(\spe{h}, \mathfrak{G}, x)).\]
Finally, for any $i$, if we set $a = |N|-i$, $b = i$, and $j=i+1$, we obtain 
\[ (|N|-i)f_i(\Psi^O_{\spe{H}, \varphi}(\spe{h}, \mathfrak{G}, x)) \leq if_{i+1}(\Psi^O_{\spe{H}, \varphi}(\spe{h}, \mathfrak{G}, x)).\]

\end{proof}




\bibliographystyle{amsplain}  
\bibliography{hopfclass}

\end{document}